\newtheorem{theorem}{Theorem}
\newtheorem{lemma}[theorem]{Lemma}
\newtheorem{remark}[theorem]{Remark}
\newtheorem*{remark*}{Remark}
\theoremstyle{definition}
\newcommand*\nz[1][]{\ensuremath{\mathbb{N}^{#1}}}
\newcommand*\rz[1][]{\ensuremath{\mathbb{R}^{#1}}}
\newcommand*\lrabs[1]{\left|{#1}\right|}
\newcommand*\abs[1]{\lvert{#1}\rvert}
\newcommand*\bigabs[1]{\bigl\lvert{#1}\bigr\rvert}
\newcommand*\Bigabs[1]{\Bigl\lvert{#1}\Bigr\rvert}
\newcommand*\biggabs[1]{\biggl\lvert{#1}\biggr\rvert}
\newcommand*\norm[1]{\lVert{#1}\rVert}
\newcommand*\bignorm[1]{\bigl\lVert{#1}\bigr\rVert}
\newcommand*\widefrac[2]{\frac{\phantom{\lVert}#1\phantom{\lVert}}{\phantom{\lVert}#2\phantom{\lVert}}}
\newcommand*\argplaceholder{\ensuremath{\,\cdot\,}}
\newcommand*\stack[2]{\genfrac{}{}{0pt}{3}{#1}{#2}}
\newcommand*\myunderbrace[3]{\underbrace{{#1}}_{\phantom{{#3}}{#2}{#3}}}
\DeclareMathOperator{\diag}{diag}
\DeclareMathOperator{\sign}{sign}
\let\div\divergence
\DeclareMathOperator{\curl}{curl}
\newcommand*\ball{\ensuremath B}
\newcommand*\nball[1][n]{\ball^{#1}}
\newcommand*\scprod[1]{\langle{#1}\rangle}
\newcommand*\bigscprod[1]{\bigl\langle{#1}\bigr\rangle}
\DeclareMathOperator{\id}{id}
\newlength{\tmpwidth}\newlength{\tmpheight}\newlength{\tmpdepth}
\def\tmplengthsarg#1#2{\mbox{$#1 #2$}}
\def\settmplengths#1#2{%
  \settowidth{\tmpwidth}{\tmplengthsarg{#1}{#2}}%
  \settoheight{\tmpheight}{\tmplengthsarg{#1}{#2}}%
  \settodepth{\tmpdepth}{\tmplengthsarg{#1}{#2}}%
}
\def\settmplengthsdisplaystyle#1{\settmplengths{\displaystyle}{#1}}
\def\settmplengthstextstyle#1{\settmplengths{\textstyle}{#1}}
\def\settmplengthsscriptstyle#1{\settmplengths{\scriptstyle}{#1}}
\def\settmplengthsscriptscriptstyle#1{\settmplengths{\scriptscriptstyle}{#1}}
\def\makepostsup#1#2{{#1}^{#2}\hspace{-\tmpwidth}\phantom{#1}}
\newcommand{\postsup}[2]{%
  \mathchoice%
    {\settmplengthsdisplaystyle{#1}%
      \makepostsup{#1}{#2}}%
    {\settmplengthstextstyle{#1}%
      \makepostsup{#1}{#2}}%
    {\settmplengthsscriptstyle{#1}%
      \makepostsup{#1}{#2}}%
    {\settmplengthsscriptscriptstyle{#1}%
      \makepostsup{#1}{#2}}%
}
\def\makepostsub#1#2{{#1}_{#2}\hspace{-\tmpwidth}\phantom{#1}}
\newcommand{\postsub}[2]{%
  \mathchoice%
    {\settmplengthsdisplaystyle{#1}%
      \makepostsub{#1}{#2}}%
    {\settmplengthstextstyle{#1}%
      \makepostsub{#1}{#2}}%
    {\settmplengthsscriptstyle{#1}%
      \makepostsub{#1}{#2}}%
    {\settmplengthsscriptscriptstyle{#1}%
      \makepostsub{#1}{#2}}%
}
\def\makepresup#1#2{{}\hspace{-\tmpwidth}\phantom{#1}^{#2}\mkern-3.0mu}
\newcommand{\presup}[2]{%
  \mathchoice%
    {\settmplengthsdisplaystyle{#1}%
      \makepresup{#1}{#2}}%
    {\settmplengthstextstyle{#1}%
      \makepresup{#1}{#2}}%
    {\settmplengthsscriptstyle{#1}%
      \makepresup{#1}{#2}}%
    {\settmplengthsscriptscriptstyle{#1}%
      \makepresup{#1}{#2}}%
  #1%
}
\DeclareFontFamily{U}{mathx}{\hyphenchar\font45}
\DeclareFontShape{U}{mathx}{m}{n}{
      <5> <6> <7> <8> <9> <10>
      <10.95> <12> <14.4> <17.28> <20.74> <24.88>
      mathx10
      }{}
\DeclareSymbolFont{mathx}{U}{mathx}{m}{n}
\DeclareMathAccent{\widecheck}{0}{mathx}{"71}
\newcommand{\ulIndex}[3]{#1^#2_{\phantom{#2}#3}}
\newcommand{\ul}[1]{\ensuremath\ulIndex#1}
\newcommand{\cullIndex}[4]{#1^{#2}_{#3#4}}
\newcommand{\cull}[1]{\ensuremath\cullIndex#1}
\newcommand{\biggg}{\bBigg@{3.5}} 
\newcommand{\Biggg}{\bBigg@{4}}
\newcommand{\bigggg}{\bBigg@{4.5}}
\newcommand{\Bigggg}{\bBigg@{5}}
\newcommand{\bigggl}{\mathopen\biggg}
\newcommand{\bigggr}{\mathclose\biggg}
\begin{document}

\def\mytitle{{On the formation of shocks of electromagnetic plane waves in non--linear crystals}}
\def\myshorttitle{\mytitle}

\hypersetup{pdftitle={D. Christodoulou and D. R. Perez, On the formation of shocks of electromagnetic plane waves in non-linear crystals}} 

\thanks{This work was funded by the European Research Council (ERC Advanced Grant 246574, Partial Differential Equations of Classical Physics)}

\expandafter\title[\myshorttitle]\mytitle

\author[D.~Christodoulou]{Demetrios Christodoulou}
\author[D.~R.~Perez]{Daniel Raoul Perez}
\address[Demetrios Christodoulou]{Department Mathematik\\ETH Z\"urich\\R\"amistrasse 101\\CH-8092 Zurich\\Switzerland}
\email{demetrios.christodoulou@math.ethz.ch}
\address[Daniel Raoul Perez]{Department Mathematik\\ETH Z\"urich\\R\"amistrasse 101\\CH-8092 Zurich\\Switzerland}
\email{daniel.perez@math.ethz.ch}

\subjclass[2010]{35L67 (35L60, 35L50, 78A60)}

\begin{abstract}
An influential result of F.~John states that no genuinely non--linear strictly hyperbolic quasi--linear first order system of partial differential equations in two variables has a global $C^2$--solution for small enough initial data. Inspired by recent work of D.~Christodoulou, we revisit John's original proof and extract a more precise description of the behaviour of solutions at the time of shock. We show that John's singular first order quantity, when expressed in characteristic coordinates, remains bounded until the final time, which is then characterised by an inverse density of characteristics tending to zero in one point. Moreover, we study the derivatives of second order, showing again their boundedness when expressed in appropriate coordinates. We also recover John's upper bound for the time of shock formation and complement it with a lower bound. Finally, we apply these results to electromagnetic plane waves in a crystal with no magnetic properties and cubic electric non--linearity in the energy density, assuming no dispersion.
\end{abstract}

\maketitle

\section{Introduction} \label{sec:introduction}

Electromagnetic plane waves of a single polarisation can form shocks in non--linear media \cite[\S111]{landaulifshitz84}. If both polarisations are present, one may appeal to \cite{john74} in the anisotropic case. A solution to the general three--dimensional problem of electromagnetic waves in non--linear media, even in the homogeneous and isotropic setting, is still out of reach. A model case, however, was recently dealt with by Miao and Yu \cite{miaoyu14}.

In this paper, we focus on electromagnetic plane waves in non--linear crystals. These are solutions of certain quasi--linear hyperbolic systems of first order partial differential equations, which fall in the framework of Fritz John's treatment \cite{john74}. However, the description in \cite{john74} is somewhat incomplete, since only the non--existence of smooth global solutions is shown. The purpose of the present work is to overcome this limitation. Its key feature is a more precise description of the behaviour of smooth solutions at their final time of existence. It turns out that their singular behaviour, as mentioned by John, is a consequence of the singular relationship between the physical space--time coordinates and coordinates adapted to the characteristics. Indeed, all relevant quantities remain bounded when expressed in appropriate coordinates adapted to the characteristics, and a shock is characterised by an inverse density of characteristics decreasing to zero somewhere. This is then similar to the situation in \cite{christodoulou07}, where the formation of shocks in relativistic compressible fluids is considered (but see also the more recent and self--contained exposition of the non--relativistic case \cite{christodouloumiao14}). Those shocks are characterised by the foliation into outgoing characteristic null--hypersurfaces becoming infinitely dense somewhere. At the same time, the solution can be smoothly extended to the time of shock when expressed in acoustical coordinates. This description served as an inspiration for the work at hand.

Our results are not limited to the four--by--four system obtained when considering both polarisations of electromagnetic plane waves in non--linear crystals, but they apply to the general framework set up in \cite{john74}, i.e., genuinely non--linear strictly hyperbolic quasi--linear first order systems of partial differential equations in one space variable and one time variable. One calls a system strictly hyperbolic when the characteristic speeds are distinct, whereas such a system is said to be genuinely non--linear, if certain directional derivatives of the characteristic speeds do not vanish. As in \cite{john74}, a key ingredient is the assumption of smooth, compactly supported and \emph{small} non--zero initial data, leading to the long--time behaviour of solutions as essentially simple waves, i.e., maps with one--dimensional range. This is expressed by the fact that the problem reduces to a system of non--linear ordinary differential equations along each characteristic, obtained by an appropriate decomposition of the spatial derivatives of the solution vector. Calling \emph{characteristic strip} the collection of characteristics of a certain speed emanating from the support of the initial data, John's result states that solutions have to remain $C^1$--bounded outside those strips. On the other hand, John shows a blow--up in finite time of a component of the first spatial derivative of a solution inside at least one of those strips (although solutions themselves remain bounded). His argument can be modified slightly to get a deeper insight into the failure of regularity. By expressing the solution in characteristic coordinates rather than the Euclidean ones, the same component of the first spatial derivative remains bounded in every strip. This is due to the fact that, in those coordinates, the spatial derivative corresponds to the partial derivative with respect to Euclidean space multiplied by the inverse density of characteristics of the strip at hand. This inverse density acts as a regularising factor. The final time is then characterised by this quantity going to zero somewhere, which, therefore, implies John's result.

The first part of our work can, in fact, be extracted directly from John's proof. John introduces the inverse density of characteristics, as well as the regularised component of the first order spatial derivative of the solution. However, he uses both those quantities only in an integral sense, thereby hiding their actual behaviour at the final time. This is why we choose to re--expose completely his proof. We also use slightly different assumptions on the initial data. While he requires continuous second derivative, we only need Lipschitz--continuity of the first derivative. Even if this is not an important modification, we feel that it is more natural, as the initial system is only of first order.

In a second step, we extend the above result by considering the derivatives of second order of solutions. It is not hard to see that, again, outside every characteristic strip, all quantities of interest remain bounded. Inside such a strip, however, the situation becomes more complicated, as the quantities relating to the strip at hand depend too strongly on quantities related to other strips. The problem is that we cannot bound certain quantities when integrating along the appropriate characteristics, while inside a characteristic strip of a different speed. This is overcome by expressing the relevant quantities in bi--characteristic coordinates, where time as a parameter for a characteristic is replaced by the spatial characteristic coordinate of a transversally intersecting characteristic of a different speed.

We end this paper with an application of the theory to electromagnetic plane waves inside crystals with no magnetic properties and cubic electric non--linearity in the energy density, neglecting effects of dispersion. One should point out here, though, that physical experiments would always exhibit dispersion which will have the tendency to counteract shock formation. Accepting that a description of the full three--dimensional problem in crystals is still far away, solving the present problem seems like a good complement to a potential three--dimensional theory for isotropic media. Indeed, one would probably have to focus the beam in the latter case in order to counteract dispersion, as the non--linearity is only of fourth order in the Lagrangian.

The present work is organised as follows. In Section~\ref{sec:preliminaries}, we introduce the necessary notation in order to state our result. We explain characteristics and the coordinates related to them. We also expose some first order variations and formulate the technical assumption for hyperbolic systems to be genuinely non--linear. Section~\ref{sec:firstorder} is devoted to a re--exposition of John's proof. We begin by obtaining the evolution equations along characteristics of first order quantities. After presenting some a--priori assumptions on the initial data, we recover John's bounds on those quantities using his method. We end the section by analysing more in depth the behaviour near the final time of the inverse density of characteristics. In doing so, we establish upper and lower bounds on the final time (but see also \cite{hoermander97} for existence, uniqueness and lifespan of solutions). Moreover, we show a lower bound on the inverse density of characteristics in certain regions, including parts of its relative strip. In Section~\ref{sec:secondorder}, we prove boundedness of the second order quantities. Using characteristic coordinates for a particular characteristic strip, the respective quantities also depend on second order quantities related to other characteristic speeds. We deal with the latter by expressing them in bi--characteristic coordinates. It turns out that they form a complete regular system that can thus be bounded. The bounds on the second order quantities related to the strip at hand are then obtained by a Gronwall--type argument for systems. We end this paper with Section~\ref{sec:application}, in which we consider electromagnetic plane waves inside crystals with no magnetic properties and cubic electric non--linearity in the energy density, assuming absence of dispersion. After quickly recalling the relevant equations, we start by exposing under what conditions the theory can actually be applied. We then point out that the simplest two--polarisation case actually decouples and can be dealt with using Riemann invariants. In contrast, as we subsequently show, no Riemann invariants exist for generic cubic non--linear energy densities. As an actual experimental setup would require to deal with a material boundary, we end this section with giving a sketch of that situation.

\section{Preliminaries} \label{sec:preliminaries}

\subsection{The Cauchy problem} \label{subsec:cauchyproblem}

Let $N\in\nz\setminus\{0\}$ and $U\subset\rz[N]$ be open with $0\in U$. Let $K\in\nz\setminus\{0\}$ and $a\in C^{\max\{2,K\}}(U;\rz[N\times N])$ be such that the eigenvalues $\lambda_i$ ($i\in\{1,\ldots,N\}$) of $a$ fulfil $\lambda_N<\ldots<\lambda_1$ in $U$, i.e., we assume that the first order quasi--linear system 
\begin{equation} \label{eq:main_pde_plain}
  \frac{\partial}{\partial t}u+a(u)\frac{\partial}{\partial x}u=0
\end{equation}
is strictly hyperbolic around the trivial solution $u\equiv0$. Take $\delta>0$ small enough so that the open $N$--ball $\nball[N]_{2\delta}(0)$ of radius $2\delta$ around $0\in\rz[N]$ is completely contained in $U$, i.e., 
\begin{equation} \label{eq:2deltaball_in_U}
  \nball[N]_{2\delta}(0)\subset U .
\end{equation}
For each $i\in\{1,\ldots,N\}$, define 
\begin{align} \label{eq:supinflambdai}
  \overline\lambda_i &= \sup_{v\in\nball[N]_{2\delta}(0)} \lambda_i(v) , & \underline\lambda_i &= \inf_{v\in\nball[N]_{2\delta}(0)} \lambda_i(v) ,
\end{align}
and
\begin{equation} \label{eq:sigma}
  \sigma = \min_{\stack{k,l}{k<l}}(\underline\lambda_k - \overline\lambda_l) .
\end{equation}
We will assume $\delta$ to be small enough so that
\begin{equation} \label{eq:sigma_pos}
  \sigma > 0 ,
\end{equation}
which is possible, since $a$ is continuous. This means that the system \eqref{eq:main_pde_plain} is \emph{uniformly} strictly hyperbolic in $\nball[N]_{2\delta}(0)$.

For a general $N$--dimensional vector space $\mathcal V$ endowed with a basis $\{E_i\}_{i\in\{1,\ldots,N\}}$, we denote by $\{\postsup{E}{\star}^i\}_{i\in\{1,\ldots,N\}}$ the basis of the dual space $\mathcal V^\star$ of $\mathcal V$ which is dual to $\{E_i\}_{i\in\{1,\ldots,N\}}$, i.e., we have 
\begin{equation*}
  \postsup{E}{\star}^i E_j = \ul{\delta ij} , \qquad \forall i,j\in\{1,\ldots,N\} .
\end{equation*}
We can then expand any vector $V\in\mathcal V$ and any covector $\eta\in\mathcal V^\star$ in the bases $\{E_i\}_{i\in\{1,\ldots,N\}}$ and $\{\postsup{E}{\star}^i\}_{i\in\{1,\ldots,N\}}$, respectively, as follows: 
\begin{equation*}
  V = \sum_i \bigl( \postsup{E}{\star}^i V \bigr) E_i , \qquad \eta = \sum_i \bigl( \eta E_i \bigr) \postsup{E}{\star}^i .
\end{equation*}
We define the scalar product $\scprod{\argplaceholder,\argplaceholder}_{E}$ on $\mathcal V$ by 
\begin{equation*}
  \scprod{V,W}_{E} = \sum_k \bigl(\postsup{E}{\star}^kV\bigr)\bigl(\postsup{E}{\star}^kW\bigr) , \qquad V,W\in\mathcal V .
\end{equation*}
Then 
\begin{equation*}
  \scprod{E_i,E_j}_{E} = \sum_k \ul{\delta ki}\ul{\delta kj} = \delta_{ij} , \qquad i,j\in\{1,\ldots,N\} , 
\end{equation*}
so that $\{E_i\}_{i\in\{1,\ldots,N\}}$ is an orthonormal basis of $\mathcal V$ with respect to $\scprod{\argplaceholder,\argplaceholder}_{E}$. Let $\iota_E$ denote the canonical isomorphism of $\mathcal V^\star$ onto $\mathcal V$ induced by $\scprod{\argplaceholder,\argplaceholder}_{E}$. It is defined as follows. Given a covector $\eta\in\mathcal V^\star$, the element $\iota_E(\eta)\in\mathcal V$ is the unique vector which fulfils 
\begin{equation*}
  \bigscprod{\iota_E(\eta),V}_E = \eta V ,\qquad \forall V\in\mathcal V .
\end{equation*}
It follows that $\iota_E$ maps $\postsup{E}{\star}^i$ to $E_i$, $i\in\{1,\ldots,N\}$. Also, the isomorphism $\iota_E$ induces a scalar product $\scprod{\argplaceholder,\argplaceholder}_{E^\star}$ on $\mathcal V^\star$, defined by 
\begin{equation*}
  \scprod{\eta,\xi}_{E^\star} = \bigscprod{\iota_E(\eta),\iota_E(\xi)}_{E} , \qquad \eta,\xi\in\mathcal V^\star ,
\end{equation*}
relative to which the basis $\{\postsup{E}{\star}^i\}_{i\in\{1,\ldots,N\}}$ is orthonormal: 
\begin{equation*}
  \scprod{\postsup{E}{\star}^i,\postsup{E}{\star}^j}_{E^\star} = \bigscprod{\iota_E({\postsup{E}{\star}^i}),\iota_E({\postsup{E}{\star}^j})}_{E} = \scprod{E_i,E_j}_{E} = \delta^{ij} , \qquad i,j\in\{1,\ldots,N\} .
\end{equation*}

Returning to the system \eqref{eq:main_pde_plain}, let $\postsub{e}{0}_i=e_i(0)$ denote an eigenvector of $a(0)$ corresponding to the eigenvalue $\lambda_i(0)$, $i\in\{1,\ldots,N\}$. Since all the eigenvalues of $a$ are distinct, $\{\postsub{e}{0}_i\}_{i\in\{1,\ldots,N\}}$ forms a basis of $\rz[N]$. Denoting by $\{\postsup{\postsub{e}{0}}{\star}^i\}_{i\in\{1,\ldots,N\}}$ its dual basis, we define the scalar product $\scprod{\argplaceholder,\argplaceholder}_0$ by
\begin{equation} \label{eq:ei0_scprod}
  \scprod{v,w}_0 = \sum_k \bigl(\postsup{\postsub{e}{0}}{\star}^kv\bigr)\bigl(\postsup{\postsub{e}{0}}{\star}^kw\bigr) , \qquad v,w\in\rz[N] .
\end{equation}
With $\iota_0$ the isomorphism of $\postsup{\rz[N]}{\star}$ onto $\rz[N]$ induced by $\scprod{\argplaceholder,\argplaceholder}_0$, the inner product $\scprod{\argplaceholder,\argplaceholder}_{0^\star}$ induced by $\iota_0$ on $\postsup{\rz[N]}{\star}$ is given by 
\begin{equation*}
  \scprod{\eta,\xi}_{0^\star} = \bigscprod{\iota_0(\eta),\iota_0(\xi)}_0 , \qquad \eta,\xi\in\postsup{\rz[N]}{\star} .
\end{equation*}
We then obtain the induced norms $\norm{\argplaceholder}_0$ and $\norm{\argplaceholder}_{0^\star}$ on $\rz[N]$ and $\postsup{\rz[N]}{\star}$, respectively, from 
\begin{equation} \label{eqs:ei0_norms}
  \norm{v}_0 = \sqrt{\scprod{v,v}_0} , \quad v\in\rz[N] , \qquad\text{ and }\qquad \norm{\eta}_{0^\star} = \sqrt{\scprod{\eta,\eta}_{0^\star}} , \quad \eta\in\postsup{\rz[N]}{\star} .
\end{equation}

Now, for $i\in\{1,\ldots,N\}$ and $u\in\nball[n]_{2\delta}(0)$, let $e_i=e_i(u)$ be the eigenvector of $a(u)$ corresponding to the eigenvalue $\lambda_i(u)$ and fulfilling the normalisation condition 
\begin{subequations} \label{eqs:eiestari_normalisation_and_duality_conditions}
  \begin{equation} \label{eq:ei_normalisation_condition}
    \norm{e_i}_0 = 1 , \qquad i\in\{1,\ldots,N\} .
  \end{equation}
  By the strict hyperbolicity of \eqref{eq:main_pde_plain}, the $e_i$ form a basis of $\rz[N]$. Let $\{\postsup{e}{\star}^i\}_{i\in\{1,\ldots,N\}}$ denote the basis dual to $\{e_i\}_{i\in\{1,\ldots,N\}}$, so that 
  \begin{equation} \label{eq:eiestari_duality_condition}
    \postsup{e}{\star}^i e_j = \ul{\delta ij} , \qquad i,j \in \{1,\ldots,N\} .
  \end{equation}
\end{subequations}
Observe that $e_i$, and thus $\postsup{e}{\star}^i$, are determined uniquely up to sign by \eqref{eq:ei_normalisation_condition}. Moreover, viewing $a$ as a linear map acting on the dual $\postsup{\rz[N]}{\star}$ of $\rz[N]$, we have 
\begin{equation} \label{eq:estaria}
  \postsup{e}{\star}^i a = \lambda_i\postsup{e}{\star}^i , \qquad i\in\{1,\ldots,N\} .
\end{equation}
Notice, as well, that with the matrix $a$, also $e_i$, $\postsup{e}{\star}^i$ and $\lambda_i$ are $C^{\max\{2,K\}}$.

\begin{remark*}
  Defining the $(N\times N)$--matrices $G$ and $G_0$ by $G_{ij}=e_i\cdot e_j$ and $\postsub{G}{0}_{ij}=\scprod{e_i,e_j}_0$, $i,j\in\{1,\ldots,N\}$, respectively, $\postsup{e}{\star}^i$, $i\in\{1,\ldots,N\}$ acts on vectors $v\in\rz[N]$ by 
  \begin{equation*}
    \postsup{e}{\star}^i v = \sum_j (G^{-1})^{ij} e_j\cdot v = \sum_j (\postsub{G}{0}^{-1})^{ij}\scprod{e_j,v}_0 .
  \end{equation*}
  Here, ``$\cdot$'' denotes the Euclidean inner product in $\rz[N]$. %
  The normalisation condition \eqref{eq:ei_normalisation_condition} states that $\postsub{G}{0}_{ii}=1$, $i\in\{1,\ldots,N\}$. This is different from \cite{john74}, where John sets $(G^{-1})^{ii}=1$, $i\in\{1,\ldots,N\}$.
\end{remark*}

\begin{remark} \label{rem:geometric_interpretation}
  For a better geometric understanding, consider the following. Think of $u$ as a mapping $u:\mathcal M\to\mathcal N$, where $\mathcal M=\rz\times[0,T]$ is (a strip in) the $(x,t)$--plane and $\mathcal N$ is an $N$--dimensional manifold. Then, given $(x_0,t_0)\in\mathcal M$ and writing $y_0=u(x_0,t_0)\in\mathcal N$, the vectors $\frac{\partial}{\partial x}u(x_0,t_0)$ and $\frac{\partial}{\partial t}u(x_0,t_0)$ in $T_{y_0}\mathcal N$ are tangent to the images under $u$ of the coordinate lines $\{t=t_0\}$ and $\{x=x_0\}$, respectively, in the $(x,t)$--plane. Moreover, $a(y_0)$ is a map of $T_{y_0}\mathcal N$ into itself, and the equation \eqref{eq:main_pde_plain} at $(x_0,t_0)$ gives a relation between the tangent vectors $\frac{\partial}{\partial x}u(x_0,t_0)$ and $\frac{\partial}{\partial t}u(x_0,t_0)$:
  \begin{equation*}
    \Bigl( \frac{\partial}{\partial t} u(x_0,t_0)\Bigr) + a\bigl(u(x_0,t_0)\bigr)\Bigl(\frac{\partial}{\partial x}u(x_0,t_0)\Bigr) = 0 .
  \end{equation*}
  A smooth vectorfield $v$ on $\mathcal N$ is an eigen--vectorfield of $a$, if $v(y)$ is an eigenvector of $a(y)$ at each $y\in\mathcal N$. So there is a smooth function $\lambda$ on $\mathcal N$ such that 
  \begin{equation*}
    av = \lambda v .
  \end{equation*}
  Note that the condition that the eigenvalues $\lambda$ of $a$ be real and distinct implies that the manifold $\mathcal N$ is parallelisable.

  At any arbitrary $y\in\mathcal N$, we can also consider $a(y)$ as a linear map from $T_y^\star\mathcal N$ into itself by:
  \begin{equation*}
    \zeta \mapsto \zeta a(y) , \qquad \forall \zeta\in T_y^\star\mathcal N ,
  \end{equation*}
  where 
  \begin{equation*}
    \bigl( \zeta a(y) \bigr)w = \zeta\bigl( a(y)w \bigr) , \qquad \forall w\in T_y\mathcal N .
  \end{equation*}
  A smooth $1$--form $\xi$ on $\mathcal N$ is an eigen--$1$--form of $a$, if $\xi(y)$ is an eigen--covector of $a(y)$ at each $y\in\mathcal N$. So there is a smooth function $\lambda'$ on $\mathcal N$ such that 
  \begin{equation*}
    \xi a = \lambda' \xi .
  \end{equation*}
  Assuming all the eigenvalues $\lambda_i$, $i\in\{1,\ldots,N\}$, of $a$ are distinct, and given a basis $\{e_i\}_{i\in\{1,\ldots,N\}}$ of eigen--vectorfields, the dual basis $\{\postsup{e}{\star}^i\}_{i\in\{1,\ldots,N\}}$ of $1$--forms fulfils 
  \begin{equation*}
    \postsup{e}{\star}^i(y)e_j(y) = \ul{\delta ij} , \qquad \forall i,j\in\{1,\ldots,N\},\ \forall y\in\mathcal N .
  \end{equation*}
  It follows that $\postsup{e}{\star}^i$ is an eigen--$1$--form of $a$ with the same eigenvalue function $\lambda_i$ as $e_i$.
\end{remark}

Now consider a non--zero ``seed'' function $f_0\in C_c^{K,1}\bigl(\rz;\nball[N]_\delta(0)\bigr)$ with support in the interval $I_0=[-1,1]$. Here, $C_c^{K,1}$ is the space of compactly supported $K$ times differentiable functions with Lipschitz continuous $K^{\text{th}}$ derivative.  For $\vartheta>0$, let 
\begin{equation} \label{eq:f}
  f = \vartheta f_0 .
\end{equation}
In what follows, we want to study solutions $u\in C^K\bigl(\rz\times[0,t_\star);\nball[N]_\delta(0)\bigr)$ of the Cauchy problem 
\begin{equation} \label{eq:main_cp}
  \left\{ \begin{aligned}
    \frac{\partial}{\partial t} u(x,t) + a\bigl(u(x,t)\bigr)\frac{\partial}{\partial x} u(x,t) &= 0 ,&\qquad (x,t)&\in\rz\times[0,t_\star) , \\[2.5pt]
    u(z,0) &= f(z) ,&\qquad z&\in\rz ,
  \end{aligned} \right.
\end{equation}
on their maximal time--slab of existence $\rz\times[0,t_\star)$. Our focus will lie on their behaviour at the final time $t_\star$.

\subsection{\texorpdfstring{Characteristics (cf.~\cite{john74})}{Characteristics}} \label{subsec:characteristics}

Consider a solution $u\in C^{K}\bigl(\rz\times[0,T];\nball[N]_\delta(0)\bigr)$ of \eqref{eq:main_cp} for some $T>0$. For $z\in\rz$ and $i\in\{1,\ldots,N\}$, let $\mathcal C_i(z)$ denote the $i^{\text{th}}$ characteristic starting at $z$, i.e., the set of points 
\begin{equation} \label{eq:Ci}
  \mathcal C_i(z) = \Bigl\{ \bigl(X_i(z,t),t\bigr)\in\rz\times[0,T] \Bigm| t\in[0,T] \Bigr\} ,
\end{equation}
where $X_i(z,t)$ is a solution of
\begin{equation} \label{eq:Xi_ivp}
  \left\{ \begin{aligned}
      \frac{\partial}{\partial t} X_i(z,t) &= \lambda_i\Bigl(u\bigl(X_i(z,t),t\bigr)\Bigr) , \\[0.5pt] 
      X_i(z,0) &= z .
  \end{aligned} \right.
\end{equation}
Note that, since $\lambda_i,u\in C^{K}$, the theory of ordinary differential equations tells us that $X_i$ is unique and itself $C^{K}$. Therefore, 
\begin{equation} \label{eq:Xiz1z2_ineq}
  X_i(z_1,t) < X_i(z_2,t) , \qquad\text{ for all $t\in[0,T]$, whenever $z_1<z_2$, }
\end{equation}
so that 
\begin{equation} \label{eq:Ri_width}
  X_i(-1,t) \leq X_i(z,t) \leq X_i(1,t) , \qquad\text{ for all $z\in I_0=[-1,1]$, $t\in[0,T]$. }
\end{equation}
Notice that $u$ is supported in the region delimited by $t=0$, $t=T$, and the extreme characteristics $\mathcal C_N(-1)$ and $\mathcal C_1(1)$, which are actually straight lines:
\begin{equation} \label{eq:extreme_chars}
  X_N(-1,t) = -1 + \lambda_N(0)t , \qquad X_1(1,t) = 1 + \lambda_1(0)t .
\end{equation}

We now show that our assumption \eqref{eq:sigma_pos} ensures that the sets $\mathcal R_i=\mathcal C_i(I_0)=\bigcup_{z\in I_0}\mathcal C_i(z)$, which we shall call \emph{characteristic strips}, are ``separated'' after a short time. Indeed, from \eqref{eq:Xi_ivp}, \eqref{eq:supinflambdai}, we have, for all $i\in\{0,\ldots,N\}$, $z\in I_0$, $t\in[0,T]$, 
\begin{equation} \label{eq:Xi_extremes}
  z + \underline\lambda_i t \leq X_i(z,t) \leq z + \overline\lambda_i t .
\end{equation}
But then, using \eqref{eq:sigma}, we have for $k<l$,
\begin{equation*}
  X_k(-1,t) - X_l(1,t) \geq \bigl( -1 + \underline\lambda_k t \bigr) - \bigl( 1 + \overline\lambda_l t \bigr) \geq -2 + ( \underline\lambda_k-\overline\lambda_l)t \geq -2 + \sigma t ,
\end{equation*}
which, by \eqref{eq:sigma_pos}, is strictly positive as soon as $t$ exceeds 
\begin{equation} \label{eq:t0}
  t_0 = \frac2\sigma .
\end{equation}
As a result, 
\begin{equation} \label{eq:RkRl_separated}
  \bigl(\mathcal R_k\bigcap \mathcal R_l\bigr)\bigcap\bigl\{(x,t)\bigm|t\in[t_0,T]\bigr\} = \emptyset , \qquad \forall k\neq l .
\end{equation}

Next, we observe that \eqref{eq:Xiz1z2_ineq} implies that we can introduce a new set of coordinates on $\rz\times[0,T]$.

\subsection{Characteristic coordinates} \label{subsec:char_coords}

Let $i\in\{1,\ldots,N\}$ be given. Since, by \eqref{eq:Xiz1z2_ineq}, $X_i(z_1,t)<X_i(z_2,t)$, whenever $z_1<z_2$, and since $X_i\in C^{K}$, we can introduce the following coordinates $(z_i,s_i)$ on $\rz\times[0,T]$: Given $(x,t)\in\rz\times[0,T]$, there is a unique $(z_i,s_i)\in\rz\times[0,T]$ such that
\begin{equation*}
  (x,t) = \bigl( X_i(z_i,s_i), s_i \bigr) .
\end{equation*}

Introducing the function (cf.~\cite{john74})
\begin{equation} \label{eq:rhoi}
  \rho_i(z_i,s_i) = \frac{\partial}{\partial z_i} X_i(z_i,s_i) 
\end{equation}
on $\rz\times[0,T]$, and recalling from \eqref{eq:Xi_ivp} that 
\begin{equation*}
  \lambda_i(z_i,s_i) = \frac{\partial}{\partial s_i} X_i(z_i,s_i) ,
\end{equation*}
we obtain
\begin{equation} \label{eqs:dccoords}
  \left\{ \begin{aligned}
    \frac{\partial}{\partial z_i} &= \rho_i\frac{\partial}{\partial x} , \\
    \frac{\partial}{\partial s_i} &= \lambda_i\frac{\partial}{\partial x} + \frac{\partial}{\partial t} ,
  \end{aligned} \right.
  \qquad\text{ and }\qquad
  \left\{ \begin{aligned}
    dx &= \rho_i dz_i + \lambda_i ds_i , \\
    dt &= ds_i .
  \end{aligned} \right.
\end{equation}

Note that $\rho_i$ represents the inverse density of the $i^{\text{th}}$ characteristics, whereas $\lambda_i$ represents their speed. It will turn out that $\rho_i$ acts as a regularising factor for $\frac{\partial}{\partial x}u$ inside the characteristic strip $\mathcal R_i$. The formation of ``shock'' will then be characterised by $\rho_i$ tending to zero.

\subsection{Bi--characteristic coordinates} \label{subsec:bichar_coords}

Observe that, by our assumption \eqref{eq:sigma_pos}, the intersection between an $i^{\text{th}}$ and a $j^{\text{th}}$ characteristic ($i\neq j$) has to be transversal. Fixing then $i,j\in\{1,\ldots,N\}$, $i\neq j$, we can eliminate $s_i$ in favour of $z_j$ as a coordinate, $z_j$ acting as a parameter along the characteristic $\mathcal C_i(z_i)$ (and likewise for $s_j$). The coordinates thus obtained on $\rz\times[0,T]$ are described as follows: Given $(x,t)\in\rz\times[0,T]$, there is a unique $(y_i,y_j)\in\rz[2]$ such that 
\begin{equation*}
  (x,t) = \Bigl( X_i\bigl( y_i, t'(y_i,y_j) \bigr), t'(y_i,y_j) \Bigr) = \Bigl( X_j\bigl( y_j, t'(y_i,y_j) \bigr), t'(y_i,y_j) \Bigr) ,
\end{equation*}
for some $C^K$--function $t'$ of $y_i$ and $y_j$. We compute 
\begin{align*}
  \frac{\partial}{\partial y_i} &= \Bigl( \rho_i + \lambda_i\frac{\partial}{\partial y_i}t' \Bigr)\frac{\partial}{\partial x} + \Bigl(\frac{\partial}{\partial y_i}t'\Bigr)\frac{\partial}{\partial t} = \lambda_j\Bigl(\frac{\partial}{\partial y_i}t'\Bigr)\frac{\partial}{\partial x} + \Bigl(\frac{\partial}{\partial y_i}t'\Bigr)\frac{\partial}{\partial t} , \\
  \frac{\partial}{\partial y_j} &= \lambda_i\Bigl(\frac{\partial}{\partial y_j}t'\Bigr)\frac{\partial}{\partial x} + \Bigl(\frac{\partial}{\partial y_j}t'\Bigr)\frac{\partial}{\partial t} = \Bigl(\rho_j + \lambda_j\frac{\partial}{\partial y_j}t'\Bigr)\frac{\partial}{\partial x} + \Bigl(\frac{\partial}{\partial y_j}t'\Bigr)\frac{\partial}{\partial t} .
\end{align*}
It follows that 
\begin{align*}
  \frac{\partial}{\partial y_i}t' &= \frac{\rho_i}{\lambda_j-\lambda_i} , & \frac{\partial}{\partial y_j}t' &= \frac{\rho_j}{\lambda_i-\lambda_j} ,
\end{align*}
where the denominators are non--zero by \eqref{eq:sigma_pos}. Hence, using also \eqref{eqs:dccoords}, 
\begin{subequations} \label{eqs:dbccoords}
  \begin{equation} \label{eqs:dbccoords_partial}
    \left\{ \begin{aligned}
      \frac{\partial}{\partial y_i} &= \frac{\rho_i\lambda_j}{\lambda_j-\lambda_i}\frac{\partial}{\partial x} + \frac{\rho_i}{\lambda_j-\lambda_i}\frac{\partial}{\partial t} = \frac{\rho_i}{\lambda_j-\lambda_i}\frac{\partial}{\partial s_j} = \frac{\partial}{\partial z_i} + \frac{\rho_i}{\lambda_j-\lambda_i}\frac{\partial}{\partial s_i} , \\[2.5pt]
      \frac{\partial}{\partial y_j} &= \frac{\rho_j\lambda_i}{\lambda_i-\lambda_j}\frac{\partial}{\partial x} + \frac{\rho_j}{\lambda_i-\lambda_j}\frac{\partial}{\partial t} = \frac{\rho_j}{\lambda_i-\lambda_j}\frac{\partial}{\partial s_i} = \frac{\partial}{\partial z_j} + \frac{\rho_j}{\lambda_i-\lambda_j}\frac{\partial}{\partial s_j} ,
    \end{aligned} \right.
  \end{equation}
  and 
  \begin{equation} \label{eqs:dbccoords_d}
    \left\{ \begin{aligned}
      dx &= \frac{\rho_i\lambda_j}{\lambda_j-\lambda_i} dy_i + \frac{\rho_j\lambda_i}{\lambda_i-\lambda_j}dy_j , \\
      dt = ds_i = ds_j &= \frac{\rho_i}{\lambda_j-\lambda_i} dy_i + \frac{\rho_j}{\lambda_i-\lambda_j} dy_j , \\
      dz_i &= dy_i , \\
      dz_j &= dy_j .
    \end{aligned} \right.
  \end{equation}
\end{subequations}

Before stating our results, we introduce the technical assumption ultimately responsible for the formation of shocks.

\subsection{\texorpdfstring{The first variation of $a$ and genuine non--linearity (cf.~\cite{john74})}{The first variation of a and genuine non--linearity}} \label{subsec:genuine_nl}

So far, we only required our Cauchy problem \eqref{eq:main_cp} to fulfil \eqref{eq:sigma_pos}, i.e., we assumed the matrix function $a$ of $u$ to be strictly  hyperbolic around the trivial solution in such a way that the different characteristic speeds are uniformly bounded away from each other. In order to ensure the formation of shock, we require, in addition, that the system \eqref{eq:main_pde_plain} be \emph{genuinely non--linear} around the trivial solution, i.e., for each $i$, we assume that the change of $\lambda_i(0)$ in the direction $e_i(0)$ is non--zero.

Before giving the precise definition at the end of this subsection, we introduce the following coefficients to describe the first variation of $a$: 
\begin{equation} \label{eq:cjkl}
  \cull{cjkl} = \cull{cjkl}(u) = \postsup{e}{\star}^j(u) \bigr( D_{e_l(u)} a(u) \bigr) e_k(u) , \qquad j,k,l\in\{1,\ldots,N\} ,
\end{equation}
where 
\begin{equation} \label{eq:Delphi}
  D_{e_l(u)} \varphi(u) = d\varphi(u)e_l(u) = \lim_{\epsilon\to0} \frac1\epsilon \Bigl( \varphi\bigl(u+\epsilon e_l(u)\bigr)-\varphi(u) \Bigr)
\end{equation}
denotes the directional derivative of a (scalar--, vector-- or matrix--valued) function $\varphi$ of $u$ in the direction $e_l$. Note that, since $\bigl\{e_i(u)\bigr\}_{i\in\{1,\ldots,N\}}$ forms a basis of the tangent space $T_u\rz[N]$ at $u$, the coefficients defined above are sufficient to completely describe the first variation of $a$.

Now, since $e_k$ is an eigenvector of $a$, so that $( a - \lambda_k\id )e_k = 0$, we have 
\begin{equation*}
  ( a - \lambda_k\id )( D_{e_l}e_k ) = -( D_{e_l}a - D_{e_l}\lambda_k\id )e_k .
\end{equation*}
Thus, using \eqref{eq:estaria}, \eqref{eq:eiestari_duality_condition} and \eqref{eq:cjkl}, we get 
\begin{multline*}
  (\lambda_m-\lambda_k)\postsup{e}{\star}^m( D_{e_l}e_k ) = \postsup{e}{\star}^m( a - \lambda_k\id )( D_{e_l}e_k ) \\
    = -\postsup{e}{\star}^m( D_{e_l}a - D_{e_l}\lambda_k\id )e_k = -\cull{cmkl} + \ul{\delta mk}(D_{e_l}\lambda_k) .
\end{multline*}
Setting $m=k$, this yields 
\begin{equation} \label{eq:Dellambdak}
  D_{e_l}\lambda_k = \cull{ckkl} ,
\end{equation}
while for $m\neq k$ we get, 
\begin{equation*}
  \postsup{e}{\star}^m(D_{e_l}e_k) = \frac1{\lambda_k-\lambda_m}\cull{cmkl} .
\end{equation*}
Thus, expanding $D_{e_l}e_k$ in the basis $\{e_1,\ldots,e_N\}$, we have 
\begin{equation*}
  D_{e_l}e_k = \sum_m \bigl(\postsup{e}{\star}^m(D_{e_l}e_k)\bigr)e_m = \bigl(\postsup{e}{\star}^k(D_{e_l}e_k)\bigr)e_k + \sum_{\stack{m}{m\neq k}}\frac1{\lambda_k-\lambda_m}\cull{cmkl}e_m .
\end{equation*}
But, since $\norm{e_k}_0=1$, whence $\bigscprod{(D_{e_l}e_k),e_k}_0=\frac12 D_{e_l}\bigl(\norm{e_k}_0^2\bigr) = 0$, we obtain 
\begin{equation*}
  \postsup{e}{\star}^k(D_{e_l}e_k) = - \sum_{\stack{m}{m\neq k}} \frac1{\lambda_k-\lambda_m}\cull{cmkl}\scprod{e_m,e_k}_0 ,
\end{equation*}
from which we infer 
\begin{equation} \label{eq:Delek}
  D_{e_l}e_k = \sum_{\stack{m}{m\neq k}}\frac1{\lambda_m-\lambda_k}\cull{cmkl}\bigl( \scprod{e_m,e_k}_0e_k - e_m \bigr) .
\end{equation}

From the duality condition \eqref{eq:eiestari_duality_condition}, we get 
\begin{equation*}
  (D_{e_l}\postsup{e}{\star}^j)e_k = -\postsup{e}{\star}^j(D_{e_l} e_k) ,
\end{equation*}
and compute 
\begin{equation*}
  (D_{e_l}\postsup{e}{\star}^j)e_k = -\sum_{\stack{m}{m\neq k}}\frac1{\lambda_m-\lambda_k}\cull{cmkl}\bigl( \scprod{e_m,e_k}_0\ul{\delta jk} - \ul{\delta jm} \bigr) .
\end{equation*}
Thus, expanding $D_{e_l}\postsup{e}{\star}^j$ in the basis $\{\postsup{e}{\star}^1,\ldots,\postsup{e}{\star}^N\}$, it follows that 
\begin{align}
\nonumber
  D_{e_l}\postsup{e}{\star}^j &= \sum_{k}\bigl((D_{e_l}\postsup{e}{\star}^j)e_k\bigr)\postsup{e}{\star}^k = \sum_{\stack{k,m}{m\neq k}}\frac1{\lambda_k-\lambda_m}\cull{cmkl}\bigl( \scprod{e_m,e_k}_0\ul{\delta jk} - \ul{\delta jm} \bigr)\postsup{e}{\star}^k \\
\nonumber
    &= \sum_{\stack{m}{m\neq j}}\frac1{\lambda_j-\lambda_m}\cull{cmjl}\scprod{e_m,e_j}_0\postsup{e}{\star}^j - \sum_{\stack{k}{k\neq j}}\frac1{\lambda_k-\lambda_j}\cull{cjkl}\postsup{e}{\star}^k \\
 \label{eq:Delestarj}
    &= \sum_{\stack{k}{k\neq j}}\frac1{\lambda_j-\lambda_k}\bigl( \cull{ckjl}\scprod{e_k,e_j}_0\postsup{e}{\star}^j + \cull{cjkl}\postsup{e}{\star}^k \bigr) .
\end{align}

We are now ready to give the definition of \emph{genuine non--linearity} for the system \eqref{eq:main_pde_plain}: we simply require $\cull{ciii}(0)=D_{e_i}\lambda_i(0)\neq0$ (cf.~\eqref{eq:cjkl} and \eqref{eq:Dellambdak}).

By an appropriate choice of sign for each $e_i$ (and hence for $\postsup{e}{\star}^i$), we can without loss of generality assume that 
\begin{equation} \label{eq:gen_nl_cond}
  \cull{ciii}(0) < 0 , \qquad \forall i\in\{1,\ldots,N\} .
\end{equation}
Choosing then $\delta>0$ small enough, we can ensure by continuity that 
\begin{equation} \label{eq:gen_nl_cond_unif}
  \cull{ciii}(v) < 0 , \qquad \forall v\in\nball[N]_{2\delta}(0),\ \forall i\in\{1,\ldots,N\} .
\end{equation}

\subsection{Statement of results} \label{subsec:statement_of_results}

Before we can state our results, we need to introduce the following first and second order quantities related to a solution $u\in C^K\bigl(\rz\times[0,t_\star);\nball[N]_\delta(0)\bigr)$ of the Cauchy--problem \eqref{eq:main_cp} subject to the additional assumption \eqref{eq:gen_nl_cond} on its maximal time--slab $\rz\times[0,t_\star)$ of existence.

Fix $i\in\{1,\ldots,N\}$. For $k\in\{1,\ldots,N\}$, let 
\begin{equation} \label{eq:mt_wk}
  w^k(x,t) = \postsup{e}{\star}^k\bigl(u(x,t)\bigr) \frac{\partial}{\partial x} u(x,t) ,
\end{equation}
and, for the given $i$, let 
\begin{equation} \label{eq:mt_vi}
  v^i(z_i,s_i) = \postsup{e}{\star}^i\bigl(u(z_i,s_i)\bigr) \frac{\partial}{\partial z_i} u(z_i,s_i) = \rho_i(z_i,s_i)w^i(z_i,s_i) .
\end{equation}
Here, and in the following, we abuse notation and write
\begin{equation*}
  \text{$u(z_i,s_i)$ for $u\bigl(X_i(z_i,s_i),s_i\bigr)$, etc\ldots}
\end{equation*}
Further, define 
\begin{equation} \label{eqs:mt_muinui}
  \mu_i(z_i,s_i) = \frac{\partial}{\partial z_i}\rho_i(z_i,s_i) \qquad\text{ and }\qquad \nu^i(z_i,s_i) = \frac{\partial}{\partial z_i}v^i(z_i,s_i) .
\end{equation}
Finally, fix $j\in\{1,\ldots,N\}$, $j\neq i$, and define 
\begin{equation} \label{eqs:mt_tauijomegaij}
  \presup{\tau}{(i)}_j(y_i,y_j) = \frac{\partial}{\partial y_j}\rho_j(y_i,y_j) \qquad\text{ and }\qquad \presup{\omega}{(i)}^j(y_i,y_j) = \frac{\partial}{\partial y_j}v^j(y_i,y_j) .
\end{equation}

\begin{theorem} \label{thm:main_thm}
  Let $N,K\in\nz\setminus\{0\}$, let $U\subset\rz[N]$ be an open neighbourhood of $0$, and let $a\in C^{\max\{2,K\}}(U;\rz[N\times N])$ be such that the system 
\begin{equation*}
\tag{\ref{eq:main_pde_plain}}
  \frac{\partial}{\partial t}u+a(u)\frac{\partial}{\partial x}u=0   
\end{equation*}
is strictly hyperbolic and genuinely non--linear. Let $\delta>0$ be sufficiently small, so that conditions 
\begin{equation*} 
\tag{\ref{eq:2deltaball_in_U}}
  \nball[N]_{2\delta}(0)\subset U ,
\end{equation*}
\begin{equation*}
\tag{\ref{eq:sigma_pos}}
  \sigma > 0 
\end{equation*}
and
\begin{equation*}
\tag{\ref{eq:gen_nl_cond_unif}}
  \cull{ciii}(v) < 0 , \qquad \forall v\in\nball[N]_{2\delta}(0), i\in\{1,\ldots,N\} .
\end{equation*}
hold. Let $f_0\in C_c^{K,1}\bigl(\rz;\nball[N]_\delta(0)\bigr)$ have support in $I_0=[-1,1]$. Finally, let $\varepsilon\in\bigl(0,\frac1{100}\bigr)$ be a parameter.

Then there is a constant $\vartheta_0>0$, depending solely on $a$, $\delta$, $\varepsilon$ and the $C^{K,1}$--norm of $f_0$, such that the following statements hold for any solution $u\in C^K\bigl(\rz\times[0,t_\star);\nball[N]_\delta(0)\bigr)$ of 
\begin{equation*}
\tag{\ref{eq:main_cp}}
  \left\{ \begin{aligned}
    \frac{\partial}{\partial t} u(x,t) + a\bigl(u(x,t)\bigr)\frac{\partial}{\partial x} u(x,t) &= 0 ,&\qquad (x,t)&\in\rz\times[0,t_\star) , \\[2.5pt]
    u(z,0) &= f(z) ,&\qquad z&\in\rz ,
  \end{aligned} \right.
\end{equation*}
with initial data $f=\vartheta f_0$ and given on its maximal time--slab of existence $\rz\times[0,t_\star)$, whenever $0<\vartheta<\vartheta_0$.
  \begin{enumerate} \renewcommand{\theenumi}{\roman{enumi}}
    \item \label{thm:main_thm_firstorder} (cf.~\cite{john74}) For every $i\in\{1,\ldots,N\}$, we have: $\rho_i$ and $v^i$ are bounded everywhere. Moreover, $\rho_i(z_i,t)>0$ for all $z_i\in\rz$, $t<t_\star$. Also, $w_i$ is bounded outside the strip $\mathcal R_i$.
    \item \label{thm:main_thm_tstar} Let $W_0^+ = \sup\limits_{\stack{k\in\{1,\ldots,N\}}{z\in\rz}}w^k(z,0) . $ Then
       \begin{equation*}
         \min_i\frac1{(1+\varepsilon)^3\abs{\cull{ciii}(0)}W_0^+} \leq t_\star\leq \max_i\frac1{(1-\varepsilon)^4\abs{\cull{ciii}(0)}W_0^+} .
       \end{equation*}
      Moreover, there is an index $i^+\in\{1,\ldots,N\}$ and an initial location $z^+\in\rz$ such that $\lim_{t\nearrow t_\star}\rho_{i^+}(z^+,t) = 0$.
    \item \label{thm:main_thm_secondorder} If $K\geq2$, we have for every $i\in\{1,\ldots,N\}$: $\mu_i$ and $\nu^i$ remain bounded inside $\mathcal R_i$. Moreover, for every $j\in\{1,\ldots,N\}\setminus\{i\}$, $\presup{\tau}{(i)}_j$ and $\presup{\omega}{(i)}^j$ remain bounded inside $\mathcal R_i$. Also, $\frac{\partial}{\partial x} w^i$ is bounded outside all the strips, i.e., in $\bigl(\rz\times[0,t_\star)\bigr)\setminus\bigcup_{k\in\{1,\ldots,N\}}\mathcal R_k$.
  \end{enumerate}
\end{theorem}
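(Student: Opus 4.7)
The strategy is to decompose $\frac{\partial}{\partial x} u = \sum_k w^k e_k(u)$ and derive transport equations for each $w^k$ along its own characteristic. Applying $\postsup{e}{\star}^j$ to the equation obtained by differentiating \eqref{eq:main_cp} in $x$, and using \eqref{eq:estaria}, \eqref{eq:Delek}, \eqref{eq:Delestarj}, one gets a Riccati-type ODE along $\mathcal{C}_j$ of schematic form $(\partial_t + \lambda_j\partial_x)w^j = -\cull{cjjj}(u)(w^j)^2 + \sum_{(k,l)\neq(j,j)} \gamma^j_{kl}(u)\,w^k w^l$. Passing into $i$-characteristic coordinates and using $v^i = \rho_i w^i$, the Riccati non-linearity in $w^i$ is absorbed: $v^i$ and $\rho_i$ satisfy along $\mathcal{C}_i(z_i)$ a \emph{regular} ODE system $\partial_{s_i} v^i = \rho_i\cdot(\text{bilinear in } w^k,\,k\neq i)$ and $\partial_{s_i}\rho_i = \cull{ciii}(u)\,v^i + \rho_i\cdot(\text{bilinear in } w^k,\,k\neq i)$, with initial data $v^i(z_i,0)=w^i(z_i,0)$ and $\rho_i(z_i,0)=1$.

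\textbf{Part (i).} Run a bootstrap on the $w^k$: assume a priori the expected smallness. Along $\mathcal{C}_k$ outside its own strip $\mathcal{R}_k$ the initial data vanish, so the Riccati equation is forced only by cross products of already small quantities, yielding $w^k=O(\vartheta^2)$ outside $\mathcal{R}_k$. Inside $\mathcal{R}_i$, the cross terms in the transport equations for $v^i$ and $\rho_i$ are, by \eqref{eq:RkRl_separated}, of order $\vartheta^2$ after time $t_0$, and uniformly bounded before; direct integration then closes the bootstrap and gives the claimed boundedness of $v^i$ and $\rho_i$, as well as $\rho_i>0$ on $\rz\times[0,t_\star)$.

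\textbf{Part (ii).} Integrating the $\rho_i$-equation along $\mathcal{C}_i(z_i)$, with cross-term contribution $R=O(\vartheta^2)$, noting that $v^i(z_i,s_i)=w^i(z_i,0)+O(\vartheta^2)$ from its own transport equation (with $O(\vartheta^2)$ forcing), and that $\cull{ciii}(u)=\cull{ciii}(0)+O(\vartheta)$ along $\mathcal{C}_i$, yields the expansion
\begin{equation*}
  \rho_i(z_i,s_i) = 1 + \cull{ciii}(0)\,w^i(z_i,0)\,s_i + O\bigl(\vartheta^2 s_i\bigr) .
\end{equation*}
Selecting $(i^+,z^+)$ which (nearly) realises $\sup_{i,z}\bigl(-\cull{ciii}(0)\,w^i(z,0)\bigr)$ shows that $\rho_{i^+}(z^+,\cdot)$ vanishes at the claimed time, giving the upper bound on $t_\star$ together with the limit statement. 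The lower bound follows uniformly: by the expansion $\rho_i(z_i,s_i)$ cannot vanish before the linear term cancels the constant $1$. The factors $(1\pm\varepsilon)^k$ absorb the $O(\vartheta)$ errors by taking $\vartheta_0$ small in terms of $\varepsilon$.

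\textbf{Part (iii).} Differentiating the $\rho_i$- and $v^i$-ODEs once more in $z_i$ yields linear inhomogeneous ODEs for $(\mu_i,\nu^i)$ along $\mathcal{C}_i(z_i)$. All coefficients and inhomogeneities are controlled by (i)--(ii) except for those coming from differentiating terms containing $w^j$ with $j\neq i$; these produce $\frac{\partial}{\partial z_i} w^j = \rho_i\frac{\partial w^j}{\partial x}$ evaluated on $\mathcal{C}_i$, which is not directly bounded inside $\mathcal{R}_i$. This is the main obstacle. The remedy is to switch to the bi-characteristic coordinates of Section~\ref{subsec:bichar_coords}: by \eqref{eqs:dbccoords_partial}, $\partial_{y_j}$ is a multiple of $\partial_{s_i}$, so $\presup{\tau}{(i)}_j$ and $\presup{\omega}{(i)}^j$ are precisely the $\mathcal{C}_i$-derivatives of $\rho_j$ and $v^j$ in the bi-characteristic gauge. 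Deriving evolution equations for $(\presup{\tau}{(i)}_j,\presup{\omega}{(i)}^j)$ along $\mathcal{C}_i(z_i)$ parametrised by $y_j$, one finds that the full vector $\bigl(\mu_i,\nu^i,\{\presup{\tau}{(i)}_j\}_{j\neq i},\{\presup{\omega}{(i)}^j\}_{j\neq i}\bigr)$ satisfies a closed linear inhomogeneous system along $\mathcal{C}_i(z_i)$ with coefficients and forcing bounded by (i)--(ii). A Gronwall argument delivers the required uniform bound inside $\mathcal{R}_i$. For $\frac{\partial w^i}{\partial x}$ outside every strip, all $w^k=O(\vartheta^2)$, so differentiating the $w^i$-transport equation in $x$ gives a linear ODE for $\partial_x w^i$ with $O(\vartheta^2)$ coefficients and forcing, integrable from zero initial data on $(\rz\setminus I_0)\times\{0\}$ via Gronwall.
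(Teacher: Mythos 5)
Your overall architecture coincides with the paper's: Riccati equations for the $w^k$, the regularisation $v^i=\rho_i w^i$, a bootstrap for the first--order quantities, integration of the $\rho_i$--equation for the lifespan, and bi--characteristic quantities for the second order. There are, however, genuine gaps at the two places where the real work happens.

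First, in part (i) the estimate of $w^i$ \emph{outside} $\mathcal R_i$ is not ``forced only by cross products of already small quantities'': the backward $i$--characteristic through such a point crosses the other strips $\mathcal R_k$, where $w^k=v^k/\rho_k$ is large (and blows up as $t\nearrow t_\star$). The bootstrap for $V$ closes only because (a) $\cull{\gamma ikk}=0$ for $k\neq i$ (eq.~\eqref{eq:gammaill_is_zero}), so no pure square of the large factor occurs, and (b) by transversality the time spent in $\mathcal R_k$ is short in the precise sense that $\int_{\Pi^{z_i}_k}\abs{w^k}\,dt' = \int \abs{v^k}\,dy_k/\abs{\lambda_i-\lambda_k} = \mathcal O(J)$ (eq.~\eqref{eq:intpikwk_is_OJ}). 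Omitting this mechanism leaves the central estimate $V=\mathcal O(W_0^2)$ unproved. A smaller omission in part (ii): to absorb the $\mathcal O(\vartheta W_0)$ errors into $(1\pm\varepsilon)$ multiples of $W_0^+$ one needs $W_0=\mathcal O(W_0^+)$, which requires the separate argument based on $\int f_0'=0$ and the Lipschitz bound on $f_0'$ (eq.~\eqref{eq:W0_is_OW0p}).

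Second, in part (iii) the system for $\bigl(\presup{\tau}{(i)}_j,\presup{\omega}{(i)}^j\bigr)$ does \emph{not} close along $\mathcal C_i(z_i)$: differentiating $\presup{\tau}{(i)}_j=\frac{\partial}{\partial y_j}\rho_j$ again in the $y_j$ direction produces $\frac{\partial^2}{\partial y_j^2}\rho_j$, a new uncontrolled quantity, so there is no ``closed linear inhomogeneous system along $\mathcal C_i(z_i)$'' for the full vector. The correct direction is the opposite one: one computes $\frac{\partial}{\partial y_i}\presup{\tau}{(i)}_j=\frac{\partial}{\partial y_j}\bigl(\frac{\partial}{\partial y_i}\rho_j\bigr)$, where $\frac{\partial}{\partial y_i}\rho_j$ is proportional to $\frac{\partial}{\partial s_j}\rho_j$ and hence a known first--order expression, and integrates along $j$--th characteristics in $y_i\in[-1,1]$ starting from data on $\mathcal C_i(\pm1)$; only afterwards are $(\mu_i,\nu^i)$ bounded along $\mathcal C_i$ by the coupled Gronwall lemma. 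Supplying that boundary data requires bounds on $\frac{\partial}{\partial x}w^j$ and $\frac{\partial}{\partial x}\rho_j$ on $\partial\mathcal R_i$, and your argument for $\frac{\partial}{\partial x}w^i$ outside the strips has the same defect as in part (i): the coefficients of the linear ODE are not $\mathcal O(\vartheta^2)$ while the backward characteristic crosses another strip. The paper instead works in $\rz\times[0,t_\star)\setminus\bigcup_k\mathcal S_k$ and uses the lower bound \eqref{eq:rhoi_lwr_bd_Cj_cap_Ri} on $\rho_k$ along transversal characteristics that exit $\mathcal R_k$ to keep $w^k$, and hence the coefficients, merely bounded.
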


\subsection{Some notation} \label{subsec:notation}

In the following and similarly to \cite{john74}, we shall write $A=\mathcal O(B)$ for two quantities $A$ and $B$ depending on the initial data $f=\vartheta f_0$, whenever there exists a constant $C$, depending only on the matrix function $v\mapsto a(v)$, on the size $\delta>0$ of the $N$--ball on which our system \eqref{eq:main_pde_plain} is uniformly strictly hyperbolic, in the sense that \eqref{eq:sigma_pos} holds, as well as genuinely non--linear, in the sense that \eqref{eq:gen_nl_cond_unif} holds, and on the $C^{K,1}$--norm of the seed $f_0$, such that 
\begin{equation*}
  \bignorm{A(f)} \leq C\bignorm{B(f)} ,
\end{equation*}
for all $f$ with $\vartheta>0$ small enough. Here, the meaning of $\norm{\argplaceholder}$ depends on the quantity under consideration. It is the usual absolute value when applied to scalars. If the quantities are vectors, we use the Euclidean norm $\abs{\argplaceholder}$, the norm $\norm{\argplaceholder}_0$ or the norm $\norm{\argplaceholder}_{0^\star}$ (cf.~\eqref{eqs:ei0_norms}), in dependence of whether the quantity lives in $\nball[N]_{\delta}(0)\subset\rz[N]$, in the tangent space $T_u\rz[N]$ or in the co--tangent space $T_u^\star\rz[N]$, respectively.

Also, for the rest of the paper, let $\varepsilon\in\bigl(0,\frac1{100}\bigr)$ be fixed. We will use it as a parameter to tweak our estimates so as not to get too big deviations, the idea being that $\varepsilon$ is very close to zero (assuming $\varepsilon<1/100$ is enough, but we can obtain better approximations if we choose it even smaller).

\subsection{A reminder on the Gronwall lemma} \label{subsec:Gronwall}

For the sake of completeness, we recall the precise statement of the Gronwall lemma in the form in which we are going to apply it. We do this to avoid stronger regularity conditions or additional assumptions on the signs sometimes encountered in the literature. Note also that the lemma remains valid with reversed inequalities.
\begin{lemma} \label{lem:G}
  Let $T>0$ and let $\eta$ be an absolutely continuous, real--valued function on $[0,T)$. Also, let $\psi$ and $\varphi$ be two locally integrable, real--valued functions on $[0,T)$. Assume that 
  \begin{equation} \label{eq:G_diffineq}
    \frac{d}{dt}\eta(t) \leq \psi(t)\eta(t) + \varphi(t) 
  \end{equation}
  is satisfied for almost every $t\in[0,T)$. Then the following inequality holds for \emph{all} $t\in[0,T)$
  \begin{equation} \label{eq:G_ineq}
    \eta(t) \leq e^{\int_0^t\psi(r)dr}\biggl( \eta(0) + \int_0^t e^{-\int_0^s\psi(r)dr}\varphi(s)ds \biggr) .
  \end{equation}
\end{lemma}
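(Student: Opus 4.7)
The plan is to use the classical integrating-factor method. I would introduce the antiderivative $\Psi(t) = \int_0^t \psi(r)\,dr$, which is absolutely continuous on $[0,T)$ by local integrability of $\psi$, and set $\zeta(t) = e^{-\Psi(t)} \eta(t)$. Since $\eta$ is absolutely continuous by hypothesis, and since $t \mapsto e^{-\Psi(t)}$ is absolutely continuous on every compact sub-interval of $[0,T)$ (the exponential being smooth and $\Psi$ being absolutely continuous with values lying in a compact set on each such sub-interval), their product $\zeta$ is absolutely continuous on every $[0,T']$ with $T'<T$. In particular, the fundamental theorem of calculus applies to $\zeta$ on any such interval.

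Next, I would compute $\zeta'$ almost everywhere via the product rule, obtaining
\begin{equation*}
  \zeta'(t) = e^{-\Psi(t)}\bigl(\eta'(t) - \psi(t)\eta(t)\bigr) \qquad \text{for a.e. } t\in[0,T).
\end{equation*}
The assumed differential inequality \eqref{eq:G_diffineq} then yields, for almost every $t \in [0,T)$,
\begin{equation*}
  \zeta'(t) \leq e^{-\Psi(t)} \varphi(t).
\end{equation*}
Fixing an arbitrary $t\in[0,T)$ and integrating this a.e.\ inequality from $0$ to $t$, the FTC for absolutely continuous functions gives
\begin{equation*}
  \zeta(t) - \zeta(0) = \int_0^t \zeta'(s)\,ds \leq \int_0^t e^{-\Psi(s)} \varphi(s)\,ds.
\end{equation*}
Since $\zeta(0) = \eta(0)$, multiplying through by the strictly positive factor $e^{\Psi(t)}$ and rearranging recovers exactly \eqref{eq:G_ineq}. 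This holds for \emph{every} $t \in [0,T)$ (not merely almost every $t$) because both sides depend continuously on $t$: the left-hand side $\eta$ is absolutely continuous hence continuous, and the right-hand side is a continuous function of its upper limit of integration.

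The argument presents no genuine obstacle; the only care required is regularity bookkeeping. Specifically, one must justify that the product of two absolutely continuous functions on a compact interval is absolutely continuous (a standard real-analysis fact, reducing to boundedness of each factor plus the absolute continuity definition), that the chain rule applies to $e^{-\Psi}$ (immediate from local Lipschitz continuity of the exponential together with absolute continuity of $\Psi$), and that an a.e.\ pointwise inequality between derivatives integrates to a pointwise inequality between the original functions via the FTC for absolutely continuous functions — which is precisely the content of the absolutely continuous framework, and is what makes the statement cleaner than the $C^1$ version. The parenthetical remark that the lemma remains valid under reversed inequalities is then obtained at no extra cost by applying the established version to the triple $(-\eta, \psi, -\varphi)$.
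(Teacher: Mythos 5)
Your proof is correct and follows essentially the same route as the paper: introduce the integrating factor $e^{-\int_0^s\psi(r)dr}$, differentiate the product almost everywhere, and integrate via the fundamental theorem of calculus for absolutely continuous functions. Your regularity bookkeeping (absolute continuity of the product, chain rule for the exponential) is slightly more explicit than the paper's, but the argument is the same.
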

\begin{proof}
  By the fundamental theorem of calculus, we have $\frac{d}{ds}\int_0^s\psi(r)dr=\psi(s)$ for almost every $s\in[0,T)$, since $\psi$ is locally integrable. Therefore, for almost every $s\in[0,T)$, 
  \begin{equation*}
    \frac{d}{ds}\biggl( \eta(s)e^{-\int_0^s\psi(r)dr} \biggr) = e^{-\int_0^s\psi(r)dr}\Bigl( \frac{d}{ds}\eta(s) + \psi(s)\eta(s) \Bigr) \stackrel{\eqref{eq:G_diffineq}}\leq e^{-\int_0^s\psi(r)dr}\varphi(s) .
  \end{equation*}
  Integrating this from $0$ to $t\in[0,T)$, we obtain for \emph{every} $t\in[0,T)$ that 
  \begin{equation*}
    \eta(t)e^{-\int_0^t\psi(r)dr} - \eta(0) \leq \int_0^t e^{-\int_0^s\psi(r)dr}\varphi(s)ds ,
  \end{equation*}
  from which \eqref{eq:G_ineq} is immediate.
\end{proof}

We deduce the following result for a special case of two coupled integral inequalities.
\begin{lemma} \label{lem:sG}
  Let $T>0$ and let $\eta_0$, $\eta_1$ be two non--negative, locally integrable functions on $[0,T)$. Assume further that there exist non--negative constants $A_0$, $A_1$, $B_0$, $B_1$, as well as two real--valued, locally integrable functions $\varphi_0$, $\varphi_1$ on $[0,T)$, such that 
  \begin{equation} \label{eqs:sG_intineqs}
    \eta_i(t) \leq A_i\int_0^t\eta_i(s)ds + B_i\int_0^t\eta_{1-i}(s)ds + \varphi_i(t) ,\qquad i\in\{0,1\} ,
  \end{equation}
  is satisfied for almost every $t\in[0,T)$. Then there are constants $c_i^j$, $d$, $f^j$, $g_i$ ($i,j\in\{0,1\}$), depending only on $A_0$, $A_1$, $B_0$, $B_1$ and $T$, such that the following inequalities hold for every $t\in[0,T)$ and every $i\in\{0,1\}$
  \begin{equation} \label{eqs:sG_ineqs}
    \eta_i(t) \leq \sum_{j=0}^1 c_i^j\Biggl( d \int_0^t e^{d(t-s)} \int_0^s e^{f^j(s-r)} \varphi_j(r) dr\,ds + \int_0^t e^{g_i(t-s)} \varphi_j(s) ds \Biggr) + \varphi_i(t) .
  \end{equation}
  In particular, if $\varphi_0$ and $\varphi_1$ are bounded on $[0,T)$, then so are $\eta_0$ and $\eta_1$.
\end{lemma}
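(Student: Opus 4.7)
The strategy is to pass to the primitives $\xi_i(t)=\int_0^t \eta_i(s)\,ds$ and reduce the coupled integral system \eqref{eqs:sG_intineqs} to a pair of differential inequalities, to which Lemma~\ref{lem:G} can be applied iteratively: first via a sum trick, in order to decouple, and then on each component separately, in order to reinstate the contribution of the forcing.

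Concretely, since $\eta_i\geq 0$ is locally integrable, each $\xi_i$ is absolutely continuous, non--decreasing, vanishes at $0$, and satisfies $\xi_i'=\eta_i$ almost everywhere. The hypothesis \eqref{eqs:sG_intineqs} thus rewrites as
\begin{equation*}
  \xi_i'(t)\leq A_i\xi_i(t)+B_i\xi_{1-i}(t)+\varphi_i(t),\qquad i\in\{0,1\},
\end{equation*}
valid a.e. on $[0,T)$. Setting $\Xi=\xi_0+\xi_1$, $\Phi=\varphi_0+\varphi_1$, and $d=\max(A_0+B_1,\,A_1+B_0)$, summation produces $\Xi'(t)\leq d\,\Xi(t)+\Phi(t)$ with $\Xi(0)=0$, so Lemma~\ref{lem:G} yields
\begin{equation*}
  \xi_i(t)\leq \Xi(t)\leq \int_0^t e^{d(t-s)}\bigl(\varphi_0(s)+\varphi_1(s)\bigr)\,ds.
\end{equation*}

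A second application of Lemma~\ref{lem:G} to the inequality for $\xi_i$ alone, with $B_i\xi_{1-i}+\varphi_i$ treated as the forcing term, gives $\xi_i(t)\leq\int_0^t e^{A_i(t-s)}\bigl(B_i\xi_{1-i}(s)+\varphi_i(s)\bigr)\,ds$; inserting the first--step bound on $\xi_{1-i}(s)$ yields, for each $i$, an estimate consisting of one double integral (outer kernel $e^{A_i(t-s)}$, inner kernel $e^{d(s-r)}$) acting on $\varphi_0+\varphi_1$, plus one single integral acting on $\varphi_i$. Plugging the resulting bounds for both $\xi_i$ and $\xi_{1-i}$ into the original inequality $\eta_i(t)\leq A_i\xi_i(t)+B_i\xi_{1-i}(t)+\varphi_i(t)$, splitting $\varphi_0+\varphi_1$ term by term, and overestimating $e^{A_i(t-s)}\leq e^{d(t-s)}$ where convenient, produces an inequality of exactly the claimed form \eqref{eqs:sG_ineqs}, with $f^j=g_i=d$ and constants $c_i^j$ readable off the coefficients $A_i,B_i,B_{1-i},d$ (e.g.\ $c_i^j=\max(A_i,B_i)$ works, using $A_iB_i+B_iB_{1-i}=B_i(A_i+B_{1-i})\leq B_i d$ for the double-integral coefficient). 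The ``in particular'' statement is then immediate: when $\varphi_0,\varphi_1$ are bounded on $[0,T)$, the integrals on the right of \eqref{eqs:sG_ineqs} reduce to elementary exponential expressions in $t$, uniformly bounded on the compact interval $[0,T]$. I do not anticipate any real conceptual obstacle; the only subtle point is the routine bookkeeping needed to match the precise form of \eqref{eqs:sG_ineqs}, which is why it is worth stating the lemma once and for all here.
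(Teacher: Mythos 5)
Your argument is correct in substance and follows the same basic route as the paper's: pass to the primitives $\xi_i(t)=\int_0^t\eta_i(s)\,ds$, read \eqref{eqs:sG_intineqs} as a differential inequality, and reduce to Lemma~\ref{lem:G}. Where you genuinely differ is the decoupling device. The paper substitutes the inequality for one primitive into the other, using the monotonicity bound $\int_0^t\widetilde\eta_{1-i}(s)\,ds\leq T\,\widetilde\eta_{1-i}(t)$, which forces it to introduce a second layer of primitives $H_i(t)=\int_0^t\widetilde\eta_i(s)\,ds$ and produces the $e^{A_iT}B_iT$ factors in its constants. You instead sum the two components and apply Gronwall once to $\Xi=\xi_0+\xi_1$ with $d=\max(A_0+B_1,A_1+B_0)$; this is cleaner, avoids the second primitive, and does not need the finiteness of $T$ except for the final boundedness claim. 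Both routes deliver estimates of the required shape and, more importantly, the only consequence the paper ever uses, namely that bounded $\varphi_0,\varphi_1$ force bounded $\eta_0,\eta_1$.

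One caveat in your bookkeeping: the step ``overestimating $e^{A_i(t-s)}\leq e^{d(t-s)}$ where convenient'' and replacing the exact coefficients $A_i$, $B_i$, $B_i(A_i+B_{1-i})$ by the common $c_i^j=\max(A_i,B_i)$ and $c_i^j d$ is only legitimate when the integrals being enlarged are non--negative. The lemma allows $\varphi_0,\varphi_1$ to be merely real--valued, so $\int_0^t e^{A_i(t-s)}\varphi_j(s)\,ds$ may be negative, and enlarging its coefficient or its kernel can then destroy the upper bound. This is easily repaired: since $\varphi_i\leq\abs{\varphi_i}$, you may replace $\varphi_i$ by $\abs{\varphi_i}$ in \eqref{eqs:sG_intineqs} from the outset and run the whole argument with non--negative forcings (at the cost of landing on \eqref{eqs:sG_ineqs} with $\abs{\varphi_j}$ in place of $\varphi_j$, which is just as useful); alternatively, keep the exact coefficients throughout, which is what the paper does and why its $c_i^j$, $f^j$, $g_i$ look more baroque. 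With that repair your proof is complete.
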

\begin{proof}
  For $t\in[0,T)$ and $i\in\{0,1\}$, let 
  \begin{equation*}
    \widetilde\eta_i(t) = \int_0^t\eta_i(s)ds .
  \end{equation*}
  Then $\widetilde\eta_i$ is absolutely continuous, hence almost everywhere differentiable, as well as locally integrable. Moreover, $\widetilde\eta_i$ is non--negative, non--decreasing, and we have $\widetilde\eta_i(0)=0$. From \eqref{eqs:sG_intineqs}, we see that for almost every $t\in[0,T)$, 
  \begin{equation*}
    \frac{d}{dt}\widetilde\eta_i(t) \leq A_i\widetilde\eta_i(t) + B_i\widetilde\eta_{1-i}(t) + \varphi_i(t) .
  \end{equation*}
  Then we obtain from Lemma~\ref{lem:G} that, for every $t\in[0,T)$ and $i\in\{0,1\}$, 
  \begin{equation*}
    \widetilde\eta_i(t) \leq \int_0^t e^{A_i(t-s)}\bigl(B_i\widetilde\eta_{1-i}(s)+\varphi_i(s)\bigr) ds \leq e^{A_iT}B_i\int_0^t\widetilde\eta_{1-i}(s)ds + \int_0^te^{A_i(t-s)}\varphi_i(s)ds .
  \end{equation*}
  Letting 
  \begin{equation*}
    \widehat B_i = e^{A_iT}B_i ,\qquad \widehat\varphi_i(t) = \int_0^te^{A_i(t-s)}\varphi_i(s)ds ,
  \end{equation*}
  we can write this as 
  \begin{equation} \label{eqs:sG_etatilde_firstestimates}
    \widetilde\eta_i(t) \leq \widehat B_i\int_0^t\widetilde\eta_{1-i}(s)ds + \widehat\varphi_i(t) ,\qquad i\in\{0,1\} .
  \end{equation}
  By monotonicity of $\widetilde\eta_{1-i}$, 
  \begin{equation*}
    \int_0^t\widetilde\eta_{1-i}(s)ds \leq T\widetilde\eta_{1-i}(t) ,
  \end{equation*}
  whence 
  \begin{equation*}
    \widetilde\eta_i(t) \leq \widehat B_iT\widetilde\eta_{1-i}(t) + \widehat\varphi_i(t) .
  \end{equation*}
  Inserting \eqref{eqs:sG_etatilde_firstestimates} for $\widetilde\eta_{1-i}(t)$ into this, we obtain the following inequalities that hold for every $t\in[0,T)$ and $i\in\{0,1\}$
  \begin{equation*}
    \widetilde\eta_i(t) \leq \widehat B_i\widehat B_{1-i} T\int_0^t\widetilde\eta_i(s)ds + \widehat\varphi_i(t) + \widehat B_iT\widehat\varphi_{1-i}(t) .
  \end{equation*}
  Setting 
  \begin{equation*}
    \widetilde A = \widehat B_0\widehat B_1 T ,\qquad \widetilde\varphi_i(t) = \widehat\varphi_i(t) + \widehat B_iT\widehat\varphi_{1-i}(t) ,
  \end{equation*}
  this reads
  \begin{equation} \label{eqs:sG_etatilde_estimates}
    \widetilde\eta_i(t) \leq \widetilde A \int_0^t \widetilde\eta_i(s)ds + \widetilde\varphi_i(t) ,\qquad i\in\{0,1\} .
  \end{equation}
  Defining for $i\in\{0,1\}$ the absolutely continuous, non--negative, non--decreasing functions 
  \begin{equation*}
    H_i(t) = \int_0^t\widetilde\eta_i(s)ds ,
  \end{equation*}
  we obtain for almost every $t\in[0,T)$ 
  \begin{equation*}
    \frac{d}{dt}H_i(t) \leq \widetilde A H_i(t) + \widetilde\varphi_i(t) .
  \end{equation*}
  Since $H_i(0)=0$, we obtain from Lemma~\ref{lem:G} for every $t\in[0,T)$ and $i\in\{0,1\}$ that 
  \begin{equation*}
    H_i(t) \leq \int_0^t e^{\widetilde A(t-s)}\widetilde\varphi_i(s)ds .
  \end{equation*}
  Since, by \eqref{eqs:sG_intineqs}, \eqref{eqs:sG_etatilde_estimates}, we have for $i\in\{0,1\}$, 
  \begin{equation*}
    \eta_i(t) \leq A_i\bigl( \widetilde A H_i(t) + \widetilde\varphi_i(t) \bigr) + B_i\bigl( \widetilde A H_{1-i}(t) + \widetilde\varphi_{1-i}(t) \bigr) + \varphi_i(t) ,
  \end{equation*}
  we obtain the desired result with the constants 
  \begin{equation*}
    \begin{split}
      c_i^i &= A_i+B_0B_1Te^{A_{1-i}T} , \\
      c_i^{1-i} &= B_i+A_iB_iTe^{A_iT} , \\
      d &= e^{(A_0+A_1)T}B_0B_1T , \\
      f^i &= A_i , \\
      g_i &= A_i ,
    \end{split}
  \end{equation*}
  where $i\in\{0,1\}$.
\end{proof}

\section{John's bounds on the first order quantities and the inverse density of characteristics} \label{sec:firstorder}

\subsection{\texorpdfstring{The evolution equations for the first order quantities~(cf.~\cite{john74})}{The evolution equations for the first order quantities}} \label{subsec:1storder_evoleqs}

In this subsection, we compute the evolution equations of some first order quantities along an $i^{\text{th}}$ characteristic.

So fix $i\in\{1,\ldots,N\}$. For $k\in\{1,\ldots,N\}$, let (see \eqref{eq:mt_wk}, \eqref{eq:mt_vi})
\begin{align}
\label{eq:wk}
  w^k &= \postsup{e}{\star}^k \frac{\partial}{\partial x}u , \\
\intertext{and, for the given $i$, let}
\label{eq:vi}
  v^i &= \postsup{e}{\star}^i\frac{\partial}{\partial z_i}u = \rho_iw^i .
\end{align}
Then, by \eqref{eq:wk}, 
\begin{align}
  \label{eq:dxu}
  \frac{\partial}{\partial x}u &= \sum_k \Bigl(\postsup{e}{\star}^k\frac{\partial}{\partial x}u\Bigr)e_k = \sum_k w^k e_k , \\
\intertext{and so, by \eqref{eqs:dccoords}, \eqref{eq:vi}, }
  \label{eq:dziu}
  \frac{\partial}{\partial z_i}u &= \rho_i\sum_kw^k e_k = v^i e_i + \rho_i\sum_{\stack k{k\neq i}}w^k e_k . 
\end{align}

From \eqref{eq:main_pde_plain} and \eqref{eqs:dccoords} we have
\begin{equation} \label{eq:dsiu_intrinsic}
  \frac{\partial}{\partial s_i} u = \lambda_i\frac{\partial}{\partial x}u + \frac{\partial}{\partial t}u = \bigl( \lambda_i\id -a \bigr)\frac{\partial}{\partial x} u ,
\end{equation}
whence, by \eqref{eq:dxu}, 
\begin{equation} \label{eq:dsiu}
  \frac{\partial}{\partial s_i}u = \sum_k \bigl( \lambda_i-\lambda_k \bigr) w^k e_k .
\end{equation}

It then follows from \eqref{eq:cjkl} that 
\begin{align}
\label{eq:estarjdsiaek}
  \postsup{e}{\star}^j\Bigl(\frac{\partial}{\partial s_i}a\Bigr)e_k &= \sum_l \postsup{e}{\star}^j(D_{e_l}a)\Bigl(\postsup{e}{\star}^l\frac{\partial}{\partial s_i}u\Bigr) e_k = \sum_l (\lambda_i-\lambda_l)\cull{cjkl}w^l , \\
\intertext{while, using \eqref{eq:dxu},}
\label{eq:estarjdxaek}
  \postsup{e}{\star}^j\Bigl(\frac{\partial}{\partial x} a\Bigr)e_k &= \sum_l \postsup{e}{\star}^j(D_{e_l}a)\Bigl(\postsup{e}{\star}^l\frac{\partial}{\partial x}u\Bigr) e_k = \sum_l \cull{cjkl}w^l .
\end{align}

Similarly, from \eqref{eq:Dellambdak}, we get, using \eqref{eq:dsiu}, respectively \eqref{eq:dxu}, \eqref{eq:dziu},
\begin{align}
\label{eq:dsilambdaj}
  \frac{\partial}{\partial s_i}\lambda_j &= \sum_k (\lambda_i-\lambda_k)\cull{cjjk}w^k , \\
\label{eq:dxlambdaj}
  \frac{\partial}{\partial x}\lambda_j &= \sum_k \cull{cjjk}w^k , \\
\label{eq:dzilambdaj}
  \frac{\partial}{\partial z_i}\lambda_j &= \rho_i\sum_k \cull{cjjk}w^k = \cull{cjji}v^i + \rho_i\sum_{\stack k{k\neq i}}\cull{cjjk}w^k.
\end{align}

In the same way, we recover with \eqref{eq:dsiu} from \eqref{eq:Delestarj} that 
\begin{equation}
\label{eq:dsiestarj}
  \frac{\partial}{\partial s_i}\postsup{e}{\star}^j = \sum_{\stack{k,l}{k\neq j}} \frac{\lambda_i-\lambda_l}{\lambda_j-\lambda_k}w^l\bigl(\cull{ckjl}\scprod{e_k,e_j}_0\postsup{e}{\star}^j+\cull{cjkl}\postsup{e}{\star}^k\bigr) . \\
\end{equation}

Finally, we get, using \eqref{eq:rhoi}, \eqref{eq:Xi_ivp}, \eqref{eq:dzilambdaj}, 
\begin{equation} \label{eq:dsirhoi}
  \frac{\partial}{\partial s_i}\rho_i = \frac{\partial}{\partial s_i}\Bigl(\frac{\partial}{\partial z_i}X_i\Bigr) = \frac{\partial}{\partial z_i}\Bigl(\frac{\partial}{\partial s_i}X_i\Bigr) = \frac{\partial}{\partial z_i}\lambda_i = \rho_i\sum_k \cull{ciik}w^k = \cull{ciii}v^i + \Bigl( \sum_{\stack m{m\neq i}}\cull{ciim}w^m \Bigr)\rho_i .
\end{equation}

These are actually not all the first order quantities. As we shall promptly see, $\frac{\partial}{\partial s_i}w^i$ and $\frac{\partial}{\partial s_i}v^i$ are as well of order one in the derivatives of $u$.

We assume in the following that $K\geq2$, although equations \eqref{eq:dsiwi}, \eqref{eq:dsiwi_fully_expanded}, \eqref{eq:dsivi} below remain valid for $K=1$. We start by considering $\frac{\partial}{\partial s_i}w^i$. We have from \eqref{eq:wk}
\begin{equation*}
  \frac{\partial}{\partial s_i}w^i = \Bigl(\frac{\partial}{\partial s_i}\postsup{e}{\star}^i\Bigr)\frac{\partial}{\partial x} u + \postsup{e}{\star}^i\biggl( \frac{\partial}{\partial s_i}\Bigl(\frac{\partial}{\partial x} u\Bigr) \biggr) .
\end{equation*}
But, from \eqref{eqs:dccoords}, and using \eqref{eq:main_pde_plain}, we have 
\begin{equation*}
  \frac{\partial}{\partial s_i}\Bigl(\frac{\partial}{\partial x} u\Bigr) = \lambda_i\frac{\partial^2}{\partial x^2} u + \frac{\partial^2}{\partial x\partial t} u = \lambda_i\frac{\partial^2}{\partial x^2} u - \frac{\partial}{\partial x}\Bigl( a\frac{\partial}{\partial x} u \Bigr) = \bigl( \lambda_i\id - a \bigr) \frac{\partial^2}{\partial x^2} u - \Bigl(\frac{\partial}{\partial x} a\Bigr)\frac{\partial}{\partial x} u ,
\end{equation*}
whence 
\begin{equation*}
  \postsup{e}{\star}^i\biggl(\frac{\partial}{\partial s_i}\Bigl(\frac{\partial}{\partial x}u\Bigr)\biggr) = - \postsup{e}{\star}^i\biggl(\Bigl(\frac{\partial}{\partial x}a\Bigr)\frac{\partial}{\partial x}u\biggr) .
\end{equation*}
Therefore, by \eqref{eq:dsiestarj}, as well as \eqref{eq:cjkl} and \eqref{eq:dxu},
\begin{align*}
  \frac{\partial}{\partial s_i}w^i &= \sum_{\stack{k,l}{k\neq i}} \frac{\lambda_i-\lambda_l}{\lambda_i-\lambda_k}w^l\bigl(\cull{ckil}\scprod{e_k,e_i}_0w^i+\cull{cikl}w^k\bigr) - \sum_{k,l}\cull{cikl}w^kw^l \\
  &= -\cull{ciii}(w^i)^2 - \biggl(\sum_{\stack l{l\neq i}} \Bigl( \cull{ciil} + \cull{cili} - \sum_{\stack k{k\neq i}} \frac{\lambda_i-\lambda_l}{\lambda_i-\lambda_k}\cull{ckil}\scprod{e_k,e_i}_0 \Bigr) w^l\biggr)w^i \\
  &\phantom{=\ } {} + \sum_{\stack{k,l}{k\neq i,l\neq i}} \Bigl( \myunderbrace{\frac{\lambda_i-\lambda_l}{\lambda_i-\lambda_k} - 1}={\makebox[0pt][l]{\tiny $\frac{\lambda_k-\lambda_l}{\lambda_i-\lambda_k}$}} \Bigr) \cull{cikl} w^kw^l
.
\end{align*}
Thus, defining the coefficients 
\begin{equation} \label{eq:gamma_sym}
  \cull{\gamma ilm}=\cull{\gamma ilm}(u)=\cull{\gamma iml}
\end{equation} as in \cite{john74} (notice, however, the difference in the last term in \eqref{eq:gammaiim} stemming from the fact that we used the normalisation condition 
$\norm{e_i}_0=1$, whereas John uses $\abs{\postsup{e}{\star}^i}=1$), 
\begin{subequations} \label{eqs:gamma}
\begin{align}
\label{eq:gammaiii}
  \cull{\gamma iii} &= -\cull{ciii} , \\
\label{eq:gammaiim}
  \cull{\gamma iim} &= \frac12 \Biggl( - \cull{cimi} - \cull{ciim} + \sum_{\stack l{l\neq i}} \frac{\lambda_i-\lambda_m}{\lambda_i-\lambda_l}\cull{clim}\scprod{e_l,e_i}_0 \Biggr) , && m\neq i , \\
\label{eq:gammailm}
  \cull{\gamma ilm} &= \frac12 \biggl( \frac{\lambda_l-\lambda_m}{\lambda_i-\lambda_l}\cull{cilm} + \frac{\lambda_m-\lambda_l}{\lambda_i-\lambda_m}\cull{ciml} \biggr) , && l\neq i, m\neq i ,
\end{align}
\end{subequations}
we can write 
\begin{equation} \label{eq:dsiwi}
  \frac{\partial}{\partial s_i}w^i = \sum_{l,m} \cull{\gamma ilm} w^lw^m .
\end{equation}
Noticing that, by \eqref{eq:gammailm}, 
\begin{equation} \label{eq:gammaill_is_zero}
  \cull{\gamma ill}=0 , \qquad\text{ if $l\neq i$,}
\end{equation}
and using \eqref{eq:gamma_sym}, \eqref{eq:gammaiii}, we can write this as
\begin{align}
\nonumber 
  \frac{\partial}{\partial s_i}w^i &= -\cull{ciii}(w^i)^2 + \sum_{\stack{l,m}{l\neq m}} \cull{\gamma ilm}w^lw^m \\
\label{eq:dsiwi_fully_expanded}
  &= -\cull{ciii}(w^i)^2 + \Bigl(\sum_{\stack m{m\neq i}}2\cull{\gamma iim}w^m\Bigr)w^i + \sum_{\stack{l,m}{\stack{l\neq i,m\neq i}{l\neq m}}} \cull{\gamma ilm}w^lw^m .
\end{align}
We take note that the right--hand side is indeed of first order in derivatives of $u$, as announced.

It is now easy to compute $\frac{\partial}{\partial s_i}v^i$. We obtain from \eqref{eq:vi}, \eqref{eq:dsirhoi} and \eqref{eq:dsiwi_fully_expanded} 
\begin{align*}
  \frac{\partial}{\partial s_i}v^i &= \frac{\partial}{\partial s_i}\bigl(\rho_iw^i\bigr) = \Bigl(\frac{\partial}{\partial s_i}\rho_i\Bigr)w^i + \rho_i\Bigl(\frac{\partial}{\partial s_i}w^i\Bigr) \\
    &= \cull{ciii}v^iw^i + \Bigl(\sum_{\stack m{m\neq i}}\cull{ciim}w^m\Bigr)\rho_iw^i \\
    &\phantom{=\ } {} - \rho_i\bigl(\cull{ciii}(w^i)^2\bigr) + \rho_i\Bigl(\sum_{\stack m{m\neq i}} 2\cull{\gamma iim}w^m\Bigr)w^i + \rho_i\Bigl(\sum_{\stack{l,m}{\stack{l\neq i,m\neq i}{l\neq m}}} \cull{\gamma ilm}w^lw^m\Bigr) ,
\end{align*}
which, by \eqref{eq:vi}, immediately simplifies to 
\begin{equation} \label{eq:dsivi}
  \frac{\partial}{\partial s_i}v^i = \Bigl(\sum_{\stack m{m\neq i}}\bigl(2\cull{\gamma iim}+\cull{ciim}\bigr)w^m\Bigr)v^i + \Bigl(\sum_{\stack{l,m}{\stack{l\neq i,m\neq i}{l\neq m}}} \cull{\gamma ilm}w^lw^m \Bigr) \rho_i .
\end{equation}
Again, we take note that the right--hand side is of first order.

\subsection{Some a--priori assumptions on the initial data} \label{subsec:initdataassumptions}

Seeking $C^{K}$--solutions $u$ of the Cauchy problem \eqref{eq:main_cp} with initial condition (cf.~\eqref{eq:f})
\begin{equation} \label{eq:main_ic}
  u(\argplaceholder,0) = f = \vartheta f_0 ,
\end{equation}
where $f_0\in C^{K,1}_c\bigl(\rz;\nball[N]_\delta(0)\bigr)$ has support in $I_0=[-1,1]$, we first have to restrict ourselves to $\vartheta\in(0,1)$ so that $f$ has range in $\nball[N]_\delta(0)$. Then, since we assumed that $a\in C^{\max\{2,K\}}$, the quantities $a$, $\lambda_i$, $\postsup{e}{\star}^i$, $e_i$, $\cull{cikl}$ are defined and at least $C^1$ composed with $f$. We have (cf.~\cite{john74})
\begin{equation} \label{eq:fdf_are_Otheta}
  f = \mathcal O(\vartheta) , \qquad f' = \frac{d}{dz}f = \mathcal O(\vartheta) ,
\end{equation}
and thus 
\begin{equation} \label{eq:lambdaiestariei_of_f_differ_by_Otheta}
  \lambda_i(f) - \lambda_i(0) = \mathcal O(\vartheta) , \quad \postsup{e}{\star}^i(f) - \postsup{e}{\star}^i(0) = \mathcal O(\vartheta) , \quad e_i(f) - e_i(0) = \mathcal O(\vartheta) ,
\end{equation}
while 
\begin{equation} \label{eq:lambdaiestariei_are_Oone}
  \lambda_i = \mathcal O(1) , \qquad \postsup{e}{\star}^i = \mathcal O(1) , \qquad e_i = \mathcal O(1) .
\end{equation}
We wish to stress here that we compute lengths of tangent (co--)vectors \emph{not} with respect to the Euclidean norm, but with respect to the appropriate norm induced by the scalar product $\scprod{\argplaceholder,\argplaceholder}_0$ with respect to which the basis $\{e_i(0)\}_{i\in\{1,\ldots,N\}}$ is orthonormal (cf.~\eqref{eq:ei0_scprod}). %
We further obtain that 
\begin{equation*}
  \cull{ciii}(f) - \cull{ciii}(0) = \mathcal O(f) = \mathcal O(\vartheta) ,
\end{equation*}
so that, by \eqref{eq:gen_nl_cond_unif}, it suffices to choose $\vartheta$ small enough so that
\begin{equation} \label{eq:ciii_stays_close}
  (1+\varepsilon)\cull{ciii}(0) < \cull{ciii}(f) < (1-\varepsilon)\cull{ciii}(0) < 0 .
\end{equation}
Finally, by \eqref{eq:wk}, \eqref{eq:fdf_are_Otheta} and \eqref{eq:lambdaiestariei_are_Oone}, we have for every $z\in\rz$, 
\begin{equation} \label{eq:order_wi_initially}
  w^i(z,0) = \postsup{e}{\star}^i f'(z) = \mathcal O(f') = \mathcal O(\vartheta) .
\end{equation}

Let us introduce now the following quantities:
\begin{align}
\label{eq:W0W0p}
  W_0 &= \max_i\sup_z \lrabs{w^i(z,0)} 
         , & W_0^+ &= \max_i\sup_z w^i(z,0) %
         , \\
\label{eq:W00W00p}
  W_{0,0} &= \max_i\sup_z \lrabs{ \postsup{e}{\star}^i(0)f_0'(z) } 
             , & W_{0,0}^+ &= \max_i\sup_z \bigl( \postsup{e}{\star}^i(0)f_0'(z) \bigr) %
             .
\end{align}
We trivially have $0\leq W_0^+\leq W_0$ and $0\leq W_{0,0}^+\leq W_{0,0}$. Moreover, \eqref{eq:order_wi_initially} implies that 
\begin{equation} \label{eq:W0_is_Otheta}
  W_0 = \mathcal O(\vartheta) .
\end{equation}
Also, we immediately infer from \eqref{eq:lambdaiestariei_are_Oone} (using in the first line also \eqref{eq:dxu}) that 
\begin{align}
\label{eq:df_is_OW0}
  f' &= \sum_i w^i(\argplaceholder,0) e_i(f) = \mathcal O(W_0) \\
\intertext{and}
\label{eq:df0_is_OW00}
  f_0' &= \sum_i \bigl(\postsup{e}{\star}^i(0)f_0'\bigr) e_i(0) = \mathcal O(W_{0,0}) .
\end{align}

Now, since $f_0$ is $C^{K}$ and has support in $I_0=[-1,1]$, we must have, for all $i\in\{1,\ldots,N\}$, 
\begin{equation} \label{eq:intestaridf0_is_zero}
  0 = \postsup{e}{\star}^i(0)\biggl( \underbrace{\int_{-1}^1 f_0'(z)dz}_{\phantom{0}=0} \biggr) = \int_{-1}^1 \bigl( \postsup{e}{\star}^i(0)f_0'(z) \bigr) dz .
\end{equation}
Therefore, if $f_0$ is non--constant (i.e., if $f_0$ is non--zero, since, by continuity, any other constant would contradict $f_0$ being supported in $I_0$), there must be an index $i\in\{1,\ldots,N\}$ and a point in $I_0$ where $\postsup{e}{\star}^i(0)f_0'$ is strictly positive. We conclude 
\begin{equation} \label{eq:W00p_pos}
  0 < W_{0,0}^+ \leq W_{0,0} .
\end{equation}
We now establish an explicit lower bound for $W_{0,0}^+$. By \eqref{eq:W00W00p}, there is an $i_0$ and a $z_0$ such that $\lrabs{\postsup{e}{\star}^{i_0}(0)f_0'(z_0)} = W_{0,0}$. If $\postsup{e}{\star}^{i_0}(0)f_0'(z_0)\geq0$, we obtain from \eqref{eq:W00W00p} that 
\begin{equation} \label{eq:W00p_lwr_bd_poscase}
  W_{0,0}^+ \geq \postsup{e}{\star}^{i_0}(0)f_0'(z_0) = W_{0,0} .
\end{equation}
So let us assume that $\postsup{e}{\star}^{i_0}(0)f_0'(z_0)<0$, and set 
\begin{equation} \label{eq:epsilon0}
  \epsilon_0 = \frac{W_{0,0}}L ,
\end{equation}
where 
\begin{equation} \label{eq:L}
  L = \sup_{\stack{z',z''\in I_0}{z'\neq z''}} \frac{\bignorm{f_0'(z')-f_0'(z'')}_0}{\abs{z'-z''}} 
\end{equation}
is the Lipschitz constant of $f_0'$. We then have, following \cite{john74}, 
\begin{align*}
  W_{0,0} &\ \ =\ \ \ -\postsup{e}{\star}^{i_0}(0)f_0'(z_0) = -\frac1{2\epsilon_0} \int_{\lrabs{z-z_0}<\epsilon_0} \postsup{e}{\star}^{i_0}(0)f_0'(z_0) dz \\
  &\ \ =\ \ \ \frac1{2\epsilon_0} \int_{\lrabs{z-z_0}<\epsilon_0} \postsup{e}{\star}^{i_0}(0)\bigl(f_0'(z)-f_0'(z_0)\bigr) dz + \frac1{2\epsilon_0}\int_{\lrabs{z-z_0}\geq\epsilon_0} \postsup{e}{\star}^{i_0}(0)f_0'(z) dz \\
  &\ \ \phantom{=\ }\ \ {} - \frac1{2\epsilon_0} \underbrace{\int_{I_0}\postsup{e}{\star}^{i_0}(0)f_0'(z) dz}_{\phantom0\stackrel{\eqref{eq:intestaridf0_is_zero}}=0} \\
  &\ \ \stackrel{\makebox[0pt][c]{\scriptsize \eqref{eq:L},\eqref{eq:W00W00p}}}{\leq\ }\ \ \frac{2L}{2\epsilon_0}\underbrace{\bignorm{\postsup{e}{\star}^{i_0}(0)}_0}_{\phantom1=1}\int_0^{\epsilon_0} y dy + \frac1{2\epsilon_0}\int_{I_0} W_{0,0}^+ dz \stackrel{\eqref{eq:epsilon0}}\leq \frac12W_{0,0} + \frac L{W_{0,0}} W_{0,0}^+ .
\end{align*}
We conclude that 
\begin{equation} \label{eq:W00p_lwr_bd_negcase}
  W_{0,0}^+ \geq \frac{W_{0,0}^2}{2L} .
\end{equation}
Since $W_{0,0}\leq L$, for $f_0'(\pm 1)=0$, the right--hand side of \eqref{eq:W00p_lwr_bd_negcase} is smaller than the right--hand side of \eqref{eq:W00p_lwr_bd_poscase}, so that \eqref{eq:W00p_lwr_bd_negcase} is valid for either sign of $\postsup{e}{\star}^{i_0}(0)f_0'(z_0)$.

We next show that \eqref{eq:W00p_lwr_bd_negcase} entails that we can estimate $W_0$ in terms of $W_0^+$. By \eqref{eq:lambdaiestariei_of_f_differ_by_Otheta} and \eqref{eq:df0_is_OW00}, we have for all $i$ 
\begin{equation} \label{eq:estarifdf0_close_to_estarizerodf0}
\bigl( \postsup{e}{\star}^i(f)-\postsup{e}{\star}^i(0) \bigr)f_0' = \bigl( \postsup{e}{\star}^i(\vartheta f_0) - \postsup{e}{\star}^i(0) \bigr)f_0' = \mathcal O(\vartheta W_{0,0}) .
\end{equation}
Consequently, choosing $\vartheta$ small enough, we can ensure that 
\begin{equation*}
  \Bigl| \bigl(\postsup{e}{\star}^i(\vartheta f_0) - \postsup{e}{\star}^i(0)\bigr)f_0' \Bigr| \leq \frac12 W_{0,0}^+ , \qquad \forall i\in\{0,\ldots,N\} ,
\end{equation*}
so that, if we pick $i^+\in\{1,\ldots,N\}$ and $z^+\in I_0$ such that $\postsup{e}{\star}^{i^+}(0)f_0'(z^+) = W_{0,0}^+$ (cf.~\eqref{eq:W00W00p}), we get 
\begin{equation*}
  \postsup{e}{\star}^{i^+}\bigl(\vartheta f_0(z^+)\bigr)f_0'(z^+) = W_{0,0}^+ + \Bigl(\postsup{e}{\star}^{i^+}\bigl(\vartheta f_0(z^+)\bigr) - \postsup{e}{\star}^{i^+}(0)\Bigr)f_0'(z^+) \geq W_{0,0}^+ - \frac12 W_{0,0}^+ = \frac12 W_{0,0}^+ .
\end{equation*}
But then, since we have $W_0^+\geq \postsup{e}{\star}^{i^+}(f)f'$ on all $I_0$ (cf.~\eqref{eq:W0W0p}), we infer that 
\begin{equation} \label{eq:W0p_pos}
  W_0^+ \geq \postsup{e}{\star}^{i^+}\bigl(\vartheta f_0(z^+)\bigr)\bigl(\vartheta f_0'(z^+)\bigr) \geq \frac\vartheta2 W_{0,0}^+ > 0 .
\end{equation}
On the other hand, if $\overline i$ and $\overline z$ are such that $\lrabs{\postsup{e}{\star}^{\overline i}\bigl(f(\overline z)\bigr)f'(\overline z)}=W_0$, we recover from \eqref{eq:main_ic}, \eqref{eq:W00W00p}, \eqref{eq:estarifdf0_close_to_estarizerodf0}, that 
\begin{align*}
  W_0 &= \lrabs{ \postsup{e}{\star}^{\overline i}\bigl(\vartheta f_0(\overline z)\bigr)\bigl(\vartheta f_0'(\overline z)\bigr) } \leq \vartheta\lrabs{ \postsup{e}{\star}^{\overline i}(0)f_0'(\overline z) } + \vartheta\lrabs{ \Bigl(\postsup{e}{\star}^{\overline i}\bigl(\vartheta f_0(\overline z)\bigr)-\postsup{e}{\star}^{\overline i}(0)\Bigr)f_0'(\overline z) } \\
    &= \mathcal O(\vartheta W_{0,0} + \vartheta^2W_{0,0}) = \mathcal O( \vartheta W_{0,0} ).
\end{align*}
We conclude, using \eqref{eq:W0p_pos}, \eqref{eq:W00p_lwr_bd_negcase}, that 
\begin{equation*}
  \frac{W_0}{W_0^+} = \mathcal O\biggl( \frac{\vartheta W_{0,0}}{\vartheta W_{0,0}^+} \biggr) = \mathcal O(1) ,
\end{equation*}
i.e., 
\begin{equation} \label{eq:W0_is_OW0p}
  W_0 = \mathcal O(W_0^+) .
\end{equation}

\subsection{John's Theorem --- the bounds} \label{subsec:johns_thm}

In this subsection, we recover the bounds appearing in the proof of the main theorem of F.~John (\cite[Section~2]{john74}) following closely his argumentation. We assume that, for some $T>0$, $u\in C^{K}\bigl(\rz\times[0,T];\nball[N]_\delta(0)\bigr)$ is a solution of \eqref{eq:main_cp}, where $a$ and $\delta$ are as in Theorem~\ref{thm:main_thm}, and $\vartheta$ is small enough so that the estimates of Subsection~\ref{subsec:initdataassumptions} hold.

We start by considering 
\begin{equation} \label{eq:Wt}
  W(t) = \max_i\sup_{\stack{(x',t')}{0\leq t'\leq t}} \lrabs{ w^i(x',t') }
\end{equation}
on $[0,T]$, where $w^i(x,t)=w^i\bigl(u(x,t)\bigr)$ is defined by \eqref{eq:wk}. Notice that $W$ is non--decreasing in $t$ (it is actually also continuous, since the $w^i$'s are $C^{K-1}$ and have bounded support in $x$), and that $W(0)=W_0$. From \eqref{eq:dsiwi}, we have for all $i\in\{1,\ldots,N\}$ and every $x\in\rz$ that 
\begin{equation} \label{eq:dsiwiabs_initialestimate}
  \frac{\partial}{\partial s_i}\lrabs{w^i(x,t)} \leq \lrabs{\frac{\partial}{\partial s_i}w^i(x,t)} = \lrabs{ \sum_{l,m} \cull{\gamma ilm}w^lw^m } \leq \Gamma W(t)^2 , \qquad \text{ for almost every } t\in[0,T] ,
\end{equation}
where 
\begin{equation*}
  \Gamma = \sup_{v\in\nball[N]_\delta(0)}\sum_{i,l,m}\lrabs{\cull{\gamma ilm}(v)} .
\end{equation*}
We then apply the following standard Gronwall--type argument (cf., e.g., \cite[\S~XI.13]{mpf91}) to obtain that 
\begin{equation} \label{eq:wiabs_initialestimate}
  \lrabs{w^i(x,t)} \leq Y(t)
\end{equation}
for all $t\in[0,T]$ for which the solution $Y(t)$ of the initial value problem 
\begin{equation} \label{eq:Y_ivp}
  \left\{ \begin{aligned}
    \frac{d}{dt}Y &= \Gamma Y^2 , \\
    Y(0) &= W_0 ,
  \end{aligned}\right.
\end{equation}
is defined. Indeed, let $Z$ be a solution of the initial value problem 
\begin{equation*}
  \left\{ \begin{aligned}
    \frac{d}{dt}Z &= \Gamma \bigl( \max\{W,Z\} \bigr)^2 , \\
    Z(0) &= W_0 ,
  \end{aligned} \right.
\end{equation*}
and assume that there is a time $t_1$ for which $W(t_1)>Z(t_1)$. Then there is an $i_1\in\{1,\ldots,N\}$ and a $z_1\in\rz$, such that 
\begin{equation} \label{eq:Wt1gtZt1}
  \lrabs{w^{i_1}\bigl(X_{i_1}(z_1,t_1),t_1\bigr)} = W(t_1) > Z(t_1) .
\end{equation}
Let $\underline t=\inf\bigl\{ t \bigm| W(s)>Z(s),\forall s\in[t,t_1] \bigr\}$. Then, necessarily, 
\begin{equation} \label{eq:Wt0isZt0}
W(\underline t)=Z(\underline t) .
\end{equation}
Moreover, on $[\underline t,t_1]$, we have $\max\{W,Z\}=W$, so that 
\begin{equation*}
  Z(t_1)-Z(\underline t) = \int_{\underline t}^{t_1} \Gamma W(s)^2 ds ,
\end{equation*}
or
\begin{equation} \label{eq:Zt0plusintisZt1}
  Z(\underline t) + \int_{\underline t}^{t_1} \Gamma W(s)^2 ds = Z(t_1) .
\end{equation}
On the other hand, by integrating \eqref{eq:dsiwiabs_initialestimate} for $i_1$ along the $i_1^{\text{th}}$ characteristic $\mathcal C_{i_1}(z_1)$ passing through $\bigl(X_{i_1}(z_1,t_1),t_1\bigr)$, we have 
\begin{equation*}
  \Bigabs{w^{i_1}\bigl(X_{i_1}(z_1,t_1),t_1\bigr)} - \Bigabs{w^{i_1}\bigl(X_{i_1}(z_1,\underline t),\underline t\bigr)} \leq \int_{\underline t}^{t_1} \Gamma W(s)^2 ds .
\end{equation*}
This means that, using \eqref{eq:Wt}, \eqref{eq:Wt0isZt0} and \eqref{eq:Zt0plusintisZt1}, we get 
\begin{align*}
  \lrabs{w^{i_1}\bigl(X_{i_1}(z_1,t_1),t_1\bigr)} &\leq \lrabs{w^{i_1}\bigl(X_{i_1}(z_1,\underline t),\underline t\bigr)} + \int_{\underline t}^{t_1} \Gamma W(s)^2 ds \\
    &\leq W(\underline t) + \int_{\underline t}^{t_1} \Gamma W(s)^2 ds = Z(\underline t) + \int_{\underline t}^{t_1} \Gamma W(s)^2 ds = Z(t_1).
\end{align*}
But this contradicts \eqref{eq:Wt1gtZt1}, and we must have $W(t)\leq Z(t)$ for all $t\in[0,T]$ for which $Z$ is defined. But then $Z$ is actually a solution of \eqref{eq:Y_ivp}, which establishes \eqref{eq:wiabs_initialestimate}.

Now, \eqref{eq:Y_ivp} has the solution 
\begin{equation} \label{eq:Yt}
  Y(t) = \frac{W_0}{1-\Gamma W_0t} , \qquad t\in\left[0,\frac1{\Gamma W_0}\right) .
\end{equation}
Recalling from \eqref{eq:t0} that the time $t_0$ after which the characteristic strips $\mathcal R_i$ are separated is $\mathcal O(1)$, so that, by \eqref{eq:W0_is_Otheta}, 
\begin{equation*}
  \Gamma W_0 t_0 = \mathcal O(W_0) = \mathcal O(\vartheta) ,
\end{equation*}
we see that \eqref{eq:wiabs_initialestimate} implies 
\begin{equation} \label{eq:wiabs_t0estimate}
  \lrabs{w^i(x,t)} \leq (1+\varepsilon) W_0 , \qquad \forall x\in\rz ,\; \forall t\in[0,t_0] ,
\end{equation}
provided that $\vartheta$ is small enough. In other words, using \eqref{eq:W0_is_Otheta},
\begin{equation} \label{eq:Wt_t0estimate}
  W(t) = \mathcal O(W_0) = \mathcal O(\vartheta) , \qquad \forall t\in[0,t_0] .
\end{equation}

Let us now introduce the following time--dependent bounds defined on $[0,T]$:
\begin{subequations} \label{eqs:VSJU}
\begin{align}
\label{eq:Vt}
    V(t) &= \max_i\sup_{\stack{(x',t')\not\in \mathcal R_i}{0\leq t'\leq t}} \lrabs{ w^i(x',t') } , \\
\label{eq:St}
    S(t) &= \max_i\sup_{\stack{(z_i',s_i')}{\stack{z_i'\in I_0}{0\leq s_i'\leq t}}} \rho_i(z_i',s_i') , \\
\label{eq:Jt}
    J(t) &= \max_i\sup_{\stack{(z_i',s_i')}{\stack{z_i'\in I_0}{0\leq s_i'\leq t}}} \lrabs{ v^i(z_i',s_i') } , \\
\label{eq:Ut}
    U(t) &= \sup_{\stack{(x',t')}{0\leq t'\leq t}} \lrabs{ u(x',t') } .
\end{align}
\end{subequations}
Like $W(t)$ (see \eqref{eq:Wt}), these quantities are non--decreasing in $t$ (and, actually, also continuous). By \eqref{eq:rhoi}, \eqref{eq:Xi_ivp}, we see that, initially, 
\begin{equation} \label{eq:rhoi_initially}
  \rho_i(z_i,0)=1 ,
\end{equation}
and thus, by \eqref{eq:vi}, that 
\begin{equation} \label{eq:vi_initially}
  v^i(z_i,0) = w^i(z_i,0) ,
\end{equation}
so that we obtain, using that $u(\argplaceholder,0)=f(\argplaceholder)\in\nball[N]_\delta(0)$ is supported in $I_0$,
\begin{subequations} \label{eqs:VSJU_initially}
\begin{align}
\label{eq:V0}
  V(0) &= 0 , \\
\label{eq:S0}
  S(0) &= 1 , \\
\label{eq:J0}
  J(0) &= W_0 , \\
\label{eq:U0}
  U(0) &\leq \delta , \qquad\text{ and, by \eqref{eq:fdf_are_Otheta}, }\qquad U(0) = \mathcal O(\vartheta) . 
\end{align}
\end{subequations}
Next, we want to estimate these quantities on $[0,t_0]$. For $V$, we make use of \eqref{eq:dsiwiabs_initialestimate} and the fact that, for any $(x',t')\not\in \mathcal R_i$ the corresponding characteristic coordinates $(z_i',s_i')=\bigl(z_i'(x',t'),s_i'(x',t')\bigr)$ fulfil $z_i'\not\in I_0$ so that $w^i(z_i',0)=0$. Then we have, using \eqref{eq:t0}, \eqref{eq:Wt_t0estimate}, 
\begin{equation} \label{eq:Vt_t0estimate}
  V(t) = \mathcal O(W_0^2) , \qquad \forall t\in[0,t_0] .
\end{equation}
We then consider $\rho_i$. By the second to last equality in \eqref{eq:dsirhoi}, we see that 
\begin{equation} \label{eq:dsirhoi_is_OWrhoi}
  \frac{\partial}{\partial s_i}\rho_i(z_i,s_i) = \mathcal O\bigl( W(s_i)\rho_i(z_i,s_i) \bigr) ,
\end{equation}
from which we immediately conclude that, for as long as $u$ is defined and $C^K$ (i.e., for $t\in[0,T]$ according to our assumption), so that $W$ is bounded, we have, using the standard Gronwall--inequality (Lemma~\ref{lem:G}),  
\begin{equation*}
  \rho_i(z_i,t) \geq \underbrace{\rho_i(z_i,0)}_{\phantom{1}=1}\exp\Bigl(-\mathcal O\bigl(tW(t)\bigr)\Bigr) ,
\end{equation*}
and hence 
\begin{equation} \label{eq:rhoi_pos}
  \rho_i(\argplaceholder,t) > 0 , \qquad \forall i\in\{1,\ldots,N\} ,\; \forall t\in[0,T] .
\end{equation}
In particular, we have for $t\in[0,t_0]$, using \eqref{eq:t0}, \eqref{eq:Wt_t0estimate}, 
\begin{equation*}
  \rho_i(z_i,t) \geq \exp\bigl(-\mathcal O(\vartheta)\bigr) ,
\end{equation*}
so that we get for small enough $\vartheta$, 
\begin{equation} \label{eq:rhoi_t0estimate}
  \rho_i(\argplaceholder,t) \geq 1-\varepsilon > 0 , \qquad \forall t\in[0,t_0] .
\end{equation}
Similarly, \eqref{eq:dsirhoi_is_OWrhoi} implies that $\rho_i(z_i,t)\leq\exp\Bigl(\mathcal O\bigl(tW(t)\bigr)\Bigr)$, so that, by \eqref{eq:Wt_t0estimate}, \eqref{eq:St}, 
\begin{equation} \label{eq:St_bettert0estimate}
  S(t) \leq (1+\varepsilon) , \qquad \forall t\in[0,t_0] ,
\end{equation}
for sufficiently small $\vartheta$. In other words, 
\begin{equation} \label{eq:St_t0estimate}
  S(t) = \mathcal O(1) , \qquad \forall t\in[0,t_0] .
\end{equation}
Regarding $v^i$, we obtain from \eqref{eq:dsivi}, using \eqref{eq:vi}, \eqref{eq:Wt}, \eqref{eq:St}, that
\begin{equation*}
  \frac{\partial}{\partial s_i}v^i(z_i,s_i) = \mathcal O( \rho_iW(t)^2 ) = \mathcal O\bigl(S(t)W(t)^2\bigr) ,
\end{equation*}
which, by \eqref{eq:t0}, \eqref{eq:St_t0estimate}, \eqref{eq:Wt_t0estimate}, \eqref{eq:J0}, \eqref{eq:W0_is_Otheta}, implies after integrating and taking supremums that 
\begin{equation} \label{eq:Jt_t0estimate}
  J(t) = J(0) + \mathcal O( W_0^2 ) = W_0 + \mathcal O( \vartheta W_0 ) = \mathcal O(W_0) , \qquad \forall t\in[0,t_0] .
\end{equation}
Finally, since 
\begin{equation*}
  u(x,t) = \int_{X_N(-1,t)}^{x} \frac{\partial}{\partial x}u(x',t) dx' ,
\end{equation*}
we obtain from \eqref{eq:dxu}, 
\begin{equation} \label{eq:u_is_Osumintwi}
  \bigabs{u(x,t)} = \mathcal O \Biggl( \sum_i \int_{X_N(-1,t)}^{X_1(1,t)} \lrabs{w^i(x',t)} dx' \Biggr) .
\end{equation}
Then, by \eqref{eq:Ut}, \eqref{eq:extreme_chars}, \eqref{eq:Wt}, \eqref{eq:t0}, 
\begin{equation} \label{eq:Ut_t0estimate}
  U(t) = \mathcal O \biggl( \Bigl(2+\bigl(\lambda_1(0)-\lambda_N(0)\bigr)t\Bigr)W(t) \biggr) = \mathcal O( W_0 ) = \mathcal O(\vartheta) , \qquad \forall t\in[0,t_0] ,
\end{equation}
so that 
\begin{equation} \label{eq:Ut0_is_le_delta}
  U(t) \leq \delta , \qquad \forall t\in[0,t_0] ,
\end{equation}
for sufficiently small $\vartheta$.

Using \eqref{eq:W0_is_Otheta}, we conclude from \eqref{eq:Vt_t0estimate}, \eqref{eq:rhoi_t0estimate} and \eqref{eq:St_bettert0estimate}, and \eqref{eq:Jt_t0estimate}, respectively, that $V$, $S$, and $J$ stay close to their initial values \eqref{eqs:VSJU_initially}, as long as $t\in[0,t_0]$ and $\vartheta$ is suitably small. Also, under those assumptions, the solution $u$ remains in the $N$--ball $\nball[N]_{\delta}(0)$ where it lay initially.

We next consider times $t\in[t_0,T]$. Let $(x,t)\in \mathcal R_i$ for some $i$ be given. Then the last equality in \eqref{eq:dsirhoi} gives along the $i^{\text{th}}$ characteristic $\mathcal C_i(z_i)$ through $(x,t)$ that 
\begin{equation} \label{eq:dsirhoi_estimate}
  \frac{\partial}{\partial s_i}\rho_i(z_i,s_i) = \mathcal O\bigl( J(s_i) + V(s_i)S(s_i) \bigr) , \qquad \forall s_i\in[t_0,T] ,
\end{equation}
where we have used that, by \eqref{eq:RkRl_separated}, $\bigl(\mathcal R_m\bigcap \mathcal R_i\bigr)\bigcap\bigl\{(x',t')\bigm|t'\in[t_0,T]\bigr\}=\emptyset$, $\forall m\neq i$. Integrating, and using \eqref{eq:St_t0estimate}, we obtain 
\begin{equation*}
  \rho_i(z_i,t) = \mathcal O\bigl( 1 + tJ(t) + tV(t)S(t) \bigr) ,
\end{equation*}
so that, by varying $i$ and $x$ such that $(x,t)\in \mathcal R_i\bigcap\bigl\{(x',t')\bigm|t'\in[t_0,T]\bigr\}$, we have 
\begin{equation} \label{eq:St_testimate}
  S(t) = \mathcal O\bigl( 1 + tJ(t) + tV(t)S(t) \bigr) , \qquad \forall t\in[0,T] ,
\end{equation}
which is compatible with \eqref{eq:St_t0estimate}. Next, we argue in the same way for $v^i$. Fixing again $(x,t)\in \mathcal R_i$ for some $i$ and using \eqref{eq:dsivi}, we get 
\begin{equation} \label{eq:dsivi_estimate}
  \frac{\partial}{\partial s_i}v^i(z_i,s_i) = \mathcal O\bigl( V(s_i)J(s_i) + V(s_i)^2S(s_i) \bigr) , \qquad \forall s_i\in[t_0,t] .
\end{equation}
Integrating, using \eqref{eq:Jt_t0estimate}, and varying $i$ and $x$, we arrive at 
\begin{equation} \label{eq:Jt_testimate}
  J(t) = \mathcal O\bigl( W_0 + tV(t)J(t) + tV(t)^2S(t) \bigr) , \qquad \forall t\in[0,T] .
\end{equation}
From \eqref{eq:St_testimate} and \eqref{eq:Jt_testimate}, we see that it will be fundamental to estimate $V(t)$. So fix $i\in\{1,\ldots,N\}$ and assume that $(x,t)\not\in \mathcal R_i$ with $t\in[t_0,T]$. \eqref{eq:dsiwi} then gives along the $i^{\text{th}}$ characteristic $\mathcal C_i(z_i)$ passing through $(x,t)$, 
\begin{equation}
\label{eq:dsiwi_estimate}
  \frac{\partial}{\partial s_i} w^i\bigl( X_i(z_i,s_i),s_i \bigr) = \sum_{l,m} (\cull{\gamma ilm}w^lw^m)\bigl(X_i(z_i,s_i),s_i\bigr) .
\end{equation}
For the given $z_i$, let 
\begin{equation} \label{eq:pik}
  \Pi^{z_i}_k(t) = \Bigl\{ t'\in[0,t] \Bigm| \bigl(X_i(z_i,t'),t'\bigr) \in \mathcal R_k \Bigl\} , \qquad k\in\{1,\ldots,N\} .
\end{equation}
Clearly, $\Pi^{z_i}_i(t)=\emptyset$, since $z_i\not\in I_0$. Also, by definition \eqref{eq:t0} of $t_0$, so that \eqref{eq:RkRl_separated} holds, $\bigl(\Pi^{z_i}_l(t)\bigcap\Pi^{z_i}_m(t)\bigr)\bigcap [t_0,T] =\emptyset$. Therefore, using $z_i\not\in I_0$, whence $w^i(z_i,0)=0$, as well as \eqref{eq:gammaill_is_zero}, \eqref{eq:Vt}, \eqref{eq:Vt_t0estimate}, we obtain after integration along $\mathcal C_i(z_i)$ 
\begin{equation} \label{eq:wi_estimate}
  \lrabs{w^i(z_i,t)} = \mathcal O \Biggl( W_0^2 + tV(t)^2 + V(t)\sum_{\stack k{k\neq i}}\int_{\Pi^{z_i}_k(t)}\lrabs{w^k\bigl(X_i(z_i,t'),t'\bigr)}dt' \Biggr) .
\end{equation}
Observe that 
\begin{equation*} \tag{\ref{eq:gammaill_is_zero}}
  \cull{\gamma ill}=0 , \qquad\text{ if $l\neq i$,}
\end{equation*}
was crucial for this estimate. Now, for each $t'\in\Pi^{z_i}_k(t)$, we have $\bigl(X_i(z_i,t'),t'\bigr)\in \mathcal R_k$. Parametrising $\mathcal C_i(z_i)\bigcap\mathcal R_k$ by $y_k\in I^{z_i}_k(t)$ for some $I^{z_i}_k(t)\subset I_0$ (recall Subsection~\ref{subsec:bichar_coords}), we get from \eqref{eqs:dbccoords_d}, using that $y_i=z_i$ is constant along $\mathcal C_i(z_i)$, 
\begin{align}
\label{eq:intpikwk_is_OJ}
  \int_{\Pi^{z_i}_k(t)}\lrabs{ w^k\bigl(X_i(z_i,t'),t'\bigr) } dt' &= \int_{I^{z_i}_k(t)} \frac{\rho_k\bigl(y_k,t'(y_i,y_k)\bigr)}{\lrabs{\lambda_i-\lambda_k}}\Bigabs{ w^k\bigl(y_k,t'(y_i,y_k)\bigr) } dy_k \\
\nonumber
    &= \int_{I^{z_i}_k(t)} \frac{\bigabs{ v^k\bigl(y_k,t'(y_i,y_k)\bigr) }}{\lrabs{\lambda_i-\lambda_k}} dy_k = \mathcal O\bigl( J(t) \bigr) .
\end{align}
Consequently, we get from \eqref{eq:wi_estimate}, \eqref{eq:Vt} after varying $i$ and $x$ such that $(x,t)\not\in \mathcal R_i$, 
\begin{equation} \label{eq:Vt_testimate}
  V(t) = \mathcal O\bigl( W_0^2 + tV(t)^2 + V(t)J(t) \bigr) , \qquad t\in[0,T] .
\end{equation}

Finally, going back to \eqref{eq:u_is_Osumintwi} and sub--dividing the $x$--interval $\bigl[X_N(-1,t),X_1(1,t)\bigr]$ for $t\in[t_0,T]$ into regions where $(x,t)\in \mathcal R_i$ for some $i$ and regions where $(x,t)\not\in \bigcup_{k} \mathcal R_k$, we obtain with \eqref{eq:extreme_chars} 
\begin{equation*}
  \bigabs{u(x,t)} = \mathcal O\Bigl( \bigl( X_1(1,t)-X_N(-1,t) \bigr)V(t) + J(t) \Bigr) = \mathcal O\biggl( \Bigl( 2+\bigl(\lambda_1(0)-\lambda_N(0)\bigr) t \Bigr) V(t) + J(t) \biggr) ,
\end{equation*}
where we have used that, by \eqref{eqs:dccoords}, changing variables from $(x',t)$ to $(z_i',t)$ yields for constant $t$ 
\begin{equation*}
  \int_{X_i(-1,t)}^{X_i(1,t)} \lrabs{ w^i(x',t) } dx' = \int_{-1}^1 \lrabs{ w^i\bigl(X_i(z_i',t),t\bigr) } \rho_i(z_i',t) dz_i' = \int_{I_0} \lrabs{v^i(z_i',t)} dz_i' = \mathcal O\bigl( J(t) \bigr) .
\end{equation*}
Consequently, we obtain taking supremums 
\begin{equation} \label{eq:Ut_testimate}
  U(t) = \mathcal O\bigl( V(t) + tV(t) + J(t) \bigr) , \qquad t\in[0,T] ,
\end{equation}
where we have used that, for $t\in[0,t_0]$, \eqref{eq:Ut_t0estimate}, \eqref{eq:Wt}, \eqref{eq:vi}, \eqref{eq:rhoi_t0estimate} and \eqref{eq:Jt} allow us to estimate $U(t)$ by $J(t)$.

Summarising, we have from \eqref{eq:Vt_testimate}, \eqref{eq:St_testimate}, \eqref{eq:Jt_testimate} and \eqref{eq:Ut_testimate}, omitting the explicit $t$--de\-pend\-ence, 
\begin{subequations} \label{eqs:VSJU_testimates}
\begin{align}
\label{eq:V_testimate}
  V &= \mathcal O( W_0^2 + tV\cdot V + VJ ) , \\
\label{eq:S_testimate}
  S &= \mathcal O( 1 + tJ + tV\cdot S ) , \\
\label{eq:J_testimate}
  J &= \mathcal O( W_0 + tV\cdot J + tV\cdot VS ) , \\
\label{eq:U_testimate}
  U &= \mathcal O( V + tV + J ) ,
\end{align}
\end{subequations}
which are valid for all $C^K$--solutions $u$ of \eqref{eq:main_cp} for which $u\in\nball[N]_\delta(0)$ for all $t\in[0,T]$, provided $\vartheta$ is small enough.

We now perform a bootstrap argument. We let $t$ increase from zero so as to keep the following inequalities satisfied:
\begin{subequations} \label{eqs:bootstrap_assumptions}
\begin{gather}
\label{eq:t_bootstrap_assumption}
  tW_0^+ \leq \max_i \frac1{(1-\varepsilon)^4\abs{\cull{ciii}(0)}} , \\
\label{eqs:tV_J_V_U_bootstrap_assumptions}
  tV \leq \sqrt\vartheta , \qquad J \leq \sqrt\vartheta , \qquad V \leq \vartheta , \qquad U \leq \sqrt\vartheta .
\end{gather}
\end{subequations}
When $t=0$, the estimates \eqref{eqs:tV_J_V_U_bootstrap_assumptions} hold by \eqref{eqs:VSJU_initially}, \eqref{eq:W0_is_Otheta}, for all sufficiently small $\vartheta$. Notice also that the right--hand side of \eqref{eq:t_bootstrap_assumption} is a constant (depending only on $a$ and $\varepsilon$) which is finite, since we assumed the system to be genuinely non--linear. We then get, for sufficiently small $\vartheta$,
\begin{align*}
  V &= \mathcal O( W_0^2 + \sqrt\vartheta V + \sqrt\vartheta V ) &\Longrightarrow\qquad\qquad V &= \mathcal O( W_0^2 ) , \\
  S &= \mathcal O( 1 + tJ + \sqrt\vartheta S ) &\Longrightarrow\qquad\qquad S &= \mathcal O( 1 + tJ ) , \\
  J &= \mathcal O( W_0 + \sqrt\vartheta J + \sqrt\vartheta VS ) &\Longrightarrow\qquad\qquad J &= \mathcal O( W_0 + \sqrt\vartheta VS ) ,
\end{align*}
so that 
\begin{align*}
  tV &= \mathcal O( tW_0^2 ) , \\ 
  VS &= \mathcal O( V + tV\cdot J ) = \mathcal O( V + \sqrt\vartheta J ) .
\end{align*}
Then, using \eqref{eq:t_bootstrap_assumption}, \eqref{eq:W0_is_OW0p}, \eqref{eq:W0_is_Otheta}, we obtain 
\begin{align*}
  V &= \mathcal O(W_0^2) = \mathcal O(\vartheta^2) , \\
  tV &= \mathcal O( tW_0^+\cdot W_0 ) = \mathcal O( W_0 ) = \mathcal O( \vartheta ) , \\ 
  J &= \mathcal O( W_0 + \sqrt\vartheta V + \vartheta J ) \quad\Longrightarrow\quad J = \mathcal O( W_0 + \sqrt\vartheta W_0^2 ) = \mathcal O( W_0 ) = \mathcal O(\vartheta) .
\end{align*}
Finally, this gives 
\begin{equation*}
  U = \mathcal O( W_0^2 + W_0 + W_0 ) = \mathcal O( W_0 ) = \mathcal O( \vartheta ) ,
\end{equation*}
so that, indeed, $u$ stays in $\nball[N]_\delta(0)$, provided $\vartheta$ is small enough. So we have established that, as long as $t$ is so small as to keep the conditions \eqref{eqs:bootstrap_assumptions} fulfilled, we have the stronger estimates 
\begin{align}
\label{eqs:tV_J_V_U_bootstrap_results}
  tV &= \mathcal O(W_0)  & J &= \mathcal O(W_0)  & V &= \mathcal O(W_0^2)  & U &= \mathcal O(W_0) \\
\nonumber
     &= \mathcal O(\vartheta), &&= \mathcal O(\vartheta), &&= \mathcal O(\vartheta^2), &&= \mathcal O(\vartheta),
\end{align}
which, therefore, hold as long as \eqref{eq:t_bootstrap_assumption} does, the assumptions \eqref{eqs:tV_J_V_U_bootstrap_assumptions} being unnecessary. For later reference, we also list the following additional estimates that thus hold (using, as well, \eqref{eq:W0_is_OW0p} and \eqref{eq:W0_is_Otheta}):
\begin{equation} \label{eqs:other_bootstrap_results}
  \begin{split}
    S &= \mathcal O(1+tJ) = \mathcal O(1+tW_0) = \mathcal O(1+tW_0^+) = \mathcal O(1) , \\
    VS &= \mathcal O( V+tV\cdot J ) = \mathcal O( W_0^2 + \vartheta W_0 ) = \mathcal O( \vartheta W_0 ) = \mathcal O( \vartheta^2 ) .
  \end{split}
\end{equation}
\eqref{eqs:tV_J_V_U_bootstrap_results} and \eqref{eqs:other_bootstrap_results}, together with \eqref{eq:rhoi_pos}, establish statement \eqref{thm:main_thm_firstorder} of Theorem~\ref{thm:main_thm} (with the exception of the bounds on $\rho_i$ and $v^i$ outside $\mathcal R_i$, which are obtained in the next subsection).

We now focus our attention on the inverse density $\rho_i$ of the $i^{\text{th}}$ characteristics, thereby also establishing the second part of F.~John's theorem (\cite[Section~2]{john74}).

\subsection{The inverse density of characteristics} \label{subsec:rho}
In this subsection, we analyse more in detail the behaviour of the inverse density of characteristics $\rho_i$. We assume that $u\in C^{K}\bigl(\rz\times[0,t_\star);\nball[N]_\delta(0)\bigr)$ is a solution of the strictly hyperbolic and genuinely non--linear system \eqref{eq:main_pde_plain} with initial condition \eqref{eq:main_ic} satisfying the additional assumptions set out in Subsection~\ref{subsec:initdataassumptions}, where $t_\star$ is the maximal time of existence of $u$. Among other things, we will give an explicit estimate for the size of $t_\star$, as well as establish that $\rho_i$ has to vanish somewhere at that time. This then recovers the second part of F.~John's main theorem (\cite[Section~2]{john74}), since \eqref{eq:vi}, the bound on $J$ in \eqref{eqs:tV_J_V_U_bootstrap_results} and the lower bound on $v^i$ in \eqref{eq:vip_in_zp_pos_for_all_t} below imply that $w^i$ has to blow up as $t$ approaches $t_\star$.

Our starting point is the evolution equation~\eqref{eq:dsirhoi} of the inverse density $\rho_i$ along an $i^{\text{th}}$ characteristic, 
\begin{equation*} \tag{\ref{eq:dsirhoi}}
  \frac{\partial}{\partial s_i} \rho_i = \cull{ciii}v^i + \Bigl( \sum_{\stack m{m\neq i}} \cull{ciim}w^m \Bigr) \rho_i ,
\end{equation*}
which, given our sign convention \eqref{eq:gen_nl_cond_unif} ($\cull{ciii}< 0$), we rewrite as 
\begin{equation} \label{eq:dsirhoi_signed}
  \frac{\partial}{\partial s_i} \rho_i = -\abs{\cull{ciii}}v^i + \Bigl( \sum_{\stack m{m\neq i}} \cull{ciim}w^m \Bigr) \rho_i .
\end{equation}
We want to show, in order to obtain Theorem~\ref{thm:main_thm}~\eqref{thm:main_thm_tstar}, that there is an index $i^+\in\{1,\ldots,N\}$ and a point $z^+\in I_0=[-1,1]$ such that $\rho_{i^+}\bigl(z^+,t\bigr)$ tends to zero as $t\nearrow t_\star$, and that  
\begin{equation} \label{eq:tstar_range}
  t_\star \in \bigl[ \underline{T_0}, \overline{T_0} \bigr] ,
\end{equation}
where 
\begin{equation} \label{eq:T0}
  \underline{T_0} = \min_i \frac1{(1+\varepsilon)^3\abs{\cull{ciii}(0)}W_0^+} \qquad\text{ and }\qquad \overline{T_0} = \max_i \frac1{(1-\varepsilon)^4\abs{\cull{ciii}(0)}W_0^+} .
\end{equation}
Notice that $tW_0^+$ then fulfils \eqref{eq:t_bootstrap_assumption} for all $t\in[0,\overline{T_0}]$, so that the estimates \eqref{eqs:tV_J_V_U_bootstrap_results}, \eqref{eqs:other_bootstrap_results} hold for any $t\in[0,T]$, $T\leq \overline{T_0}$, for which we have a $C^{K}$ solution $u$ on $\rz\times[0,T]$.

First, let us define 
\begin{equation} \label{eq:alphat}
  \alpha(t) = \max_i \sup_{\stack{(z',s')}{\stack{z'\in I_0}{0\leq s'\leq t}}} \biggabs{ \sum_{\stack m{m\neq i}} \cull{ciim}\Bigl(u\bigl(X_i(z',s'),s'\bigr)\Bigr) w^m\bigl(X_i(z',s'),s'\bigr) } ,
\end{equation}
so that, by \eqref{eq:dsirhoi_signed}, 
\begin{equation*}
  -\abs{\cull{ciii}}v^i - \alpha \rho_i \leq \frac{\partial}{\partial s_i}\rho_i \leq -\abs{\cull{ciii}}v^i + \alpha \rho_i .
\end{equation*}
Then, if $v^i\geq0$, we have by \eqref{eq:ciii_stays_close} for small enough $\vartheta$, 
\begin{equation} \label{eq:dsirhoi_range_vinotneg}
  -(1+\varepsilon)\abs{\cull{ciii}(0)}v^i - \alpha \rho_i \leq \frac{\partial}{\partial s_i}\rho_i \leq -(1-\varepsilon)\abs{\cull{ciii}(0)}v^i + \alpha \rho_i .
\end{equation}
Now, by \eqref{eq:Wt_t0estimate}, \eqref{eq:Vt}, \eqref{eqs:tV_J_V_U_bootstrap_results}, \eqref{eq:W0_is_Otheta}, we have for all $t\in\bigl[0,\min\{t_\star,\overline{T_0}\}\bigr)$, 
\begin{equation*}
  \int_0^t \alpha(s) ds = \mathcal O(W_0+tV) = \mathcal O(\vartheta) ,
\end{equation*}
so that, for sufficiently small $\vartheta$, 
\begin{equation*}
  \exp \biggl( \int_0^t \alpha(s) ds \biggr) \leq (1+\varepsilon), \qquad\text{ implying }\qquad \exp \biggl( -\int_0^t \alpha(s) ds \biggr) \geq (1-\varepsilon) .
\end{equation*}
Consequently, by the standard Gronwall--inequality (Lemma~\ref{lem:G}), \eqref{eq:rhoi_initially} and \eqref{eq:dsirhoi_range_vinotneg} imply for sufficiently small $\vartheta$, and for as long as $v^i\geq 0$ along the $i^{\text{th}}$ characteristic starting at $z_i$, that 
\begin{multline} \label{eq:rhoi_range_vinotneg}
  (1-\varepsilon)\biggl( 1 - (1+\varepsilon)^2\abs{\cull{ciii}(0)}\int_0^t v^i(z_i,s) ds \biggr) \leq \rho_i(z_i,t) \\
    \leq (1+\varepsilon)\biggl( 1 - (1-\varepsilon)^2\abs{\cull{ciii}(0)}\int_0^t v^i(z_i,s) ds \biggr) .
\end{multline}
In particular, 
\begin{equation} \label{eq:rhoi_upr_bd_vinotneg}
\rho_i\leq(1+\varepsilon)
\end{equation}
as long as $v^i\geq 0$, provided $\vartheta$ is small enough.

Now recall \eqref{eq:dsivi}, 
\begin{equation*} \tag{\ref{eq:dsivi}}
  \frac{\partial}{\partial s_i}v^i = \Bigl( \sum_{\stack m{m\neq i}} \bigl( 2\cull{\gamma iim}+\cull{ciim} \bigr) w^m \Bigr) v^i + \Bigl( \sum_{\stack{l,m}{\stack{l\neq i,m\neq i}{l\neq m}}} w^lw^m \Bigr) \rho_i .
\end{equation*}
Letting 
\begin{equation} \label{eqs:beta12t}
  \begin{split}
    \beta_1(t) &= \max_i \sup_{\stack{(z',s')}{\stack{z'\in I_0}{0\leq s'\leq t}}} \biggabs{ \sum_{\stack m{m\neq i}} \bigl(2\cull{\gamma iim}+\cull{ciim}\bigr)\Bigl(u\bigl(X_i(z',s'),s'\bigr)\Bigr) w^m\bigl(X_i(z',s'),s'\bigr) } , \\
    \beta_2(t) &= \max_i \sup_{\stack{(z',s')}{\stack{z'\in I_0}{0\leq s'\leq t}}} \biggabs{ \sum_{\stack{l,m}{\stack{l\neq i,m\neq i}{l\neq m}}} \cull{\gamma ilm}\Bigl(u\bigl(X_i(z',s'),s'\bigr)\Bigr) w^l\bigl(X_i(z',s'),s'\bigr) w^m\bigl(X_i(z',s'),s'\bigr) } ,
  \end{split}
\end{equation}
we have, whenever $v^i\geq 0$, and using \eqref{eq:rhoi_upr_bd_vinotneg}, 
\begin{equation} \label{eq:dsivi_lwr_bd_vinotneg}
  \frac{\partial}{\partial s_i}v^i \geq - \beta_1 v^i - \beta_2 \rho_i \geq - \beta_1 v^i - (1+\varepsilon)\beta_2 .
\end{equation}
So, again by Gronwall and arguing as before, taking into account \eqref{eq:wiabs_t0estimate}, \eqref{eqs:tV_J_V_U_bootstrap_results}, \eqref{eq:W0_is_Otheta}, we get for sufficiently small $\vartheta$ that 
\begin{equation*}
  v^i(z_i,t) \geq (1-\varepsilon)\biggl( w^i(z_i,0) - (1+\varepsilon)^2\int_0^t \beta_2(s) ds \biggr) ,
\end{equation*}
provided that $v^i(z_i,s)\geq 0$, $\forall s\in[0,t]$. In particular, taking $i^+\in\{1,\ldots,N\}$ and $z^+\in I_0$ such that $v^{i^+}(z^+,0)=w^{i^+}(z^+,0)=W_0^+>0$ (recall \eqref{eq:gen_nl_cond}, \eqref{eq:vi_initially}, \eqref{eq:W0W0p}), we obtain 
\begin{equation*}
  v^{i^+}(z^+,t) \geq (1-\varepsilon)\biggl( W_0^+ - (1+\varepsilon)^2\int_0^t \beta_2(s) ds \biggr) .
\end{equation*}
Observing that, by \eqref{eq:wiabs_t0estimate}, \eqref{eqs:tV_J_V_U_bootstrap_results}, \eqref{eq:W0_is_Otheta}, \eqref{eq:W0_is_OW0p}, 
\begin{equation*}
  \int_0^t\beta_2(s)ds = \mathcal O( W_0^2 + tV^2 ) = \mathcal O( W_0^2 + \vartheta W_0^2 ) = \mathcal O( \vartheta W_0^+ ) ,
\end{equation*}
we obtain, for sufficiently small $\vartheta$, 
\begin{equation} \label{eq:vip_in_zp_pos_for_all_t}
  v^{i^+}(z^+,t) \geq (1-\varepsilon)\bigl( W_0^+ - \varepsilon W_0^+ \bigr) \geq (1-\varepsilon)^2 W_0^+ .
\end{equation}
In particular, $v^{i^+}(z^+,t) > 0$ for all $t\in\bigl[0,\min\{t_\star,\overline{T_0}\}\bigr)$, and we did not need to assume $v^{i^+}\geq 0$ along $\mathcal C_{i^+}(z^+)$. Also, going back to \eqref{eq:rhoi_range_vinotneg}, we have 
\begin{equation} \label{eq:rhoi_upr_bd_max}
  \rho_{i^+}(z^+,t) \leq (1+\varepsilon) \Bigl( 1 - (1-\varepsilon)^4\bigabs{\cull{c{i^+}{i^+}{i^+}}(0)} tW_0^+ \Bigr) , \qquad \forall t\in\bigl[ 0 , \min\{t_\star,\overline{T_0}\} \bigr) ,
\end{equation}
from which it is immediate that $\lim_{t\nearrow T_{i^+}} \rho_{i^+}(z^+,t) = 0$ for some 
\begin{equation} \label{eq:Tip_estimate}
  T_{i^+} \leq \frac1{(1-\varepsilon)^4\bigabs{\cull{c{i^+}{i^+}{i^+}}(0)}W_0^+} \leq \overline{T_0} .
\end{equation}
Since we already argued at the beginning of this subsection that $u$ cannot exist after the time $T_{i^+}$ when $\rho_{i^+}$ becomes zero, this establishes the upper bound for $t_\star$ in \eqref{eq:tstar_range}, which is similar to the one obtained by John.

To obtain the lower bound on $t_\star$, we first consider again \eqref{eq:dsivi}, from which we immediately obtain by \eqref{eq:vi}, \eqref{eq:Wt_t0estimate}, \eqref{eq:St_t0estimate}, \eqref{eq:t0}, \eqref{eqs:VSJU}, \eqref{eqs:tV_J_V_U_bootstrap_results}, \eqref{eqs:other_bootstrap_results}, \eqref{eq:W0_is_Otheta} 
\begin{equation*}
  v^i(z_i,t) - w^i(z_i,0) \leq \mathcal O( W_0^2 + tVJ + tV^2S ) = \mathcal O( \vartheta W_0 + \vartheta W_0 + \vartheta^2 W_0 ) = \mathcal O( \vartheta W_0 ) ,
\end{equation*}
i.e., 
\begin{equation} \label{eq:vi_upr_bd}
  v^i(z_i,t) \leq w^i(z_i,0) + \mathcal O( \vartheta W_0 ) .
\end{equation}
Similarly, and for later reference, we obtain 
\begin{equation} \label{eq:vi_lwr_bd}
  v^i(z_i,t) \geq w^i(z_i,0) - \mathcal O( \vartheta W_0 ) .
\end{equation}
Consequently, we can assume by \eqref{eq:W0W0p}, \eqref{eq:W0_is_OW0p}, that, for suitably small $\vartheta$, 
\begin{equation} \label{eq:vip_stays_close_to_W0p}
  v^i \leq (1+\varepsilon) W_0^+ , \qquad \forall i\in\{1,\ldots,N\} .
\end{equation}
Now, going back to \eqref{eq:dsirhoi_signed}, we have with \eqref{eq:alphat} 
\begin{equation*}
  \frac{\partial}{\partial s_i}\rho_i \geq -\abs{\cull{ciii}}(v^i)_+ - \alpha\rho_i ,
\end{equation*}for $v^i \leq (v^i)_+$, whence $-v^i \geq -(v^i)_+$, where $(v^i)_+$ denotes the positive part of $v^i$. Thus, by \eqref{eq:ciii_stays_close} and for $\vartheta$ small enough, 
\begin{equation} \label{eq:dsirhoi_lwr_bd_vip}
  \frac{\partial}{\partial s_i}\rho_i \geq -(1+\varepsilon)\abs{\cull{ciii}(0)}(v^i)_+ - \alpha\rho_i .
\end{equation}
Then, by Gronwall (Lemma~\ref{lem:G}), using \eqref{eq:Wt_t0estimate}, \eqref{eq:Vt}, \eqref{eqs:tV_J_V_U_bootstrap_results}, \eqref{eq:rhoi_initially}, we get for sufficiently small $\vartheta$ and all $t\in[0,t_\star)$ (we already showed that $t_\star\leq\overline{T_0}$) 
\begin{equation} \label{eq:rhoi_lwr_bd_vip}
  \rho_i(z_i,t) \geq (1-\varepsilon) \biggl( 1 - (1+\varepsilon)^2\abs{\cull{ciii}(0)}\int_0^t \bigl(v^i(z_i,s)\bigr)_+ ds \biggr) .
\end{equation}
Thus, \eqref{eq:vip_stays_close_to_W0p} implies that 
\begin{equation*}
  \rho_i(\argplaceholder,t) \geq (1-\varepsilon)\bigl( 1 - (1+\varepsilon)^3\abs{\cull{ciii}(0)}tW_0^+ \bigr) .
\end{equation*}
Therefore, 
\begin{equation*}
  \rho_i(\argplaceholder,t) > 0 \qquad\text{ as long as }\qquad t < \frac1{(1+\varepsilon)^3\abs{\cull{ciii}(0)}W_0^+} .
\end{equation*}
We conclude that the inverse density of characteristics in \emph{each} characteristic strip remains bounded away from zero as long as $t<\underline{T_0}$ (cf.~\eqref{eq:T0}). Thus, from \eqref{eq:dxu}, \eqref{eq:vi}, \eqref{eqs:tV_J_V_U_bootstrap_results}, we see that $u\in\nball[N]_{\delta}(0)$ and $\frac{\partial}{\partial x}u$ remains bounded, for all times in $\bigl[0,\min\{t_\star,\underline{T_0}\}\bigr)$. Thus, if $t_\star<\underline{T_0}$, we should be able to extend $u$ across $t=t_\star$, which would contradict the definition of $t_\star$ as the maximal time of existence of the solution $u$. Therefore, also the lower bound in \eqref{eq:tstar_range} must hold. This finishes the proof of Theorem~\ref{thm:main_thm}~\eqref{thm:main_thm_tstar}.

We next wish to show that $\rho_i$ remains bounded away from zero along every $j^\text{th}$ characteristic ending (that is, for $t$ approaching $t_\star$) in a point outside $\mathcal R_i$. Before doing so, however, we quickly analyse the behaviour of $\rho_i$ outside $\mathcal R_i$. From \eqref{eq:dsirhoi}, we have for every $z_i\not\in I_0$ 
\begin{equation*}
  -\widetilde\alpha(z_i,t)\rho_i(z_i,t) \leq \frac{\partial}{\partial s_i}\rho_i(z_i,t) \leq \widetilde\alpha(z_i,t)\rho_i(z_i,t) ,
\end{equation*}
where 
\begin{equation*}
  \widetilde\alpha(z_i,t) = \sup_{0\leq s'\leq t} \biggabs{ \sum_m \cull{ciim}\Bigl(u\bigl(X_i(z_i,s'),s'\bigr)\Bigr)w^m\bigl(X_i(z_i,s'),s'\bigr) } , 
\end{equation*}
which is non--decreasing in $t$. Then, by \eqref{eq:Wt}, \eqref{eq:Wt_t0estimate}, \eqref{eqs:VSJU}, \eqref{eq:pik}, \eqref{eq:intpikwk_is_OJ}, \eqref{eq:W0_is_Otheta}, \eqref{eqs:tV_J_V_U_bootstrap_results}, we get for all $z_i\not\in I_0$, $t\in[0,t_\star)$,
\begin{align*}
  \int_0^t\widetilde\alpha(z_i,s)ds &= \mathcal O \biggl( W_0 + tV + \sum_k\int_{\Pi^{z_i}_k(t)}\lrabs{w^k\bigl(X_i(z_i,t'),t'\bigr)}dt' \biggr) \\
  &= \mathcal O( W_0 + tV + J) = \mathcal O( \vartheta + \vartheta + \vartheta ) = \mathcal O(\vartheta) .
\end{align*}
Thus, using Gronwall with \eqref{eq:rhoi_initially}, we can ensure that, if $\vartheta$ is chosen suitably small, we have 
\begin{equation} \label{eq:rhoi_range_outside_Ri}
  1-\varepsilon \leq \rho_i(z_i,t) \leq 1+\varepsilon , \qquad \forall z_i\not\in I_0, \forall t\in[0,t_\star) .
\end{equation}
We observe that this then also implies that, using \eqref{eq:vi}, \eqref{eqs:tV_J_V_U_bootstrap_results},
\begin{equation} \label{eq:vi_bound_outside_Ri}
  v^i(z_i,t) = \mathcal O(W_0^2) , \qquad \forall z_i\not\in I_0, \forall t\in[0,t_\star) .
\end{equation}
This thus establishes the remaining part of Theorem~\ref{thm:main_thm}~\eqref{thm:main_thm_firstorder}.

Let now $(x,t)\in\bigl(\rz\times[0,t_\star)\bigr)\setminus \bigcup_{k}\mathcal R_k$ and consider the $j^{\text{th}}$ characteristic $\mathcal C_j(z_j)$, $z_j\not\in I_0$, passing through $(x,t)$. Assume further that $i\in\{1,\ldots,N\}$ is such that $\mathcal C_j(z_j)\bigcap \mathcal R_i\neq\emptyset$. Fixing $\kappa\in\Bigl(0,\frac{(1-3\varepsilon)}{(1+\varepsilon)}\Bigr)$ (for the sake of definiteness, let $\kappa=\frac{(1-\varepsilon)}3$), we face two possibilities. Either $\rho_i\geq\kappa$ all along $\mathcal C_j(z_j)\bigcap \mathcal R_i$, or there is a point on $\mathcal C_j(z_j)\bigcap \mathcal R_i$ where $\rho_i<\kappa$. Having nothing to prove in the first case, we focus our attention on the second case and show that $\rho_i$ cannot become too small along $\mathcal C_j(z_j)\bigcap \mathcal R_i$. Recalling Subsection~\ref{subsec:bichar_coords}, let us parametrise $\mathcal C_j(z_j)\bigcap \mathcal R_i$ by $y_i\in I_0$ ($(z_j,0)$ and $(x,t)$ both lie outside $\mathcal R_i$), so that 
\begin{equation*}
  \mathcal C_j(z_j)\bigcap\mathcal R_i = \biggl\{ \Bigl(X_i\bigl(y_i,t'(y_i,y_j)\bigr),t'(y_i,y_j)\Bigr) \biggm| y_j=z_j, y_i\in I_0 \biggr\} . 
\end{equation*}
By our assumption, there must be a $\widehat y_i\in I_0$ such that $\rho_i\bigl(\widehat y_i,t'(\widehat y_i,z_j)\bigr)<\kappa$, so let us study $\rho_i$ along the $i^{\text{th}}$ characteristic $\mathcal C_i(\widehat y_i)$ starting at $\widehat y_i$. Our first claim is that $\rho_i$ must be strictly decreasing whenever it is smaller than $\kappa$, at least when $\vartheta$ is sufficiently small. To see this, we first establish that $v^i$ needs to enjoy a positive initial lower bound in this case. For assume that, for any $z_i\in I_0$, 
\begin{equation} \label{eq:wi_initial_upr_bd_rhoi_larger_kappa}
  w^i(z_i,0) \leq \frac{(1-\varepsilon)^3\min_k\abs{\cull{ckkk}(0)}}{(1+\varepsilon)^2\max_l\abs{\cull{clll}(0)}}(1-2\varepsilon-\kappa)W_0^+ \stackrel{\eqref{eq:T0}}= \widefrac{\underline{T_0}}{\overline{T_0}}\cdot\frac{(1+\varepsilon)}{(1-\varepsilon)}(1-2\varepsilon-\kappa)W_0^+ .
\end{equation}
Then, by \eqref{eq:vi_upr_bd}, \eqref{eq:W0_is_OW0p}, 
\begin{equation*}
  v^i(z_i,s_i) \leq w^i(z_i,0)+\mathcal O(\vartheta W_0) \leq \widefrac{\underline{T_0}}{\overline{T_0}}\cdot\frac{(1+\varepsilon)}{(1-\varepsilon)}(1-\varepsilon-\kappa)W_0^+ , \qquad \forall s_i \in [0,t_\star) ,
\end{equation*}
if $\vartheta$ is chosen to be suitably small (but independently of $\kappa$). Since this bound holds also for the positive part $(v^i)_+$ of $v^i$, \eqref{eq:rhoi_lwr_bd_vip} then implies that 
\begin{equation*}
  \rho_i(z_i,s_i) \geq (1-\varepsilon)\biggl( 1 - (1+\varepsilon)^2\abs{\cull{ciii}(0)}\widefrac{\underline{T_0}}{\overline{T_0}}\cdot\frac{(1+\varepsilon)}{(1-\varepsilon)}(1-\varepsilon-\kappa)s_i W_0^+ \biggr),
\end{equation*}
which, since $s_i<t_\star\leq\overline{T_0}$, gives by \eqref{eq:T0}
\begin{align*}
  \rho_i(z_i,s_i) &\geq (1-\varepsilon)\biggl( 1- \frac{(1+\varepsilon)^3\abs{\cull{ciii}(0)}(1-\varepsilon-\kappa)W_0^+}{(1+\varepsilon)^3\bigl(\max_l\abs{\cull{clll}(0)}\bigr)W_0^+(1-\varepsilon)} \biggr) \\
  &\geq (1-\varepsilon)\biggl( \frac{1-\varepsilon - (1-\varepsilon-\kappa)}{1-\varepsilon} \biggr) = \kappa , & \forall s_i\in[0,t_\star) .
\end{align*}
We conclude that 
\begin{equation*}
  w^i(\widehat y_i,0) > \widefrac{\underline{T_0}}{\overline{T_0}}\cdot\frac{(1+\varepsilon)}{(1-\varepsilon)}(1-2\varepsilon-\kappa)W_0^+ ,
\end{equation*}
so that $\rho_i<\kappa$ is possible along $\mathcal C_i(\widehat y_i)$. But then, \eqref{eq:vi_lwr_bd} and \eqref{eq:W0_is_OW0p} imply that, for sufficiently (but independently of $\kappa$) small $\vartheta$,
\begin{equation*}
  v^i(\widehat y_i,s_i) \geq \widefrac{\underline{T_0}}{\overline{T_0}}\frac{(1+\varepsilon)}{(1-\varepsilon)}(1-3\varepsilon-\kappa)W_0^+ , \qquad \forall s_i\in [0,t_\star) .
\end{equation*}
We thus see that, by \eqref{eq:dsirhoi_signed} and using \eqref{eq:ciii_stays_close}, \eqref{eq:Vt}, we have 
\begin{equation*}
  - \frac{\partial}{\partial s_i}\rho_i(\widehat y_i, s_i) \geq (1+\varepsilon)\abs{\cull{ciii}(0)}\widefrac{\underline{T_0}}{\overline{T_0}}(1-3\varepsilon-\kappa)W_0^+ - \kappa \mathcal O(V) ,
\end{equation*}
whenever $\rho_i<\kappa$. By \eqref{eqs:tV_J_V_U_bootstrap_results}, $V=\mathcal O(W_0^2)$, hence, using \eqref{eq:W0_is_Otheta}, \eqref{eq:W0_is_OW0p}, we can estimate the second term on the right--hand side for suitably small $\vartheta$ (again, independently of $\kappa$) by a small fraction of the first term, say by 
\begin{equation*}
  -(1+\varepsilon)\abs{\cull{ciii}(0)}\widefrac{\underline{T_0}}{\overline{T_0}}\varepsilon\kappa W_0^+ ,
\end{equation*}
so that we obtain 
\begin{equation} \label{eq:mdsirhoi_lwr_bd_rhoi_smaller_kappa}
  - \frac{\partial}{\partial s_i}\rho_i(\widehat y_i,s_i) \geq (1+\varepsilon)\abs{\cull{ciii}(0)}\widefrac{\underline{T_0}}{\overline{T_0}}\bigl(1-3\varepsilon-(1+\varepsilon)\kappa\bigr)W_0^+ ,
\end{equation}
which is valid for all $s_i\in[0,t_\star)$ for which $\rho_i(\widehat y_i,s_i)<\kappa$. This means that, once on an $i^{\text{th}}$ characteristic $\rho_i$ drops below $\kappa$, it thereafter steadily decreases. It follows that 
\begin{equation*}
  \rho_i(\widehat y_i,s_i)<\kappa \quad\text{ and }\quad\text{ \eqref{eq:mdsirhoi_lwr_bd_rhoi_smaller_kappa} holds, }\qquad\text{ for all }  s_i\in\bigl[t'(\widehat y_i,y_j),t_\star\bigr) .
\end{equation*}
As a consequence, letting $\Delta T=t-t'(\widehat y_i,z_j)>0$, we have 
\begin{align}
\nonumber
  \rho_i(\widehat y_i, t-\Delta T) &\geq \underbrace{\rho_i(\widehat y_i, t)}_{\phantom0\geq0} + (1+\varepsilon)\abs{\cull{ciii}(0)}\widefrac{\underline{T_0}}{\overline{T_0}}\bigl(1-3\varepsilon-(1+\varepsilon)\kappa\bigr)W_0^+\Delta T \\
\label{eq:rhoi_lwr_bd_DeltaT}
  &\geq (1+\varepsilon)\abs{\cull{ciii}(0)}\widefrac{\underline{T_0}}{\overline{T_0}}\bigl(1-3\varepsilon-(1+\varepsilon)\kappa\bigr)W_0^+\Delta T .
\end{align}
Therefore, if we can find a lower bound on $\Delta T$, we will have established that $\rho_i$ remains bounded away from zero along $\mathcal C_j(z_j)\bigcap \mathcal R_i$. To do so, we will make use of the fact that the seed $f_0$ for the initial data $f$ is $C^{1,1}$ and compactly supported in $I_0=[-1,1]$, so that $f_0$ and $f_0'$ vanish at the boundary $\partial I_0=\{\pm1\}$ of $I_0$. For simplicity, we restrict ourselves to the case $i>j$, the case $i<j$ being shown in an entirely similar manner, requiring only a few obvious modifications. Under this additional assumption, we must have for our initially picked point $(x,t)$ that $x>X_i(1,t)$, i.e., $(x,t)$ lies on the right of $\mathcal R_i$. We then denote by 
\begin{equation} \label{eq:dir}
  d^r_i(z_i,s_i) = X_i(1,s_i)-X_i(z_i,s_i) ,\qquad z_i\in I_0 , s_i\in[0,t_\star) ,
\end{equation}
the $x$--distance at time $s_i$ of the $i^{\text{th}}$ characteristic $\mathcal C_i(z_i)$ emanating from $z_i\in I_0$ to the right boundary $X_i(1,t)$ of the $i^{\text{th}}$ strip $\mathcal R_i$. Noticing that \eqref{eq:lambdaiestariei_of_f_differ_by_Otheta} implies for sufficiently small $\vartheta$ that 
\begin{equation*}
  1-\varepsilon = (1-\varepsilon)\bignorm{\postsup{e}{\star}^i(0)}_0 \leq \bignorm{\postsup{e}{\star}^i(f)}_0 \leq (1+\varepsilon)\bignorm{\postsup{e}{\star}^i(0)}_0 = 1+\varepsilon ,
\end{equation*}
 we see from \eqref{eq:wk}, \eqref{eq:main_ic}, the fact that $f_0$ is supported in $I_0$, and \eqref{eq:L}, that 
\begin{align*}
  \lrabs{w^i(z_i,0)} &= \Bigabs{\postsup{e}{\star}^i\bigl(f(z_i)\bigr)f'(z_i)} \leq (1+\varepsilon)\bignorm{f'(z_i)}_0 \\
    &\leq (1+\varepsilon)\vartheta\bignorm{f_0'(z_i)}_0 = (1+\varepsilon)\vartheta\bignorm{f_0'(z_i)-\underbrace{f_0'(1)}_{\phantom0=0}}_0 \\
    &\leq (1+\varepsilon)\vartheta L\lrabs{z_i-1} = (1+\varepsilon)\vartheta L d^r_i(z_i,0) .
\end{align*}
As a consequence, since \eqref{eq:wi_initial_upr_bd_rhoi_larger_kappa} implies that $\rho_i\geq\kappa$, we have: 
\begin{equation*}
  \text{if }\quad d^r_i(z_i,0) \leq \widefrac{\underline{T_0}}{\overline{T_0}}\cdot\frac{(1-2\varepsilon-\kappa)}{(1-\varepsilon)}\frac{W_0^+}{\vartheta L} , \quad%
    \text{ then }\quad \rho_i(z_i,s_i) \geq \kappa , \quad \forall s_i\in[0,t_\star) .
\end{equation*}
We note that, in particular, we must have 
\begin{equation*}
  d^r_i(\widehat y_i,0) > \widefrac{\underline{T_0}}{\overline{T_0}}\cdot\frac{(1-2\varepsilon-\kappa)}{(1-\varepsilon)}\frac{W_0^+}{\vartheta L} .
\end{equation*}
Moreover, applying the mean value theorem to the definition \eqref{eq:rhoi} of $\rho_i$, we infer that: 
\begin{equation*}
  \text{if }\quad d^r_i(z_i,0) \leq \widefrac{\underline{T_0}}{\overline{T_0}}\cdot\frac{(1-2\varepsilon-\kappa)}{(1-\varepsilon)}\frac{W_0^+}{\vartheta L} , \quad%
    \text{ then }\quad d^r_i(z_i,s_i) \geq \kappa d^r_i(z_i,0) , \quad \forall s_i\in[0,t_\star) .
\end{equation*}
Since characteristics of the same family never cross (cf.~\eqref{eq:Xiz1z2_ineq}), we conclude that 
\begin{equation} \label{eq:dri_lwr_bd}
  d^r_i(\widehat y_i,s_i) \geq \widefrac{\underline{T_0}}{\overline{T_0}}\cdot\frac{(1-2\varepsilon-\kappa)}{(1-\varepsilon)}\frac{W_0^+}{\vartheta L}\kappa , \qquad \forall s_i\in[0,t_\star) .
\end{equation}
By \eqref{eq:Xi_ivp}, \eqref{eq:supinflambdai}, we have 
\begin{align*}
  x-X_i\bigl(\widehat y_i,t'(\widehat y_i,z_j)\bigr) &\leq \overline\lambda_j \Delta T \\
\intertext{and}
  X_i(\widehat y_i,t) - X_i\bigl(\widehat y_i,t'(\widehat y_i,z_j)\bigr) &\geq \underline\lambda_i \Delta T , \\
\intertext{so that (recall $i>j$, so that, by \eqref{eq:sigma_pos}, $\underline\lambda_i\leq\overline\lambda_i<\underline\lambda_j\leq\overline\lambda_j$)}
  \Delta T \geq \frac{x-X_i(\widehat y_i,t)}{\overline\lambda_j-\underline\lambda_i} \geq \frac{d^r_i(\widehat y_i,t)}{\overline\lambda_j-\underline\lambda_i} .
\end{align*}
Setting then 
\begin{equation} \label{eq:Sigma}
  \Sigma = \overline\lambda_1-\underline\lambda_N ,
\end{equation}
we recover from \eqref{eq:dri_lwr_bd} that 
\begin{equation*}
  \Delta T \geq \widefrac{\underline{T_0}}{\overline{T_0}}\cdot\frac{(1-2\varepsilon-\kappa)}{(1-\varepsilon)}\frac{W_0^+}{\vartheta\Sigma L}\kappa .
\end{equation*}
Inserting this into \eqref{eq:rhoi_lwr_bd_DeltaT}, we get 
\begin{align*}
  \rho_i(\widehat y_i,t-\Delta T) &\geq \frac{(1+\varepsilon)}{(1-\varepsilon)}\abs{\cull{ciii}(0)}\biggl(\widefrac{\underline{T_0}}{\overline{T_0}}W_0^+\biggr)^2\frac{\bigl((1-3\varepsilon)-(1+\varepsilon)\kappa\bigr)(1-2\varepsilon-\kappa)\kappa}{\vartheta\Sigma L} \\
    &\geq \frac{1}{27}(1+\varepsilon)(1-\varepsilon)^2\frac{\abs{\cull{ciii}(0)}}{\Sigma}\biggl(\widefrac{\underline{T_0}}{\overline{T_0}}\biggr)^2\frac{(W_0^+)^2}{\vartheta L} ,
\end{align*}
where the last inequality follows from our choice of $\kappa=\frac{(1-\varepsilon)}3$ (recall also that $\varepsilon$ was supposed to be very small --- here, $\varepsilon<1/10$ would have been enough). Since $\widehat y_i\in I_0$ was arbitrary (but such that $\rho_i\bigl(\widehat y_i,t'(\widehat y_i,z_j)\bigr)<\kappa$), we finally obtain using \eqref{eq:W0p_pos}, \eqref{eq:W00p_lwr_bd_negcase}, \eqref{eq:T0} 
\begin{equation} \label{eq:rhoi_lwr_bd_Cj_cap_Ri}
  \rho_i \geq \min\biggl\{ \frac{(1-\varepsilon)}{3} \;,\; \vartheta C_{i,\varepsilon} \biggr\} \qquad\text{ along $\mathcal C_j(z_j)\bigcap \mathcal R_i$, if $\vartheta$ is small enough} ,
\end{equation}
where 
\begin{equation*}
  C_{i,\varepsilon} = \frac{1}{432}\cdot\frac{(1-\varepsilon)^{10}}{(1+\varepsilon)^5}\cdot\frac{\abs{\cull{ciii}(0)}}{\Sigma}\biggl(\frac{\min_k\abs{\cull{ckkk}(0)}}{\max_l\abs{\cull{clll}(0)}}\biggr)^2\frac{(W_{0,0})^4}{L^3} 
\end{equation*}
is a constant depending only on $i$, $\varepsilon$, $a$, $\delta$ and the $C^{1,1}$--norm of $f_0$.

\section{The bounds on the second order quantities} \label{sec:secondorder}

\subsection{The evolution equations for the second order quantities} \label{subsec:2ndorder_evoleqs}

We use \eqref{eq:dsiwi_fully_expanded}, \eqref{eq:dsirhoi} and \eqref{eq:dsivi} to obtain the evolution equations along $i^{\text{th}}$ characteristics for $\frac{\partial}{\partial x}w^i$, $\mu_i=\frac{\partial}{\partial z_i}\rho_i$ and $\nu^i=\frac{\partial}{\partial z_i}v^i$, respectively, obtaining second order quantities on the right--hand side. However, for $\mu_i$ and $\nu^i$, these will depend on $\presup{\tau}{(i)}_j=\frac{\partial}{\partial y_j}\rho_j$ and $\presup{\omega}{(i)}^j=\frac{\partial}{\partial y_j}v^j$ ($j\in\{1,\ldots,N\}\setminus\{i\}$). Recall from Subsection~\ref{subsec:bichar_coords} that $y_i$ is held fixed when taking $\frac{\partial}{\partial y_j}$ in the definitions \eqref{eqs:mt_tauijomegaij} of $\presup{\tau}{(i)}_j$ and $\presup{\omega}{(i)}^j$, so that the latter are the derivatives along $i^{\text{th}}$ characteristics of $\rho_j$ and $v^j$, respectively, in bi--characteristic coordinates. As we will need to bound $\presup{\tau}{(i)}_j$ and $\presup{\omega}{(i)}^j$ inside $\mathcal R_i$, we compute their evolution equations in bi--characteristic coordinates along $j^{\text{th}}$ characteristics. Using those, we will be able to obtain bounds inside $\mathcal R_i$ that depend on bounds at the boundary of $\mathcal R_i$.

First, remark that, using \eqref{eqs:dccoords}, \eqref{eq:dxlambdaj}, we have 
\begin{equation} \label{eq:commutator_dsidx}
  \Bigl[ \frac{\partial}{\partial s_i},\frac{\partial}{\partial x} \Bigr] = \frac{\partial}{\partial s_i}\frac{\partial}{\partial x} - \frac{\partial}{\partial x}\frac{\partial}{\partial s_i} = - \Bigl(\frac{\partial}{\partial x}\lambda_i\Bigr)\frac{\partial}{\partial x} = - \Bigl( \sum_k\cull{ciik}w^k \Bigr)\frac{\partial}{\partial x} . 
\end{equation}
Therefore, \eqref{eq:dsiwi_fully_expanded} gives, taking into account \eqref{eq:dxu} and \eqref{eq:gamma_sym} (recall also the notation \eqref{eq:Delphi} from \ref{subsec:genuine_nl}), 
\begin{align}
  \frac{\partial}{\partial s_i}\Bigl(\frac{\partial}{\partial x} w^i\Bigr) &= - \Bigl(\frac{\partial}{\partial x}\lambda_i\Bigr)\Bigl(\frac{\partial}{\partial x} w^i\Bigr) + \frac{\partial}{\partial x}\Bigl( \frac{\partial}{\partial s_i}w^i \Bigr) \nonumber \displaybreak[0] \\
  &= - \cull{ciii}w^i\Bigl(\frac{\partial}{\partial x}w^i\Bigr) - \Bigl(\sum_{\stack m{m\neq i}}\cull{ciim}w^m\Bigr)\Bigl(\frac{\partial}{\partial x}w^i\Bigr) \nonumber \\
  &\phantom{=\ } {} + \frac{\partial}{\partial x}\biggl( -\cull{ciii}(w^i)^2 + \Bigl(\sum_{\stack m{m\neq i}}2\cull{\gamma iim}w^m\Bigr)w^i + \sum_{\stack{l,m}{\stack{l\neq i,m\neq i}{l\neq m}}} \cull{\gamma ilm}w^lw^m \biggr) \nonumber \displaybreak[0] \\
  &= - \cull{ciii}w^i\Bigl(\frac{\partial}{\partial x}w^i\Bigr) - \Bigl(\sum_{\stack m{m\neq i}}\cull{ciim}w^m\Bigr)\Bigl(\frac{\partial}{\partial x}w^i\Bigr) \nonumber \\
  &\phantom{=\ } {} - \bigl(D_{e_i}\cull{ciii}\bigr)(w^i)^3 - \Bigl(\sum_{\stack p{p\neq i}}\bigl(D_{e_p}\cull{ciii}\bigr)w^p\Bigr)(w^i)^2 - 2\cull{ciii}w^i\Bigl(\frac{\partial}{\partial x}w^i\Bigr) \nonumber \\
  &\phantom{=\ } {} + \Bigl( \sum_{\stack m{m\neq i}}\bigl(2D_{e_i}\cull{\gamma iim}\bigr)w^m\Bigr)(w^i)^2 + \Bigl( \sum_{\stack{m,p}{m\neq i,p\neq i}}\bigl(2D_{e_p}\cull{\gamma iim}\bigr)w^mw^p\Bigr)w^i \nonumber \\
  &\phantom{=\ } {} + \biggl(\sum_{\stack m{m\neq i}}2\cull{\gamma iim}\Bigl(\frac{\partial}{\partial x}w^m\Bigr)\biggr)w^i + \Bigl(\sum_{\stack m{m\neq i}}2\cull{\gamma iim}w^m\Bigr)\Bigl(\frac{\partial}{\partial x}w^i\Bigr) \nonumber \\
  &\phantom{=\ } {} + \Bigl(\sum_{\makebox[0pt][c]{\tiny $\stack{l,m}{\stack{l\neq i,m\neq i}{l\neq m}}$}}\bigl(D_{e_i}\cull{\gamma ilm}\bigr)w^lw^m\Bigr)w^i + 2\sum_{\makebox[0pt][c]{\tiny $\stack{l,m}{\stack{l\neq i,m\neq i}{l\neq m}}$}}\bigl(D_{e_m}\cull{\gamma ilm}\bigr)w^l(w^m)^2 \nonumber \\
  &\phantom{=\ } {} + \sum_{\makebox[0pt][c]{\tiny $\stack{l,m,p}{\stack{l\neq i,m\neq i,p\neq i}{l\neq m,p\neq l,p\neq m}}$}}\bigl(D_{e_p}\cull{\gamma ilm}\bigr)w^lw^mw^p + 2\sum_{\makebox[0pt][c]{\tiny $\stack{l,m}{\stack{l\neq i,m\neq i}{l\neq m}}$}}\cull{\gamma ilm}w^l\Bigl(\frac{\partial}{\partial x} w^m\Bigr) , \nonumber \\ 
\intertext{i.e.,}
\label{eq:dsidxwi}
  \frac{\partial}{\partial s_i}\Bigl(\frac{\partial}{\partial x}w^i\Bigr) %
  &= -3\cull{ciii}w^i\Bigl(\frac{\partial}{\partial x}w^i\Bigr) - \Bigl(\sum_{\stack m{m\neq i}}\cull{ciim}w^m\Bigr)\Bigl(\frac{\partial}{\partial x}w^i\Bigr) + \Bigl(\sum_{\stack m{m\neq i}}2\cull{\gamma iim}w^m\Bigr)\Bigl(\frac{\partial}{\partial x}w^i\Bigr) \\
  &\phantom{=\ } {} + \biggl(\sum_{\stack m{m\neq i}}2\cull{\gamma iim}\Bigl(\frac{\partial}{\partial x}w^m\Bigr)\biggr)w^i + 2\sum_{\makebox[0pt][c]{\tiny $\stack{l,m}{\stack{l\neq i,m\neq i}{l\neq m}}$}}\cull{\gamma ilm}w^l\Bigl(\frac{\partial}{\partial x} w^m\Bigr) \nonumber \\
  &\phantom{=\ } {} - \bigl(D_{e_i}\cull{ciii}\bigr)(w^i)^3 - \Bigl(\sum_{\stack p{p\neq i}}\bigl(D_{e_p}\cull{ciii}\bigr)w^p\Bigr)(w^i)^2 + \Bigl( \sum_{\stack m{m\neq i}}\bigl(2D_{e_i}\cull{\gamma iim}\bigr)w^m\Bigr)(w^i)^2 \nonumber \\
  &\phantom{=\ } {} + \Bigl( \sum_{\makebox[0pt][c]{\tiny $\stack{m,p}{m\neq i,p\neq i}$}}\bigl(2D_{e_p}\cull{\gamma iim}\bigr)w^mw^p\Bigr)w^i + \Bigl(\sum_{\makebox[0pt][c]{\tiny $\stack{l,m}{\stack{l\neq i,m\neq i}{l\neq m}}$}}\bigl(D_{e_i}\cull{\gamma ilm}\bigr)w^lw^m\Bigr)w^i \nonumber \\
  &\phantom{=\ } {} + \sum_{\makebox[0pt][c]{\tiny $\stack{l,m,p}{\stack{l\neq i,m\neq i,p\neq i}{l\neq m,p\neq l,p\neq m}}$}}\bigl(D_{e_p}\cull{\gamma ilm}\bigr)w^lw^mw^p + 2\sum_{\makebox[0pt][c]{\tiny $\stack{l,m}{\stack{l\neq i,m\neq i}{l\neq m}}$}}\bigl(D_{e_m}\cull{\gamma ilm}\bigr)w^l(w^m)^2 . \nonumber
\end{align}
We will make use of this equation to show the boundedness of $\frac{\partial}{\partial x} w^i$ ($i\in\{1,\ldots,N\}$) in the region $\bigl(\rz\times[0,t_\star)\bigr)\setminus\bigcup_{k}\mathcal R_k$, i.e., outside all the characteristic strips, using \eqref{eq:rhoi_lwr_bd_Cj_cap_Ri}. As will become apparent in the next subsection, we should also control $\frac{\partial}{\partial x}\rho_i$ ($i\in\{1,\ldots,N\}$) outside all the characteristic strips. But this will follow from the bounds on $\frac{\partial}{\partial x}w^i$ and the following evolution equation of $\frac{\partial}{\partial x}\rho_i$ along an $i^{\text{th}}$ characteristic.
\begin{align}
\nonumber
  \frac{\partial}{\partial s_i}\Bigl(\frac{\partial}{\partial x}\rho_i\Bigr) &\ \ \stackrel{\makebox[0pt][c]{\scriptsize \eqref{eq:commutator_dsidx}}}{=}\ \  -\Bigl(\frac{\partial}{\partial x}\lambda_i\Bigr)\Bigl(\frac{\partial}{\partial x}\rho_i\Bigr) + \frac{\partial}{\partial x}\Bigl(\frac{\partial}{\partial s_i}\rho_i\Bigr) \\
\nonumber
  &\ \ \stackrel{\makebox[0pt][c]{\scriptsize \eqref{eq:rhoi},\eqref{eq:Xi_ivp}}}{=}\ \  -\Bigl(\frac{\partial}{\partial x}\rho_i\Bigr)\Bigl(\frac{\partial}{\partial x}\lambda_i\Bigr) + \frac{\partial}{\partial x}\Bigl(\frac{\partial}{\partial z_i}\lambda_i\Bigr) \stackrel{\eqref{eqs:dccoords}}{=} \frac{\partial}{\partial z_i}\Bigl(\frac{\partial}{\partial x}\lambda_i\Bigr) = \rho_i\frac{\partial^2}{\partial x^2}\lambda_i \\
\nonumber
  &\ \ \stackrel{\makebox[0pt][c]{\scriptsize \eqref{eq:dxlambdaj}}}{=}\ \ \rho_i\frac{\partial}{\partial x}\Bigl(\sum_m\cull{ciim}w^m\Bigr) \stackrel{\eqref{eq:dxu}}{=} \rho_i\sum_{l,m}\bigl(D_{e_l}\cull{ciim}\bigr)w^m + \rho_i\sum_m\cull{ciim}\Bigl(\frac{\partial}{\partial x}w^m\Bigr) \\
\label{eq:dsidxrhoi}
  &\ \ =\ \ \cull{ciii}\Bigl(\frac{\partial}{\partial x}w^i\Bigr)\rho_i + \biggl(\sum_{\stack m{m\neq i}}\cull{ciim}\Bigl(\frac{\partial}{\partial x}w^m\Bigr)\biggr)\rho_i \\
\nonumber
  &\ \ \phantom{=\ }\ \ {} + \bigl(D_{e_i}\cull{ciii}\bigr)(w^i)^2\rho_i + \Bigr(\sum_{\stack m{m\neq i}}\bigl(D_{e_m}\cull{ciii}+D_{e_i}\cull{ciim}\bigr)w^m\Bigr)w^i\rho_i \\
\nonumber
  &\ \ \phantom{=\ }\ \ {} + \Bigl(\sum_{\stack m{m\neq i}}\bigl(D_{e_m}\cull{ciim}\bigr)(w^m)^2\Bigr)\rho_i + \Bigl(\sum_{\stack{l,m}{\stack{l\neq i,m\neq i}{l\neq m}}}\bigl(D_{e_l}\cull{ciim}\bigr)w^lw^m\Bigr)\rho_i .
\end{align}

Before computing the evolution equations for $\mu_i$ and $\nu^i$, we want to point out the following useful consequence of eqs.~\eqref{eqs:dbccoords_partial}:
\begin{equation} \label{eq:partialzj_in_terms_of_partialyj_and_partialsj}
  \frac{\partial}{\partial z_j} = \frac{\partial}{\partial y_j} - \frac{\rho_j}{\lambda_i-\lambda_j}\frac{\partial}{\partial s_j} , \qquad j\in\{1,\ldots,N\}\setminus\{i\} .
\end{equation}
We can use this to express $\mu_j$ and $\nu^j$ in terms of quantities we will be able to control. We have along an $i^{\text{th}}$ characteristic and for all $j\neq i$, 
\begin{subequations} \label{eqs:mujnuj_in_terms_of_tauijomegaij}
\begin{align}
\label{eq:muj_in_terms_of_tauij}
  \mu_j = \frac{\partial}{\partial z_j}\rho_j = \frac{\partial}{\partial y_j}\rho_j + \frac{\rho_j}{\lambda_j-\lambda_i}\Bigl(\frac{\partial}{\partial s_j}\rho_j\Bigr) = \presup{\tau}{(i)}^j + \frac{\rho_j}{\lambda_j-\lambda_i}\Bigl(\frac{\partial}{\partial s_j}\rho_j\Bigr) , \\
\label{eq:nuj_in_terms_of_omegaij}
  \nu^j = \frac{\partial}{\partial z_j}v^j = \frac{\partial}{\partial y_j}v^j + \frac{\rho_j}{\lambda_j-\lambda_i}\Bigl(\frac{\partial}{\partial s_j}v^j\Bigr) = \presup{\omega}{(i)}^j + \frac{\rho_j}{\lambda_j-\lambda_i}\Bigl(\frac{\partial}{\partial s_j}v^j\Bigr) .
\end{align}
\end{subequations}
Also, note that we can write for every $j\in\{1,\ldots,N\}\setminus\{i\}$, 
\begin{align}
\nonumber
  \frac{\partial}{\partial z_i}w^j &= \rho_i\frac{\partial}{\partial x}w^j = \frac{\rho_i}{\rho_j}\Bigl(\frac{\partial}{\partial z_j}\bigl(\rho_j^{-1}v^j\bigr)\Bigr) = -\frac{\rho_i}{\rho_j^3}\Bigl(\frac{\partial}{\partial z_j}\rho_j\Bigr)v^j + \frac{\rho_i}{\rho_j^2}\Bigl(\frac{\partial}{\partial z_j}v^j\Bigr) \\
\label{eq:dziwj}
  &= \frac{\rho_i}{\rho_j^2}\bigl( -w^j\mu_j + \nu^j \bigr) .
\end{align}
Using then \eqref{eqs:mujnuj_in_terms_of_tauijomegaij}, and taking into account \eqref{eq:vi}, \eqref{eq:dsiwi}, \eqref{eq:gammaill_is_zero}, \eqref{eq:gamma_sym}, we obtain 
\begin{align}
\nonumber
  \frac{\partial}{\partial z_i}w^j &= -\frac{w^j}{\rho_j^2}\rho_i\presup{\tau}{(i)}_j + \frac1{\rho_j^2}\rho_i\presup{\omega}{(i)}^j + \frac{\rho_i}{\rho_j(\lambda_j-\lambda_i)}\biggl( -w^j\Bigl(\frac{\partial}{\partial s_j}\rho_j\Bigr) + \Bigl(\frac{\partial}{\partial s_j}(\rho_j w^j)\Bigr) \biggr) \displaybreak[0] \\
\nonumber
  &= -\frac{w^j}{\rho_j^2}\rho_i\presup{\tau}{(i)}_j + \frac1{\rho_j^2}\rho_i\presup{\omega}{(i)}^j + \frac{\rho_i}{\lambda_j-\lambda_i}\Bigl(\frac{\partial}{\partial s_j}w^j\Bigr) \displaybreak[0] \\
\nonumber
  &= -\frac{w^j}{\rho_j^2}\rho_i\presup{\tau}{(i)}_j + \frac1{\rho_j^2}\rho_i\presup{\omega}{(i)}^j + \frac{\rho_i}{\lambda_j-\lambda_i}\sum_{p,q}\cull{\gamma jpq}w^pw^q \displaybreak[0] \\
\nonumber
  &= -\frac{w^j}{\rho_j^2}\rho_i\presup{\tau}{(i)}_j + \frac1{\rho_j^2}\rho_i\presup{\omega}{(i)}^j + \frac{\rho_i}{\lambda_j-\lambda_i}\biggl( \underbrace{\cull{\gamma jii}}_{\phantom0=0}(w^i)^2 + \Bigl(\sum_{\stack p{p\neq i}}2\cull{\gamma jip}w^p\Bigr)w^i + \sum_{\makebox[0pt][c]{\tiny $\stack{p,q}{p\neq i,q\neq i}$}}\cull{\gamma jpq}w^pw^q \biggr) \displaybreak[0] \\
\label{eq:dziwj_fully_expanded}
  &= -\frac{w^j}{\rho_j^2}\rho_i\presup{\tau}{(i)}_j + \frac1{\rho_j^2}\rho_i\presup{\omega}{(i)}^j + \frac1{\lambda_j-\lambda_i}\Bigl(\sum_{\stack p{p\neq i}}2\cull{\gamma jip}w^p\Bigr)v^i + \frac1{\lambda_j-\lambda_i}\Bigl(\sum_{\makebox[0pt][c]{\tiny $\stack{p,q}{p\neq i,q\neq i}$}}\cull{\gamma jpq}w^pw^q\Bigr)\rho_i .
\end{align}

Now, regarding the evolution equation of $\mu_i$, we get from \eqref{eq:dsirhoi}, taking into account \eqref{eq:dziu}, 
\begin{align*}
  \frac{\partial}{\partial s_i}\mu_i &= \frac{\partial}{\partial s_i}\Bigl(\frac{\partial}{\partial z_i}\rho_i\Bigr) = \frac{\partial}{\partial z_i}\Bigl(\frac{\partial}{\partial s_i}\rho_i\Bigr) \displaybreak[0] \\
  &= \frac{\partial}{\partial z_i}\biggl( \cull{ciii}v^i + \Bigl( \sum_{\stack m{m\neq i}}\cull{ciim}w^m \Bigr)\rho_i \biggr) \displaybreak[0] \\
  &= \bigl(D_{e_i}\cull{ciii}\bigr)(v^i)^2 + \Bigl(\sum_{\stack p{p\neq i}}\bigl(D_{e_p}\cull{ciii}\bigr)w^p\Bigr)\rho_iv^i + \cull{ciii}\Bigl(\frac{\partial}{\partial z_i}v^i\Bigr) \\
  &\phantom{=\ } {} + \Bigl(\sum_{\stack m{m\neq i}}\bigl(D_{e_i}\cull{ciim}\bigr)w^m\Bigr)\rho_iv^i + \Bigl(\sum_{\stack{m,p}{m\neq i,p\neq i}}\bigl(D_{e_p}\cull{ciim}\bigr)w^mw^p\Bigr)\rho_i^2 \\
  &\phantom{=\ } {} + \biggl(\sum_{\stack m{m\neq i}}\cull{ciim}\Bigl(\frac{\partial}{\partial z_i}w^m\Bigr)\biggr)\rho_i + \Bigl(\sum_{\stack m{m\neq i}}\cull{ciim}w^m\Bigr)\Bigl(\frac{\partial}{\partial z_i}\rho_i\Bigr) \displaybreak[0] \\
  &= \Bigl(\sum_{\stack m{m\neq i}}\cull{ciim}w^m\Bigr)\mu_i + \cull{ciii}\nu^i + \bigl(D_{e_i}\cull{ciii}\bigr)(v^i)^2 + \Bigl(\sum_{\stack{m,p}{m\neq i,p\neq i}}\bigl(D_{e_p}\cull{ciim}\bigr)w^mw^p\Bigr)\rho_i^2 \\
  &\phantom{=\ } {} + \biggl(\sum_{\stack m{m\neq i}}\Bigl(\bigl(D_{e_m}\cull{ciii}\bigr)+\bigl(D_{e_i}\cull{ciim}\bigr)\Bigr)w^m\biggr)\rho_iv^i + \biggl(\sum_{\stack m{m\neq i}}\cull{ciim}\Bigl(\frac{\partial}{\partial z_i}w^m\Bigr)\biggr)\rho_i .
\end{align*}
Using \eqref{eq:dziwj_fully_expanded} to re--express the last term, we obtain 
\begin{align}
\label{eq:dsimui}
  \frac{\partial}{\partial s_i}\mu_i &= \Bigl(\sum_{\makebox[0pt][c]{\tiny $\stack m{m\neq i}$}}\cull{ciim}w^m\Bigr)\mu_i + \cull{ciii}\nu^i - \Bigl(\sum_{\makebox[0pt][c]{\tiny $\stack m{m\neq i}$}}\frac{\cull{ciim}w^m}{\rho_m^2}\presup{\tau}{(i)}_m\Bigr)\rho_i^2 + \Bigl(\sum_{\makebox[0pt][c]{\tiny $\stack m{m\neq i}$}}\frac{\cull{ciim}}{\rho_m^2}\presup{\omega}{(i)}^m\Bigr)\rho_i^2 \\
\nonumber
  &\phantom{=\ } {} + \bigl(D_{e_i}\cull{ciii}\bigr)(v^i)^2 + \Bigl(\sum_{\makebox[0pt][c]{\tiny $\stack{m,p}{m\neq i,p\neq i}$}}\bigl(D_{e_p}\cull{ciim}\bigr)w^mw^p\Bigr)\rho_i^2 + \biggl(\sum_{\makebox[0pt][c]{\tiny $\stack m{m\neq i}$}}\frac{\cull{ciim}}{\lambda_m-\lambda_i}\Bigl(\sum_{\makebox[0pt][c]{\tiny $\stack{p,q}{p\neq i,q\neq i}$}}\cull{\gamma mpq}w^pw^q\Bigr)\biggr)\rho_i^2 \\
\nonumber
  &\phantom{=\ } {} + \biggl(\sum_{\makebox[0pt][c]{\tiny $\stack m{m\neq i}$}}\Bigl(\bigl(D_{e_m}\cull{ciii}\bigr)+\bigl(D_{e_i}\cull{ciim}\bigr)\Bigr)w^m\biggr)\rho_iv^i + \biggl(\sum_{\makebox[0pt][c]{\tiny $\stack m{m\neq i}$}}\frac{\cull{ciim}}{\lambda_m-\lambda_i}\Bigl(\sum_{\makebox[0pt][c]{\tiny $\stack p{p\neq i}$}}2\cull{\gamma mip}w^p\Bigr)\biggr)\rho_iv^i .
\end{align}

Similarly, we get from \eqref{eq:dsivi}, taking into account \eqref{eq:dziu}, \eqref{eq:gammaill_is_zero}, 
\begin{align*}
  \frac{\partial}{\partial s_i}\nu^i &= \frac{\partial}{\partial s_i}\Bigl(\frac{\partial}{\partial z_i}v^i\Bigr) = \frac{\partial}{\partial z_i}\Bigl(\frac{\partial}{\partial s_i}v^i\Bigr) \displaybreak[0] \\
  &= \frac{\partial}{\partial z_i}\biggl( \Bigl(\sum_{\stack m{m\neq i}}\bigl(2\cull{\gamma iim}+\cull{ciim}\bigr)w^m\Bigr)v^i + \Bigl(\sum_{\stack{l,m}{\stack{l\neq i,m\neq i}{l\neq m}}} \cull{\gamma ilm}w^lw^m \Bigr) \rho_i \biggr) \displaybreak[0] \\
  &= \Bigl(\sum_{\stack m{m\neq i}}\bigl(2D_{e_i}\cull{\gamma iim}+D_{e_i}\cull{ciim}\bigr)w^m\Bigr)(v^i)^2 + \Bigl(\sum_{\makebox[0pt][c]{\tiny $\stack{m,p}{m\neq i,p\neq i}$}}\bigl(2D_{e_p}\cull{\gamma iim}+D_{e_p}\cull{ciim}\bigr)w^mw^p\Bigr)\rho_iv^i \\
  &\phantom{=\ } {} + \biggl(\sum_{\stack m{m\neq i}}\bigl(2\cull{\gamma iim}+\cull{ciim}\bigr)\Bigl(\frac{\partial}{\partial z_i}w^m\Bigr)\biggr)v^i + \Bigl(\sum_{\stack m{m\neq i}}\bigl(2\cull{\gamma iim}+\cull{ciim}\bigr)w^m\Bigr)\Bigl(\frac{\partial}{\partial z_i}v^i\Bigr) \\
  &\phantom{=\ } {} + \Bigl(\sum_{\makebox[0pt][c]{\tiny $\stack{l,m}{\stack{l\neq i,m\neq i}{l\neq m}}$}} \bigl(D_{e_i}\cull{\gamma ilm}\bigr)w^lw^m \Bigr) \rho_iv^i + \Bigl(\sum_{\makebox[0pt][c]{\tiny $\stack{l,m,p}{\stack{l\neq i,m\neq i,p\neq i}{l\neq m}}$}} \bigl(D_{e_p}\cull{\gamma ilm}\bigr)w^lw^mw^p \Bigr) \rho_i^2 \\
  &\phantom{=\ } {} + 2\biggl(\sum_{\makebox[0pt][c]{\tiny $\stack{l,m}{\stack{l\neq i,m\neq i}{l\neq m}}$}} \cull{\gamma ilm}w^l\Bigl(\frac{\partial}{\partial z_i}w^m\Bigr) \biggr) \rho_i + \Bigl(\sum_{\makebox[0pt][c]{\tiny $\stack{l,m}{\stack{l\neq i,m\neq i}{l\neq m}}$}} \cull{\gamma ilm}w^lw^m \Bigr) \Bigl(\frac{\partial}{\partial z_i}\rho_i\Bigr) \displaybreak[0] \\
  &= \Bigl(\sum_{\makebox[0pt][c]{\tiny $\stack{l,m}{\stack{l\neq i,m\neq i}{l\neq m}}$}} \cull{\gamma ilm}w^lw^m \Bigr) \mu_i + \Bigl(\sum_{\stack m{m\neq i}}\bigl(2\cull{\gamma iim}+\cull{ciim}\bigr)w^m\Bigr) \nu^i \\
  &\phantom{=\ } {} + \Bigl(\sum_{\stack m{m\neq i}}\bigl(2D_{e_i}\cull{\gamma iim}+D_{e_i}\cull{ciim}\bigr)w^m\Bigr)(v^i)^2 + \Bigl(\sum_{\makebox[0pt][c]{\tiny $\stack{l,m,p}{\stack{l\neq i,m\neq i,p\neq i}{l\neq m}}$}} \bigl(D_{e_p}\cull{\gamma ilm}\bigr)w^lw^mw^p \Bigr) \rho_i^2 \\
  &\phantom{=\ } {} + \Bigl(\sum_{\makebox[0pt][c]{\tiny $\stack{m,p}{m\neq i,p\neq i}$}}\bigl(D_{e_i}\cull{\gamma imp}+2D_{e_p}\cull{\gamma iim}+D_{e_p}\cull{ciim}\bigr)w^mw^p\Bigr)\rho_iv^i \\
  &\phantom{=\ } {} + \biggl(\sum_{\makebox[0pt][c]{\tiny $\stack{l,m}{\stack{l\neq i,m\neq i}{l\neq m}}$}} 2\cull{\gamma ilm}w^l\Bigl(\frac{\partial}{\partial z_i}w^m\Bigr) \biggr) \rho_i + \biggl(\sum_{\stack m{m\neq i}}\bigl(2\cull{\gamma iim}+\cull{ciim}\bigr)\Bigl(\frac{\partial}{\partial z_i}w^m\Bigr)\biggr)v^i .
\end{align*}
Using \eqref{eq:dziwj_fully_expanded} to re--express the last two terms, we obtain 
\begin{align}
\label{eq:dsinui}
  \frac{\partial}{\partial s_i}\nu^i &= \Bigl(\sum_{\makebox[0pt][c]{\tiny $\stack{l,m}{\stack{l\neq i,m\neq i}{l\neq m}}$}} \cull{\gamma ilm}w^lw^m \Bigr) \mu_i + \Bigl(\sum_{\stack m{m\neq i}}\bigl(2\cull{\gamma iim}+\cull{ciim}\bigr)w^m\Bigr) \nu^i \\
\nonumber
  &\phantom{=\ } {} - \Bigl(\sum_{\makebox[0pt][c]{\tiny $\stack{l,m}{\stack{l\neq i,m\neq i}{l\neq m}}$}} \frac{2\cull{\gamma ilm}w^lw^m}{\rho_m^2}\presup{\tau}{(i)}_m\Bigr) \rho_i^2 + \Bigl(\sum_{\makebox[0pt][c]{\tiny $\stack{l,m}{\stack{l\neq i,m\neq i}{l\neq m}}$}} \frac{2\cull{\gamma ilm}w^l}{\rho_m^2}\presup{\omega}{(i)}^m\Bigr) \rho_i^2 \\
\nonumber
  &\phantom{=\ } {} - \Bigl(\sum_{\stack m{m\neq i}}\frac{\bigl(2\cull{\gamma iim}+\cull{ciim}\bigr)w^m}{\rho_m^2}\presup{\tau}{(i)}_m\Bigr)\rho_iv^i + \Bigl(\sum_{\stack m{m\neq i}}\frac{2\cull{\gamma iim}+\cull{ciim}}{\rho_m^2}\presup{\omega}{(i)}^m\Bigr)\rho_iv^i \\
\nonumber
  &\phantom{=\ } {} + \Bigl(\sum_{\stack m{m\neq i}}\bigl(2D_{e_i}\cull{\gamma iim}+D_{e_i}\cull{ciim}\bigr)w^m\Bigr)(v^i)^2 + \biggl(\sum_{\makebox[0pt][c]{\tiny $\stack m{m\neq i}$}}\frac{2\cull{\gamma iim}+\cull{ciim}}{\lambda_m-\lambda_i}\Bigl(\sum_{\makebox[0pt][c]{\tiny $\stack{p}{p\neq i}$}}2\cull{\gamma mip}w^p\Bigr)\biggr)(v^i)^2 \\
\nonumber
  &\phantom{=\ } {} + \Bigl(\sum_{\makebox[0pt][c]{\tiny $\stack{l,m,p}{\stack{l\neq i,m\neq i,p\neq i}{l\neq m}}$}} \bigl(D_{e_p}\cull{\gamma ilm}\bigr)w^lw^mw^p \Bigr) \rho_i^2 + \biggl(\sum_{\makebox[0pt][c]{\tiny $\stack{l,m}{\stack{l\neq i,m\neq i}{l\neq m}}$}} \frac{2\cull{\gamma ilm}w^l}{\lambda_m-\lambda_i}\Bigl(\sum_{\makebox[0pt][c]{\tiny $\stack{p,q}{p\neq i,q\neq i}$}}\cull{\gamma mpq}w^pw^q\Bigr) \biggr) \rho_i^2 \\
\nonumber
  &\phantom{=\ } {} + \Bigl(\sum_{\makebox[0pt][c]{\tiny $\stack{m,p}{m\neq i,p\neq i}$}}\bigl(D_{e_i}\cull{\gamma imp}+2D_{e_p}\cull{\gamma iim}+D_{e_p}\cull{ciim}\bigr)w^mw^p\Bigr)\rho_iv^i \\
\nonumber
  &\phantom{=\ } {} + \biggl(\sum_{\makebox[0pt][c]{\tiny $\stack{l,m}{\stack{l\neq i,m\neq i}{l\neq m}}$}} \frac{2\cull{\gamma ilm}w^l}{\lambda_m-\lambda_i}\Bigl(\sum_{\makebox[0pt][c]{\tiny $\stack{p}{p\neq i}$}}2\cull{\gamma mip}w^p\Bigr) \biggr) \rho_iv^i + \biggl(\sum_{\makebox[0pt][c]{\tiny $\stack m{m\neq i}$}}\frac{2\cull{\gamma iim}+\cull{ciim}}{\lambda_m-\lambda_i}\Bigl(\sum_{\makebox[0pt][c]{\tiny $\stack{p,q}{p\neq i,q\neq i}$}}\cull{\gamma mpq}w^pw^q\Bigr)\biggr)\rho_iv^i .
\end{align}

We have thus obtained a inhomogeneous system for $(\mu_i,\nu_i)$ whose coefficients depend only on first order quantities, except for some terms in the inhomogeneous part, which also depend on the second order quantities $(\presup{\tau}{(i)}_j,\presup{\omega}{(i)}^j)$ ($j\in\{1,\ldots,N\}\setminus\{i\}$). The first order quantities being already dealt with in the last section, we need to bound those second order quantities in order to obtain bounds on $(\mu_i,\nu^i)$. For this, we compute their evolution equations along $j^{\text{th}}$ characteristics inside $\mathcal R_i$ ($j\neq i$) using bi--characteristic coordinates (recall that $y_i$ is a parameter along $\mathcal C_j(z_j)$).

First, notice that from \eqref{eqs:dbccoords_partial}, we have for every $j,k\in\{1,\ldots,N\}\setminus\{i\}$, 
\begin{equation} \label{eq:partialyj_in_terms_of_partialyk}
  \frac{\partial}{\partial y_j} = \frac{\rho_j}{\lambda_i-\lambda_j}\frac{\partial}{\partial s_i} = \frac{\lambda_i-\lambda_k}{\lambda_i-\lambda_j}\frac{\rho_j}{\rho_k}\frac{\rho_k}{\lambda_i-\lambda_k}\frac{\partial}{\partial s_i} = \frac{\lambda_k-\lambda_i}{\lambda_j-\lambda_i}\frac{\rho_j}{\rho_k}\frac{\partial}{\partial y_k} .
\end{equation}
We will use this identity to pass from $(y_i,y_j)$--coordinates to $(y_i,y_k)$--coordinates. Also, \eqref{eqs:dbccoords_partial} and \eqref{eq:dsiu} imply 
\begin{equation}
\label{eq:dyju}
  \frac{\partial}{\partial y_j}u = \frac{\rho_j}{\lambda_i-\lambda_j}\Bigl(\frac{\partial}{\partial s_i}u\Bigr) = \frac{\rho_j}{\lambda_j-\lambda_i}\sum_{\stack k{k\neq i}}(\lambda_k-\lambda_i)w^ke_k ,
\end{equation}
which we will use to compute the partial derivatives with respect to $y_j$ of functions of $u$. Finally, for any $j\neq i$, we compute using \eqref{eqs:dbccoords_partial}, \eqref{eq:dsirhoi}, \eqref{eq:vi}, \eqref{eq:dsiwi}, 
\begin{align}
\label{eq:dyirhoj}
  \frac{\partial}{\partial y_i}\rho_j &= \frac{\rho_i}{\lambda_j-\lambda_i}\frac{\partial}{\partial s_j}\rho_j = \frac{\rho_i\rho_j}{\lambda_j-\lambda_i}\Bigl(\sum_p\cull{cjjp}w^p\Bigr) , \\
\nonumber
  \frac{\partial}{\partial y_i}{v^j} &= \frac{\rho_i}{\lambda_j-\lambda_i}\frac{\partial}{\partial s_j}v^j = \frac{\rho_i}{\lambda_j-\lambda_i}\frac{\partial}{\partial s_j}(\rho_jw^j) \\
\label{eq:dyivj}
  &= \frac{\rho_i\rho_j}{\lambda_j-\lambda_i}\biggl( \Bigl(\sum_p\cull{cjjp}w^p\Bigr)w^j + \sum_{p,q}\cull{\gamma jpq}w^pw^q \biggr) .
\end{align}
We are now ready to compute the evolution equations for $\presup{\tau}{(i)}_j$ and $\presup{\omega}{(i)}^j$ along a $j^{\text{th}}$ characteristic inside $\mathcal R_i$ ($j\neq i$).

From \eqref{eqs:mt_tauijomegaij}, \eqref{eq:dyirhoj}, \eqref{eq:vi}, we have 
\begin{align*}
  \frac{\partial}{\partial y_i}\presup{\tau}{(i)}_j &= \frac{\partial}{\partial y_i}\Bigl(\frac{\partial}{\partial y_j}\rho_j\Bigr) = \frac{\partial}{\partial y_j}\Bigl(\frac{\partial}{\partial y_i}\rho_j\Bigr) \displaybreak[0] \\
  &= \frac{\partial}{\partial y_j}\biggl( \frac{\rho_i\rho_j}{\lambda_j-\lambda_i}\Bigl(\sum_p\cull{cjjp}w^p\Bigr) \biggr) = -\frac{\frac{\partial}{\partial y_j}(\lambda_j-\lambda_i)}{(\lambda_j-\lambda_i)^2}\Bigl(\rho_i\rho_j\sum_p\cull{cjjp}w^p\Bigr) \\
  &\phantom{=\ } {} + \frac{\rho_j}{\lambda_j-\lambda_i}\biggl( \rho_i\sum_m\Bigl(\frac{\partial}{\partial y_j}\cull{cjjp}\Bigr)w^p + \Bigl(\sum_{\stack m{m\neq i}}\cull{cjjp}w^p\Bigr)\Bigl(\frac{\partial}{\partial y_j}\rho_i\Bigr) + \cull{cjji}\Bigl(\frac{\partial}{\partial y_j}v^i\Bigr) \biggr) \\
  &\phantom{=\ } {} + \frac{\rho_i}{\lambda_j-\lambda_i}\biggl( \Bigl(\sum_{\stack p{p\neq j}}\cull{cjjp}w^p\Bigr)\Bigl(\frac{\partial}{\partial y_j}\rho_j\Bigr) + \cull{cjjj}\Bigl(\frac{\partial}{\partial y_j}v^j\Bigr) \biggr) \\
  &\phantom{=\ } {} + \frac{\rho_i\rho_j}{\lambda_j-\lambda_i}\Biggl( \sum_{\stack p{p\neq i,p\neq j}}\frac{\cull{cjjp}}{\rho_p}\biggl(-w^p\Bigl(\frac{\partial}{\partial y_j}\rho_p\Bigr)+\Bigl(\frac{\partial}{\partial y_j}v^p\Bigr)\biggr) \Biggr)
\end{align*}
Using then \eqref{eqs:mt_tauijomegaij}, \eqref{eq:partialyj_in_terms_of_partialyk}, \eqref{eq:vi}, we can write this as 
\begin{align}
\label{eq:dyitauij}
  \frac{\partial}{\partial y_i}\presup{\tau}{(i)}_j &= \frac1{\lambda_j-\lambda_i}\biggl( \Bigl(\sum_{\makebox[0pt][c]{\tiny $\stack p{p\neq i,p\neq j}$}}\cull{cjjp}w^p\Bigr)\presup{\tau}{(i)}_j\rho_i + \cull{cjji}\presup{\tau}{(i)}_jv^i + \cull{cjjj}\presup{\omega}{(i)}^j\rho_i \biggr) \\
\nonumber
  &\phantom{=\ } {} + \frac{\rho_i\rho_j^2}{(\lambda_j-\lambda_i)^2}\Bigl( \sum_{\makebox[0pt][c]{\tiny $\stack p{p\neq i,p\neq j}$}}\frac{\cull{cjjp}}{\rho_p^2}(\lambda_p-\lambda_i)\bigl( - \presup{\tau}{(i)}_pw^p + \presup{\omega}{(i)}^p \bigr) \Bigr) \\
\nonumber
  &\phantom{=\ } {} + \frac{\rho_j^2}{(\lambda_j-\lambda_i)^2}\presup{M}{(i)}_j ,
\end{align}
where the inhomogeneous term $\frac{\rho_j^2}{(\lambda_j-\lambda_i)^2}\presup{M}{(i)}_j$, with 
\begin{align*}
  \presup{M}{(i)}_j &= -\frac{\frac{\partial}{\partial y_j}(\lambda_j-\lambda_i)}{\rho_j}\Bigl(\rho_i\sum_p\cull{cjjp}w^p\Bigr) \\
  &\phantom{=\ } {} + \frac{\lambda_j-\lambda_i}{\rho_j}\biggl( \rho_i\sum_p\Bigl(\frac{\partial}{\partial y_j}\cull{cjjp}\Bigr)w^p + \Bigl(\sum_{\stack p{p\neq i}}\cull{cjjp}w^p\Bigr)\Bigl(\frac{\partial}{\partial y_j}\rho_i\Bigr) + \cull{cjji}\Bigl(\frac{\partial}{\partial y_j}v^i\Bigr) \biggr) ,
\end{align*}
depends only on first order quantities which are bounded (cf.~\eqref{eqs:tV_J_V_U_bootstrap_results}, \eqref{eqs:other_bootstrap_results}) inside $\mathcal R_i$. This is immediate from \eqref{eq:dyju}, \eqref{eq:Dellambdak}, \eqref{eq:dyirhoj}, \eqref{eq:dyivj}, for 
\begin{align}
\label{eq:Mij}
  \presup{M}{(i)}_j &= -\Bigl( \sum_{\stack q{q\neq i}}\frac{\lambda_q-\lambda_i}{\lambda_j-\lambda_i}w^q\bigl(\cull{cjjq}-\cull{ciiq}\bigr) \Bigr)\biggl(\Bigl(\sum_{\stack p{p\neq i}}\cull{cjjp}w^p\Bigr)\rho_i + \cull{cjji}v^i \biggr) \\
\nonumber
  &\phantom{=\ } {} + \Bigl(\sum_{\makebox[0pt][c]{\tiny $\stack{p,q}{p\neq i,q\neq i}$}}\bigl(D_{e_q}\cull{cjjp}\bigr)(\lambda_q-\lambda_i)w^pw^q\Bigr)\rho_i + \Bigl(\sum_{\stack q{q\neq i}}\bigl(D_{e_q}\cull{cjji}\bigr)(\lambda_q-\lambda_i)w^q\Bigr)v^i \\
\nonumber
  &\phantom{=\ } {} - \Bigl(\sum_{\stack p{p\neq i}}\cull{cjjp}w^p\Bigr)\biggl( \Bigl(\sum_{\stack q{q\neq i}}\cull{ciiq}w^q\Bigr)\rho_i + \cull{ciii}v^i \biggr) \\
\nonumber
  &\phantom{=\ } {} - \cull{cjji}\biggl( \Bigl(\sum_{\makebox[0pt][c]{\tiny $\stack{p,q}{\stack{p\neq i,q\neq i}{p\neq q}}$}}\cull{\gamma ipq}w^pw^q\Bigr)\rho_i + \Bigl(\sum_{\stack p{p\neq i}}(2\cull{\gamma iip}+\cull{ciip})w^p\Bigr)v^i \biggr) ,
\end{align}
where we have used \eqref{eq:gammaiii}, \eqref{eq:gammaill_is_zero}, \eqref{eq:gamma_sym}.

Similarly, from \eqref{eqs:mt_tauijomegaij}, \eqref{eq:dyivj}, \eqref{eq:vi}, and using \eqref{eq:gammaiii}, \eqref{eq:gammaill_is_zero}, \eqref{eq:gamma_sym}, we have 
\begin{align*}
  \frac{\partial}{\partial y_i}\presup{\omega}{(i)}^j &= \frac{\partial}{\partial y_i}\Bigl(\frac{\partial}{\partial y_j}v^j\Bigr) = \frac{\partial}{\partial y_j}\Bigl(\frac{\partial}{\partial y_i}v^j\Bigr) \displaybreak[0] \\
  &= \frac{\partial}{\partial y_j}\Biggl( \frac{\rho_i\rho_j}{\lambda_j-\lambda_i}\biggl(\Bigl(\sum_{\stack p{p\neq j}}\cull{cjjp}w^p\Bigr)w^j + \sum_{\stack{p,q}{p\neq q}}\cull{\gamma jpq}w^pw^q \biggr) \Biggr) \displaybreak[0] \\
  &= -\frac{\frac{\partial}{\partial y_j}(\lambda_j-\lambda_i)}{(\lambda_j-\lambda_i)^2}\Biggl(\rho_i\rho_j\biggl(\Bigl(\sum_{\stack p{p\neq j}}\cull{cjjp}w^p\Bigr)w^j+\sum_{\stack{p,q}{p\neq q}}\cull{\gamma jpq}w^pw^q\Bigr)\biggr)\Biggr) \\
  &\phantom{=\ } {} + \frac{\rho_i\rho_j}{\lambda_j-\lambda_i} \Biggl( \biggl(\sum_{\stack p{p\neq j}}\Bigl(\frac{\partial}{\partial y_j}\cull{cjjp}\Bigr)w^p\biggr)w^j + \biggl(\sum_{\stack{p,q}{p\neq q}}\Bigl(\frac{\partial}{\partial y_j}\cull{\gamma jpq}\Bigr)w^pw^q \biggr) \Biggr) \\
  &\phantom{=\ } {} + \frac{\rho_j}{\lambda_j-\lambda_i} \Biggl( \biggl(\Bigl(\sum_{\stack p{p\neq i,p\neq j}}\cull{cjjp}w^p\Bigr)w^j + \sum_{\stack{p,q}{\stack{p\neq i,q\neq i}{p\neq q}}}\cull{\gamma jpq}w^pw^q \biggr)\Bigl(\frac{\partial}{\partial y_j}\rho_i\Bigr) \Biggr) \\
  &\phantom{=\ } {} + \frac{\rho_j}{\lambda_j-\lambda_i} \biggl( \Bigl( \cull{cjji}w^j + \sum_{\stack{p}{p\neq i}}2\cull{\gamma jip}w^p \Bigr)\Bigl(\frac{\partial}{\partial y_j}v^i\Bigr) \biggr) \\
  &\phantom{=\ } {} + \frac{\rho_i}{\lambda_j-\lambda_i} \biggl( \Bigl(\sum_{\stack{p,q}{\stack{p\neq j,q\neq j}{p\neq q}}}\cull{\gamma jpq}w^pw^q \Bigr)\Bigl(\frac{\partial}{\partial y_j}\rho_j\Bigr) \biggr) \\
  &\phantom{=\ } {} + \frac{\rho_i}{\lambda_j-\lambda_i} \biggl( \Bigl(\sum_{\stack p{p\neq j}}\cull{cjjp}w^p + \sum_{\stack p{p\neq j}}2\cull{\gamma jjp}w^p \Bigr)\Bigl(\frac{\partial}{\partial y_j}v^j\Bigr) \biggr) \\
  &\phantom{=\ } {} + \frac{\rho_i\rho_j}{\lambda_j-\lambda_i}\bigggl(\Biggl(\sum_{\stack p{p\neq i,p\neq j}}\frac{\cull{cjjp}}{\rho_p}\biggl(-w^p\Bigl(\frac{\partial}{\partial y_j}\rho_p\Bigr)+\Bigl(\frac{\partial}{\partial y_j}v^p\Bigr)\biggr)\Biggr)w^j \bigggr) \\
  &\phantom{=\ } {} + \frac{\rho_i\rho_j}{\lambda_j-\lambda_i}\Biggl( \sum_{\stack{\stack{p,q}{p\neq q}}{p\neq i,p\neq j}}\frac{2\cull{\gamma jpq}}{\rho_p}w^q\biggl(-w^p\Bigl(\frac{\partial}{\partial y_j}\rho_p\Bigr)+\Bigl(\frac{\partial}{\partial y_j}v^p\Bigr)\biggr) \Biggr)
\end{align*}
With \eqref{eqs:mt_tauijomegaij}, \eqref{eq:partialyj_in_terms_of_partialyk}, \eqref{eq:vi}, we write this as 
\begin{align}
\label{eq:dyiomegaij}
  \frac{\partial}{\partial y_i}\presup{\omega}{(i)}^j &= \frac1{\lambda_j-\lambda_i} \biggl( \Bigl(\sum_{\makebox[0pt][c]{\tiny $\stack{\stack{p,q}{p\neq q}}{\stack{p\neq i,q\neq i}{p\neq j,q\neq j}}$}}\cull{\gamma jpq}w^pw^q \Bigr)\presup{\tau}{(i)}_j\rho_i + \Bigl(\sum_{\makebox[0pt][c]{\tiny $\stack p{p\neq i,p\neq j}$}}2\cull{\gamma jip}w^p\Bigr)\presup{\tau}{(i)}_jv^i \biggr) \\
\nonumber
  &\phantom{=\ } {} + \frac1{\lambda_j-\lambda_i} \biggl( \Bigl(\sum_{\makebox[0pt][c]{\tiny $\stack p{p\neq i,p\neq j}$}}\bigl(\cull{cjjp}+2\cull{\gamma jjp}\bigr)w^p \Bigr)\presup{\omega}{(i)}^j\rho_i + \bigl(\cull{cjji}+2\cull{\gamma jji}\bigr)\presup{\omega}{(i)}^jv^i \biggr) \\
\nonumber
  &\phantom{=\ } {} + \frac{\rho_i\rho_j^2}{(\lambda_j-\lambda_i)^2}\biggl(\Bigl(\sum_{\makebox[0pt][c]{\tiny $\stack p{p\neq i,p\neq j}$}}\frac{\cull{cjjp}+2\cull{\gamma jjp}}{\rho_p^2}(\lambda_p-\lambda_i)\bigl( - \presup{\tau}{(i)}_pw^p + \presup{\omega}{(i)}^p \bigr)\Bigr)w^j \biggr) \\
\nonumber
  &\phantom{=\ } {} + \frac{\rho_i\rho_j^2}{(\lambda_j-\lambda_i)^2}\Bigl( \sum_{\makebox[0pt][c]{\tiny $\stack{\stack{p,q}{p\neq q}}{\stack{p\neq i,q\neq i}{p\neq j,q\neq j}}$}}\frac{2\cull{\gamma jpq}}{\rho_p^2}w^q(\lambda_p-\lambda_i)\bigl( - \presup{\tau}{(i)}_pw^p + \presup{\omega}{(i)}^p \bigr) \Bigr) \\
\nonumber
  &\phantom{=\ } {} + \frac{\rho_j^2}{(\lambda_j-\lambda_i)^2}\biggl(\Bigl( \sum_{\makebox[0pt][c]{\tiny $\stack p{p\neq i,p\neq j}$}}\frac{2\cull{\gamma jip}}{\rho_p^2}(\lambda_p-\lambda_i)\bigl( - \presup{\tau}{(i)}_pw^p + \presup{\omega}{(i)}^p \bigr) \Bigr)v^i\biggr) \\
\nonumber
  &\phantom{=\ } {} + \frac{\rho_j^2}{(\lambda_j-\lambda_i)^2}\presup{N}{(i)}^j ,
\end{align}
where 
\begin{align*}
  \presup{N}{(i)}^j &= -\frac{\frac{\partial}{\partial y_j}(\lambda_j-\lambda_i)}{\rho_j}\Biggl(\rho_i\biggl(\Bigl(\sum_{\stack p{p\neq j}}\cull{cjjp}w^p\Bigr)w^j+\sum_{\stack{p,q}{p\neq q}}\cull{\gamma jpq}w^pw^q\biggr)\Biggr) \\
  &\phantom{=\ } {} + \frac{\lambda_j-\lambda_i}{\rho_j} \Biggl( \rho_i\biggl(\sum_{\stack p{p\neq j}}\Bigl(\frac{\partial}{\partial y_j}\cull{cjjp}\Bigr)w^p\biggr)w^j + \rho_i\biggl(\sum_{\stack{p,q}{p\neq q}}\Bigl(\frac{\partial}{\partial y_j}\cull{\gamma jpq}\Bigr)w^pw^q \biggr) \Biggr) \\
  &\phantom{=\ } {} + \frac{\lambda_j-\lambda_i}{\rho_j} \Biggl( \biggl(\Bigl(\sum_{\stack p{p\neq i,p\neq j}}\cull{cjjp}w^p\Bigr)w^j + \sum_{\stack{p,q}{\stack{p\neq i,q\neq i}{p\neq q}}}\cull{\gamma jpq}w^pw^q \biggr)\Bigl(\frac{\partial}{\partial y_j}\rho_i\Bigr) \Biggr) \\
  &\phantom{=\ } {} + \frac{\lambda_j-\lambda_i}{\rho_j} \biggl( \Bigl( \cull{cjji}w^j + \sum_{\stack{p}{p\neq i}}2\cull{\gamma jip}w^p \Bigr)\Bigl(\frac{\partial}{\partial y_j}v^i\Bigr) \biggr) .
\end{align*}
Again, the inhomogeneous term $\frac{\rho_j^2}{(\lambda_j-\lambda_i)^2}\presup{N}{(i)}^j$ in \eqref{eq:dyiomegaij} depends only on first order quantities which are bounded in $\mathcal R_i$, since \eqref{eq:dyju}, \eqref{eq:Dellambdak}, \eqref{eq:dyirhoj}, \eqref{eq:dyivj}, \eqref{eq:gammaiii}, \eqref{eq:gammaill_is_zero}, \eqref{eq:gamma_sym} imply 
\begin{align}
\label{eq:Nij}
  \presup{N}{(i)}^j &= -\Bigl( \sum_{\stack r{r\neq i}}\frac{\lambda_r-\lambda_i}{\lambda_j-\lambda_i}w^r\bigl(\cull{cjjr}-\cull{ciir}\bigr) \Bigr)\Biggl( \biggl( \Bigl(\sum_{\makebox[0pt][c]{\tiny $\stack p{p\neq i,p\neq j}$}}\cull{cjjp}w^p\Bigr)w^j+\sum_{\makebox[0pt][c]{\tiny $\stack{p,q}{\stack{p\neq i,q\neq i}{p\neq q}}$}}\cull{\gamma jpq}w^pw^q\biggr)\rho_i\Biggr) \\
\nonumber
  &\phantom{=\ } {} -\Bigl( \sum_{\stack r{r\neq i}}\frac{\lambda_r-\lambda_i}{\lambda_j-\lambda_i}w^r\bigl(\cull{cjjr}-\cull{ciir}\bigr) \Bigr)\biggl( \Bigl(\cull{cjji}w^j+\sum_{\makebox[0pt][c]{\tiny $\stack p{p\neq i}$}}2\cull{\gamma jip}w^p\Bigr)v^i \biggr) \\
\nonumber
  &\phantom{=\ } {} + \biggl( \Bigl(\sum_{\makebox[0pt][c]{\tiny $\stack{p,r}{\stack{p\neq i,r\neq i}{p\neq j}}$}}\bigl(D_{e_r}\cull{cjjp}\bigr)(\lambda_r-\lambda_i)w^pw^r\Bigr)w^j + \sum_{\makebox[0pt][c]{\tiny $\stack{p,q,r}{\stack{p\neq i,q\neq i}{r\neq i,p\neq q}}$}}\bigl(D_{e_r}\cull{\gamma jpq}\bigr)(\lambda_r-\lambda_i)w^pw^qw^r \biggr)\rho_i \\
\nonumber
  &\phantom{=\ } {} + \biggl( \Bigl(\sum_{\makebox[0pt][c]{\tiny $\stack r{r\neq i}$}}\bigl(D_{e_r}\cull{cjji}\bigr)(\lambda_r-\lambda_i)w^r\Bigr)w^j + \sum_{\makebox[0pt][c]{\tiny $\stack{p,r}{p\neq i,r\neq i}$}}\bigl(2D_{e_r}\cull{\gamma jip}\bigr)(\lambda_r-\lambda_i)w^pw^r \biggr)v^i \\
\nonumber
  &\phantom{=\ } {} - \biggl(\Bigl(\sum_{\makebox[0pt][c]{\tiny $\stack p{p\neq i,p\neq j}$}}\cull{cjjp}w^p\Bigr)w^j + \sum_{\makebox[0pt][c]{\tiny $\stack{p,q}{\stack{p\neq i,q\neq i}{p\neq q}}$}}\cull{\gamma jpq}w^pw^q \biggr)\biggl( \Bigl(\sum_{\makebox[0pt][c]{\tiny $\stack r{r\neq i}$}}\cull{ciir}w^r\Bigr)\rho_i + \cull{ciii}v^i \biggr) \\
\nonumber
  &\phantom{=\ } {} - \Bigl( \cull{cjji}w^j + \sum_{\makebox[0pt][c]{\tiny $\stack r{r\neq i}$}}2\cull{\gamma jir}w^r \Bigr)\biggl( \Bigl(\sum_{\makebox[0pt][c]{\tiny $\stack{p,q}{\stack{p\neq i,q\neq i}{p\neq q}}$}}\cull{\gamma ipq}w^pw^q\Bigr)\rho_i + \Bigl(\sum_{\stack p{p\neq i}}(2\cull{\gamma iip}+\cull{ciip})w^p\Bigr)v^i \biggr) .
\end{align}

We are now ready to show statement \eqref{thm:main_thm_secondorder} of Theorem~\ref{thm:main_thm}.

\subsection{The bounds on the second order quantities} \label{subsec:2ndorder_bounds}

We first establish the boundedness of $\frac{\partial}{\partial x}w^i$ in $\rz\times[0,t_\star)\setminus\bigcup_k\mathcal R_k$. This, together with a bound on $\frac{\partial}{\partial x}\rho_i$ in the same region, will yield bounded initial conditions at the boundary of $\mathcal R_i$ for $\presup{\tau}{(i)}_j$ and $\presup{\omega}{(i)}^j$, which are necessary to show the boundedness of the latter. This will then finally yield bounded inhomogeneous terms in the evolution equations along $i^{\text{th}}$ characteristics for $\mu_i$ and $\nu_i$ inside $\mathcal R_i$, making it possible to show bounds on those quantities, too.

Our starting point is the evolution equation \eqref{eq:dsidxwi} for $\frac{\partial}{\partial x}w^i$ along an $i^{\text{th}}$ characteristic. For $i\in\{1,\ldots,N\}$, let $\mathcal S_i$ denote the region in $\mathcal R_i$ where every $j^{\text{th}}$ characteristic ($j\neq i$) stays inside $\mathcal R_i$ for increasing times, i.e., 
\begin{multline} \label{eq:mathcalSi}
  \mathcal S_i = \Bigl\{ (x,t)\in \mathcal R_i \Bigm| \forall j\in\{1,\ldots,N\}\setminus\{i\} \text{ we have:} \\
  \text{if } z_j \text{ is such that } X_j(z_j,t)=x , \text{ then } \bigl(X_j(z_j,s),s\bigr)\in \mathcal R_i, \forall s\in[t,t_\star) \Bigr\} .
\end{multline}
Now define 
\begin{equation} \label{eq:Rchecki}
  \widecheck{\mathcal R}_i = \mathcal R_i \setminus \mathcal S_i .
\end{equation}
Then, for every point in $\widecheck{\mathcal R}_i$, there is a $j\in\{1,\ldots,N\}\setminus\{i\}$ such that the $j^{\text{th}}$ characteristic that passes through that point has to leave $\mathcal R_i$ before $t_\star$. As a consequence, we obtain from \eqref{eq:rhoi_lwr_bd_Cj_cap_Ri}, that 
\begin{equation} \label{eq:rhoi_bdd_in_Rchecki}
  \frac1{\rho_i} = \mathcal O\Bigl(\frac1\vartheta\Bigr) \qquad\text{ in } \widecheck{\mathcal R}_i .
\end{equation}
By \eqref{eq:vi}, \eqref{eqs:tV_J_V_U_bootstrap_results} and \eqref{eq:W0_is_Otheta}, this implies that 
\begin{equation} \label{eq:wi_bdd_in_Rchecki}
  w^i = \mathcal O\Bigl( \frac{W_0}{\vartheta} \Bigr) = \mathcal O(1) \qquad\text{ in } \widecheck{\mathcal R}_i .
\end{equation}
Moreover, by \eqref{eqs:tV_J_V_U_bootstrap_results}, recall that $w^i = \mathcal O(\vartheta^2)$ outside $\mathcal R_i$. We conclude that, for all $i\in\{1,\ldots,N\}$, 
\begin{equation} \label{eq:wi_bdd_outside_mathcalSi}
  w^i = \mathcal O(1) \qquad\text{ in } \rz\times[0,t_\star)\setminus \mathcal S_i .
\end{equation}
Now fix $(x,t)\in\rz\times[0,t_\star)\setminus \bigcup_k\mathcal S_k=\bigl(\rz\times[0,t_\star)\setminus\bigcup_k \mathcal R_k\bigr)\bigcup\bigl(\bigcup_l\widecheck{\mathcal R}_l\bigr)$, and assume $i$ and $z_i$ are such that $X_i(z_i,t)=x$ (i.e., $\mathcal C_i(z_i)$ passes through $(x,t)$). Then 
\begin{equation*}
  \Bigl\{ \bigl(X_i(z_i,s),s\bigr) \Bigm| s\in[0,t] \Bigr\} \bigcap \Bigl( \bigcup_k \mathcal S_k \Bigr) = \emptyset .
\end{equation*}
This means that we can integrate \eqref{eq:dsidxwi} along $\mathcal C_i(z_i)$ enjoying bounds on all the coefficients.

Indeed, similarly to \eqref{eq:Vt}, consider 
\begin{equation} \label{eq:Vprimet}
  V'(t) = \max_i \sup_{\stack{(x',t')\not\in\bigcup_k\mathcal S_k}{0\leq t'\leq t}} \Bigl|\frac{\partial}{\partial x}w^i(x',t')\Bigr| .
\end{equation}
Notice that $V'$ is non--decreasing (and, actually, even continuous) in $t$. By \eqref{eq:dsidxwi} and the preceding argumentation, we have for all $i\in\{1,\ldots,N\}$, as well as for every $x$ and almost every $t$ such that $(x,t)\in\rz\times[0,t_\star)\setminus\bigcup_k\mathcal S_k$, 
\begin{equation*}
  \frac{\partial}{\partial s_i}\Bigabs{\frac{\partial}{\partial x}w^i(x,t)} \leq \biggabs{\frac{\partial}{\partial s_i}\Bigl(\frac{\partial}{\partial x}w^i(x,t)\Bigr)} \leq C V'(t) + D ,
\end{equation*}
where $C$ and $D$ are constants, depending only on $a$, $\delta$ and the $C^{K,1}$--norm of $f_0$. We can thus argue entirely analogously to how we obtained \eqref{eq:wiabs_initialestimate} to conclude that 
\begin{equation} \label{eq:Vprimet_bd}
  V'(t) \leq e^{Ct}\Bigl(V'(0)+\frac DC\Bigr) - \frac DC ,
\end{equation}
where $V'(0)$ is bounded by the fact that $f_0$ is $C^K$ and has compact support in $I_0=[-1,1]$. Since $t$ is finite by \eqref{eq:tstar_range}, \eqref{eq:T0}, we thus established that, indeed, $\frac{\partial}{\partial x}w^i$ is bounded for all $i$ in the region $\rz\times[0,t_\star)\setminus\bigcup_k\mathcal S_k$. In particular, $\frac{\partial}{\partial x}w^i$ remains also bounded outside all the strips, as claimed in Theorem~\ref{thm:main_thm}~\eqref{thm:main_thm_secondorder}.

Notice that, since $\rho_i=\mathcal O(1)$ by \eqref{eqs:other_bootstrap_results} and \eqref{eq:rhoi_range_outside_Ri}, the foregoing observations and \eqref{eq:dsidxrhoi} immediately entail the boundedness of $\frac{\partial}{\partial x}\rho_i$ in $\rz\times[0,t_\star)\setminus\bigcup_k\mathcal S_k$ and thus in $\rz\times[0,t_\star)\setminus\bigcup_k\mathcal R_k$ ($\forall i\in\{1,\ldots,N\}$). Note, however, that the bounds on $\frac{\partial}{\partial x}w^i$ and $\frac{\partial}{\partial x}\rho_i$ get worse, the smaller we pick $\vartheta$. Indeed, they blow up exponentially as $\vartheta$ goes to zero (see \eqref{eq:Vprimet_bd}, \eqref{eq:T0} and \eqref{eq:W0p_pos}).

We now focus our attention on $\bigl\{\presup{\tau}{(i)}_j\bigr\}_{j\neq i}$ and $\bigl\{\presup{\omega}{(i)}^j\bigr\}_{j\neq i}$ inside $\mathcal R_i$. So fix $i\in\{1,\ldots,N\}$. If we want to integrate \eqref{eq:dyitauij} and \eqref{eq:dyiomegaij} along a $j^{\text{th}}$ characteristic, we will need the initial values of $\presup{\tau}{(i)}_j$ and $\presup{\tau}{(i)}^j$ at the boundary of $\mathcal R_i$ (recall that $y_i$ serves as a parameter for $\mathcal C_j$), i.e., along $\mathcal C_i(-1)$ and $\mathcal C_i(1)$. But, by \eqref{eqs:dbccoords_partial}, \eqref{eqs:dccoords}
\begin{equation*}
  \frac{\partial}{\partial y_j} = \rho_j\frac{\partial}{\partial x} + \frac{\rho_j}{\lambda_i-\lambda_j}\frac{\partial}{\partial s_j} .
\end{equation*}
As a consequence, the bounds on $\frac{\partial}{\partial x}\rho_j$ and $\frac{\partial}{\partial x}w^j$ just established, as well as \eqref{eq:dsirhoi}, \eqref{eq:dsiwi}, \eqref{eq:rhoi_range_outside_Ri}, \eqref{eq:wi_bdd_in_Rchecki} and \eqref{eq:sigma_pos} imply by continuity that $\presup{\tau}{(i)}_j$ and $\presup{\omega}{(i)}^j$ are bounded on $\mathcal C_i(-1)\bigcup \mathcal C_i(1)$, for all $j\in\{1,\ldots,N\}\setminus\{i\}$. For later reference, let us write 
\begin{equation} \label{eqs:Gbari0Hbari0} 
  \begin{split}
    \presup{\overline G}{(i)}_0 &= \max_{j\neq i} \sup_{(x',t')\in \mathcal C_i(-1)\bigcup \mathcal C_i(1)} \bigabs{\presup{\tau}{(i)}_j(x',t')} , \\
    \presup{\overline H}{(i)}_0 &= \max_{j\neq i} \sup_{(x',t')\in \mathcal C_i(-1)\bigcup \mathcal C_i(1)} \bigabs{\presup{\omega}{(i)}^j(x',t')} ,
  \end{split}
\end{equation}
which are thus both finite. Similarly, define 
\begin{equation} \label{eqs:GbariHbari} 
  \begin{split}
    \presup{\overline G}{(i)} &= \max_{j\neq i} \sup_{(x',t')\in \mathcal R_i} \bigabs{\presup{\tau}{(i)}_j(x',t')} , \\
    \presup{\overline H}{(i)} &= \max_{j\neq i} \sup_{(x',t')\in \mathcal R_i} \bigabs{\presup{\omega}{(i)}^j(x',t')} .
  \end{split}
\end{equation}
Our aim is to show that both these quantities are finite. Fixing $j\in\{1,\ldots,N\}\setminus\{i\}$, we have from \eqref{eq:dyitauij} and \eqref{eq:dyiomegaij}, respectively, that, along any $j^{\text{th}}$ characteristic inside $\mathcal R_i$, 
\begin{equation} \label{eqs:tauijomegaij_system}
  \begin{split}
    \frac{\partial}{\partial y_i}\presup{\tau}{(i)}_j &= \sum_{\stack p{p\neq i}} \presup{A}{(i)}_j^p\presup{\tau}{(i)}_p + \sum_{\stack p{p\neq i}} \presup{B}{(i)}_p^j\presup{\omega}{(i)}^p + \presup{M}{(i)}_j , \\
    \frac{\partial}{\partial y_i}\presup{\omega}{(i)}^j &= \sum_{\stack p{p\neq i}} \presup{E}{(i)}_j^p\presup{\tau}{(i)}_p + \sum_{\stack p{p\neq i}} \presup{F}{(i)}_p^j\presup{\omega}{(i)}^p + \presup{N}{(i)}_j ,
  \end{split}
\end{equation}
where the coefficients are, by \eqref{eqs:tV_J_V_U_bootstrap_results}, \eqref{eqs:other_bootstrap_results} and \eqref{eq:rhoi_range_outside_Ri}, at least $\mathcal O(\vartheta^2)$, except for $\presup{B}{(i)}_p^j$ and $\presup{F}{(i)}_p^j$ ($p\in\{1,\ldots,N\}\setminus\{i\}$), which are $\mathcal O(1)$ and $\mathcal O(\vartheta)$, respectively. Set 
\begin{equation} \label{eqs:Abari_etc}
  \begin{split}
    \presup{\overline A}{(i)} &= 2\max_{\stack{j,p}{j\neq i,p\neq i}} \sup_{(x',t')\in \mathcal R_i} \bigabs{ \presup{A}{(i)}_j^p(x',t') } , \\
    \presup{\overline B}{(i)} &= 2\max_{\stack{j,p}{j\neq i,p\neq i}} \sup_{(x',t')\in \mathcal R_i} \bigabs{ \presup{B}{(i)}_p^j(x',t') } , \\
    \presup{\overline M}{(i)} &= 2\max_{\stack{j}{j\neq i}} \sup_{(x',t')\in \mathcal R_i} \bigabs{ \presup{M}{(i)}_j(x',t') } , \\
    \presup{\overline E}{(i)} &= 2\max_{\stack{j,p}{j\neq i,p\neq i}} \sup_{(x',t')\in \mathcal R_i} \bigabs{ \presup{E}{(i)}_j^p(x',t') } , \\
    \presup{\overline F}{(i)} &= 2\max_{\stack{j,p}{j\neq i,p\neq i}} \sup_{(x',t')\in \mathcal R_i} \bigabs{ \presup{F}{(i)}_p^j(x',t') } , \\
    \presup{\overline N}{(i)} &= 2\max_{\stack{j}{j\neq i}} \sup_{(x',t')\in \mathcal R_i} \bigabs{ \presup{N}{(i)}^j(x',t') } ,
  \end{split}
\end{equation}
which are all $\mathcal O(\vartheta^2)$, except for $\presup{\overline B}{(i)}$ and $\presup{\overline F}{(i)}$, which are $\mathcal O(1)$ and $\mathcal O(\vartheta)$, respectively. Then integrating along $\mathcal C_j$ yields 
\begin{equation*}
  \begin{split}
    \presup{\tau}{(i)}_j &\leq \presup{\overline G}{(i)}_0 + \presup{\overline A}{(i)}\presup{\overline G}{(i)} + \presup{\overline B}{(i)}\presup{\overline H}{(i)} + \presup{\overline M}{(i)} , \\
    \presup{\omega}{(i)}^j &\leq \presup{\overline H}{(i)}_0 + \presup{\overline E}{(i)}\presup{\overline G}{(i)} + \presup{\overline F}{(i)}\presup{\overline H}{(i)} + \presup{\overline N}{(i)} , 
  \end{split}
\end{equation*}
where we have used that, along $\mathcal C_j\bigcap \mathcal R_i$, we have $y_i\in[-1,1]$. Taking supremums over $\mathcal R_i$, we obtain 
\begin{equation*}
  \begin{split}
    \presup{\overline G}{(i)} &\leq \presup{\overline G}{(i)}_0 + \presup{\overline A}{(i)}\presup{\overline G}{(i)} + \presup{\overline B}{(i)}\presup{\overline H}{(i)} + \presup{\overline M}{(i)} , \\
     \presup{\overline H}{(i)} &\leq \presup{\overline H}{(i)}_0 + \presup{\overline E}{(i)}\presup{\overline G}{(i)} + \presup{\overline F}{(i)}\presup{\overline H}{(i)} + \presup{\overline N}{(i)} .
  \end{split}
\end{equation*}
But since $\presup{\overline A}{(i)},\presup{\overline M}{(i)},\presup{\overline E}{(i)},\presup{\overline N}{(i)}=\mathcal O(\vartheta^2)$, while $\presup{\overline B}{(i)}=\mathcal O(1)$ and $\presup{\overline F}{(i)}=\mathcal O(\vartheta)$, and since $\presup{\overline G}{(i)}_0$, $\presup{\overline H}{(i)}_0$ are bounded, we can re--write this as
\begin{equation*}
  \begin{split}
    \presup{\overline G}{(i)} &= \mathcal O\bigl( \presup{\overline G}{(i)}_0 + \vartheta^2\presup{\overline G}{(i)} + \presup{\overline H}{(i)} \bigr) , \\
    \presup{\overline H}{(i)} &= \mathcal O\bigl( \presup{\overline H}{(i)}_0 + \vartheta^2\presup{\overline G}{(i)} + \vartheta\presup{\overline H}{(i)} \bigr) ,
  \end{split}
\end{equation*}
which, for sufficiently small $\vartheta$, reads 
\begin{equation*}
  \begin{split}
  \presup{\overline G}{(i)} &= \mathcal O\bigl( \presup{\overline G}{(i)}_0 + \presup{\overline H}{(i)} \bigr) , \\
  \presup{\overline H}{(i)} &= \mathcal O\bigl( \presup{\overline H}{(i)}_0 + \vartheta^2\presup{\overline G}{(i)} \bigr) .
  \end{split}
\end{equation*}
Inserting the first line into the second yields 
\begin{equation*}
  \begin{split}
  \presup{\overline G}{(i)} &= \mathcal O\bigl( \presup{\overline G}{(i)}_0 + \presup{\overline H}{(i)} \bigr) , \\
  \presup{\overline H}{(i)} &= \mathcal O\bigl( \presup{\overline H}{(i)}_0 + \vartheta^2\presup{\overline G}{(i)}_0 + \vartheta^2\presup{\overline H}{(i)} \bigr) = \mathcal O\bigl( \presup{\overline G}{(i)}_0 + \presup{\overline H}{(i)}_0 \bigr) ,
  \end{split}
\end{equation*}
for sufficiently small $\vartheta$. As a result, we obtain 
\begin{equation} \label{eqs:GbarHbar_bdd}
  \presup{\overline G}{(i)} = \mathcal O\bigl( \presup{\overline G}{(i)}_0 + \presup{\overline H}{(i)}_0 \bigr) \qquad\text{ and }\qquad \presup{\overline H}{(i)} = \mathcal O\bigl( \presup{\overline G}{(i)}_0 + \presup{\overline H}{(i)}_0 \bigr) ,
\end{equation}
so that $\presup{\tau}{(i)}_j$ and $\presup{\omega}{(i)}^j$ are bounded, as stated in Theorem~\ref{thm:main_thm}~\eqref{thm:main_thm_secondorder}.

At last, we are ready to show the boundedness of $\mu_i$, $\nu^i$ inside $\mathcal R_i$. In order to do this, we wish to integrate \eqref{eq:dsimui} and \eqref{eq:dsinui} along an $i^{\text{th}}$ characteristic inside $\mathcal R_i$. We can re--write the evolution equations as 
\begin{equation} \label{eqs:muinui_system}
  \begin{split}
    \frac{\partial}{\partial s_i}\mu_i &= A_{\mu_i} \mu_i + B_{\mu_i} \nu^i + C_{\mu_i} , \\
    \frac{\partial}{\partial s_i}\nu^i &= A_{\nu^i} \mu_i + B_{\nu^i} \nu^i + C_{\nu^i} , 
  \end{split}
\end{equation}
with bounded (see \eqref{eqs:tV_J_V_U_bootstrap_results}, \eqref{eqs:other_bootstrap_results}, \eqref{eq:rhoi_range_outside_Ri}, \eqref{eqs:GbarHbar_bdd}) coefficients.
Setting 
\begin{equation} \label{eqs:MitNit}
    M_i(t) = \sup_{\stack{(z_i',s_i')}{\stack{z_i'\in I_0}{0\leq s_i'\leq t}}} \bigabs{\mu_i(z_i',s_i')} \qquad\text{ and }\qquad N^i(t) = \sup_{\stack{(z_i',s_i')}{\stack{z_i'\in I_0}{0\leq s_i'\leq t}}} \bigabs{\nu^i(z_i',s_i')} ,
\end{equation}
which are non--decreasing in $t$, as well as
\begin{equation} \label{eqs:Abarmui_etc}
  \begin{split}
    \overline A_{\mu_i} &= \sup_{(x',t')\in \mathcal R_i} \bigabs{A_{\mu_i}(x',t')} , \\
    \overline A_{\nu^i} &= \sup_{(x',t')\in \mathcal R_i} \bigabs{A_{\nu^i}(x',t')} , \\
    \overline B_{\mu_i} &= \sup_{(x',t')\in \mathcal R_i} \bigabs{B_{\mu_i}(x',t')} , \\
    \overline B_{\nu^i} &= \sup_{(x',t')\in \mathcal R_i} \bigabs{B_{\nu^i}(x',t')} , \\
    \overline C_{\mu_i} &= \sup_{(x',t')\in \mathcal R_i} \bigabs{C_{\mu_i}(x',t')} , \\
    \overline C_{\nu^i} &= \sup_{(x',t')\in \mathcal R_i} \bigabs{C_{\nu^i}(x',t')} ,
  \end{split}
\end{equation}
we obtain, for almost every $t$, 
\begin{equation*}
  \begin{split}
    \frac{\partial}{\partial s_i}\bigabs{\mu_i(z_i,t)} &\leq \Bigabs{\frac{\partial}{\partial s_i}\mu_i(z_i,t)} \leq \overline A_{\mu_i} M_i(t) + \overline B_{\mu_i} N^i(t) + \overline C_{\mu_i} , \\
    \frac{\partial}{\partial s_i}\bigabs{\nu^i(z_i,t)} &\leq \Bigabs{\frac{\partial}{\partial s_i}\nu^i(z_i,t)} \leq \overline A_{\nu^i} M_i(t) + \overline B_{\nu^i} N^i(t) + \overline C_{\nu^i} .
  \end{split}
\end{equation*}
Integrating along $\mathcal C_i$ and subsequently varying $z_i\in I_0$, we arrive at 
\begin{equation} \label{eqs:MitNit_system}
  \begin{split}
    M_i(t) &\leq \overline A_{\mu_i} \int_0^t M_i(s) ds + \overline B_{\mu_i} \int_0^t N^i(s) ds + \overline C_{\mu_i}t , \\
    N^i(t) &\leq \overline A_{\nu^i} \int_0^t M_i(s) ds + \overline B_{\nu^i} \int_0^t N^i(s) ds + \overline C_{\nu^i}t + N^i(0) ,
  \end{split}
\end{equation}
where we have used that $\frac{\partial}{\partial z_i}\rho_i(z_i,0) = \frac{\partial}{\partial x}\rho_i(z_i,0) = 0$, since $\rho_i(z_i,0)=1$. Note also that $N^i(0)\leq V'(0)$ (cf.~\eqref{eq:Vprimet}). To show that $M_i$, $N^i$ are bounded on $[0,t_\star)$, we make use of the second statement in Lemma~\ref{lem:sG}, for $t<t_\star\leq\overline T_0$ is finite (cf.~\eqref{eq:tstar_range},\eqref{eq:T0}).

This, therefore, establishes the rest of Theorem~\ref{thm:main_thm}~\eqref{thm:main_thm_secondorder}.

\section{Application to electromagnetic plane waves in non--linear crystals} \label{sec:application}

This section is devoted to the application of the theory established so far to the case of plane waves in a non--linear crystal. We assume the material to be birefringent, non--dispersive, non--magnetic and exhibiting electric third order non--linearities in the energy density. We begin by recalling quickly the relation between the Lagrangian and the energy density in the electromagnetic theory, from which one can recover the Maxwell equations. After restricting to the aforementioned type of material and plane waves, we proceed to exposing under what assumptions our theory is applicable. Subsequently, we show the existence of Riemann invariants in a special case, showing thereafter their non--existence for generic third order non--linearities. As an experimental setup would require to deal with a material boundary, we end the section with a quick description of that situation. We then solve in full (albeit only implicitly) the special case above, and comment briefly on the problem under generic conditions.

\subsection{The relevant equations} \label{subsec:equations}

Inside matter, the Maxwell equations are 
\begin{equation} \label{eqs:Maxwell_full}
  \begin{aligned}
    \div B &= 0 , & \div D &= \rho , \\
    \frac{\partial}{\partial t} B + \curl E &= 0 , & \frac{\partial}{\partial t}D - \curl H &= -J ,
  \end{aligned}
\end{equation}
where $E$, $B$, $D$, $H$, $\rho$ and $J$ denote the electric and the magnetic field, the electric and the magnetic displacement, the density of free charges and the density of free currents, respectively. These equations can be obtained from a variational formalism as follows (see, e.g., \cite{christodoulou00}). Let $A=(A_0,A_1,A_2,A_3)$ denote the electromagnetic vector potential such that, for $i\in\{1,2,3\}$, $E^i=\frac{\partial}{\partial x^i}A_0-\frac{\partial}{\partial t}A_i$ and $B^i=\sum_{k,l=1}^3\frac{\partial}{\partial x^k}A_l\epsilon^{ikl}$, where $\epsilon^{ikl}$ denotes the fully antisymmetric symbol (here, and in the following, indices are raised and lowered using the Minkowski metric $\diag(-1,1,1,1)$). The field tensor $F=dA$, a two--form, is given by $F=\sum_{\mu,\nu=0}^3\frac12F_{\mu\nu}dx^\mu\wedge dx^\nu$, with $F_{\mu\nu}=\frac{\partial}{\partial x^\mu}A_\nu - \frac{\partial}{\partial x^\nu}A_\mu$, $\mu,\nu\in\{0,1,2,3\}$, $x^0=t$. We have 
\begin{equation*}
  (F_{\mu\nu}) = 
  \begin{pmatrix}
      0  & -E_1 & -E_2 & -E_3 \\
     E_1 &   0  &  B_3 & -B_2 \\
     E_2 & -B_3 &   0  &  B_1 \\
     E_3 &  B_2 & -B_1 &   0
  \end{pmatrix}
  ,
\end{equation*}
and we obtain the two equations on the left in \eqref{eqs:Maxwell_full} from $dF=d^2A=0$. Given a (dual) Lagrangian density $\postsup{\mathcal L}{\star}=\postsup{\mathcal L}{\star}(x,\alpha,\Phi)$, where $\alpha$ and $\Phi$ denote one-- and two--forms, respectively, we define the dual canonical momentum $\postsup{\Psi}{\star}$ and the dual electromagnetic displacement $\postsup{G}{\star}$, both bivectors, by $\postsup{\Psi}{\star} = \sum_{\mu,\nu=0}^3 \frac12\postsup{\Psi}{\star}^{\mu\nu}\frac{\partial}{\partial x^\mu}\wedge\frac{\partial}{\partial x^\nu}$ and $\postsup{G}{\star} = \sum_{\mu,\nu=0}^3 \frac12\postsup{G}{\star}^{\mu\nu}\frac{\partial}{\partial x^\mu}\wedge\frac{\partial}{\partial x^\nu}$, respectively, where 
\begin{equation*}
  \postsup{\Psi}{\star}^{\mu\nu} = \frac{\partial}{\partial \Phi_{\mu\nu}}\postsup{\mathcal L}{\star} , \quad \postsup{G}{\star}^{\mu\nu}(x) = \postsup{\Psi}{\star}^{\mu\nu}\bigl(x,A(x),F(x)\bigr) , \qquad \mu,\nu\in\{0,1,2,3\},\ \mu\neq\nu .
\end{equation*}
Setting 
\begin{equation*}
  D^i = -\frac{\partial}{\partial E_i}\bigl(\postsup{\mathcal L}{\star}(\argplaceholder,A,F)\bigr) \quad\text{ and }\quad H^i = \frac{\partial}{\partial B_i}\bigl(\postsup{\mathcal L}{\star}(\argplaceholder,A,F)\bigr) ,  \qquad i\in\{1,2,3\} ,
\end{equation*}
we get 
\begin{equation*}
  (\postsup{G}{\star}^{\mu\nu}) = 
  \begin{pmatrix}
      0  &  D^1 &  D^2 &  D^3 \\
    -D^1 &   0  &  H^3 & -H^2 \\
    -D^2 & -H^3 &   0  &  H^1 \\
    -D^3 &  H^2 & -H^1 &   0
  \end{pmatrix}
  .
\end{equation*}
The canonical momentum $\Psi$ and the electromagnetic displacement $G$ are then given by $\Psi=\sum_{\mu,\nu=0}^3\frac12\Psi_{\mu\nu}dx^\mu\wedge dx^\nu$ and $G=\sum_{\mu,\nu=0}^3\frac12 G_{\mu\nu}dx^\mu\wedge dx^\nu$, respectively, with $\Psi_{\mu\nu}=\sum_{\kappa,\lambda=0}^3 \frac12\postsup{\Psi}{\star}^{\kappa\lambda}\epsilon_{\kappa\lambda\mu\nu}$ and $G_{\mu\nu}=\sum_{\kappa,\lambda=0}^3 \frac12\postsup{G}{\star}^{\kappa\lambda}\epsilon_{\kappa\lambda\mu\nu}$, where $\epsilon_{\kappa\lambda\mu\nu}$ is the totally antisymmetric symbol. We obtain 
\begin{equation*}
  (G_{\mu\nu}) = 
  \begin{pmatrix}
      0  &  H^1 &  H^2 &  H^3 \\
    -H^1 &   0  &  D^3 & -D^2 \\
    -H^2 & -D^3 &   0  &  D^1 \\
    -H^3 &  D^2 & -D^1 &   0
  \end{pmatrix}
  .
\end{equation*}
Setting 
\begin{align*}
  \rho(x) &= \Bigl(-\frac{\partial}{\partial \alpha_0}\postsup{\mathcal L}{\star}\Bigr)\bigl(x,A(x),F(x)\bigr) , \\
   J^i(x) &= \Bigl(-\frac{\partial}{\partial \alpha_i}\postsup{\mathcal L}{\star}\Bigr)\bigl(x,A(x),F(x)\bigr) , \qquad i\in\{1,2,3\} ,
\end{align*}
and defining the dual four--current $\postsup{I}{\star}$ by $\postsup{I}{\star}^0=\rho$ and $\postsup{I}{\star}^i=J^i$, $i\in\{1,2,3\}$, the current three--form $I$ is given by $I=\sum_{\lambda,\mu,\nu=0}^3\frac16 I_{\lambda\mu\nu}dx^\lambda\wedge dx^\mu\wedge dx^\nu$, where $I_{\lambda\mu\nu}=\sum_{\kappa=0}^3\postsup{I}{\star}^\kappa\epsilon_{\kappa\lambda\mu\nu}$. The equations on the right in \eqref{eqs:Maxwell_full} are then obtained from the Euler--Lagrange equations 
\begin{equation*}
  dG = I \qquad\Leftrightarrow\qquad \postsup{(dG)}{\star} = \postsup{I}{\star} ,
\end{equation*}
which arise from varying $A$ in the electromagnetic action $\mathcal S[A;\mathcal D] = \int_{\mathcal D}\mathcal L(\argplaceholder,A,F)$ of $A$ in the space--time domain $\mathcal D$ (cf.~\cite[eq.~(6.1.32)]{christodoulou00}). The current $I$ being the canonical force $\iota$ along a solution, $I=\iota(\argplaceholder,A,F)$, we write 
\begin{equation*}
  \iota = \sum_{\lambda,\mu,\nu=0}^3\frac16\iota_{\lambda\mu\nu}dx^\lambda\wedge dx^\mu\wedge dx^\nu ,\ \iota_{\lambda\mu\nu} = \sum_{\kappa=0}^3\postsup{\iota}{\star}^\kappa\epsilon_{\kappa\lambda\mu\nu} ,\ \postsup{\iota}{\star}^\kappa = -\frac{\partial}{\partial \alpha_\kappa}\postsup{\mathcal L}{\star} .
\end{equation*}
From the requirement that the action $S[A;\mathcal D]$ be invariant under gauge transformations $A\mapsto A+df$ compactly supported in the space--time domain $\mathcal D$, one shows (cf.~\cite[p.265]{christodoulou00}) that $\iota$ has to be a \emph{conserved null current}, in the sense that $dJ=0$ and that $\frac{\partial}{\partial \alpha_\nu}\postsup{\iota}{\star}^\mu+\frac{\partial}{\partial \alpha_\mu}\postsup{\iota}{\star}^\nu=0$ (see \cite[\S6.3\&Ch.3]{christodoulou00}). Theorem~6.1 in \cite{christodoulou00} then implies that $\iota$ is, in general, of the form 
\begin{equation*}
  \iota = \iota(x,\Phi) = \stackrel{(0)}{C}(x) + \Phi(x)\wedge\stackrel{(1)}{C}(x) ,
\end{equation*}
where $\stackrel{(i)}{C}$, $i\in\{0,1\}$, are closed exterior differential forms on space--time of degree $3-2i$. One concludes that the Lagrangian is of the form:
\begin{equation*}
  \mathcal L(x,\alpha,\Phi) = \stackrel{(0)}{\mathcal L}(x,\Phi) + \alpha(x)\wedge\iota(x,\Phi) .
\end{equation*}
The canonical electromagnetic stress tensor $\postsup{T}{\star}$ is defined through its components by (cf.~\cite[\S6.3]{christodoulou00}) 
\begin{equation*}
  \ul{{\postsup{T}{\star}}\mu\nu} = \sum_{\lambda=0}^3\postsup{G}{\star}^{\mu\lambda}F_{\nu\lambda} - \stackrel{{\mkern-10.0mu(0)}}{\postsup{\mathcal L}{\star}}(\argplaceholder,F)\ul{\delta\mu\nu} , \qquad \mu,\nu\in\{0,1,2,3\} .
\end{equation*}
Corresponding to a time translation field $X$, we have the energy--momentum density vectorfield 
\begin{equation*}
  \postsup{P}{\star}^\mu = -\sum_{\nu=0}^3\ul{{\postsup{T}{\star}}\mu\nu}X^\nu .
\end{equation*}
Taking $X=\frac{\partial}{\partial x^0}$, we get 
\begin{equation*}
  \postsup{P}{\star}^\mu = -\ul{{\postsup{T}{\star}}\mu0} .
\end{equation*}
Then $\postsup{P}{\star}^0=\mathcal E$ is the field energy density and $\postsup{P}{\star}^i=\mathcal P^i$ is the field energy flux (Poynting flux). From the above, we find in terms of $E$, $B$, $D$, $H$ and slightly abusing notation that, along a solution, 
\begin{equation*}
  \mathcal E(\argplaceholder,D,B) = E\cdot D + \stackrel{\mkern-10mu(0)}{\postsup{\mathcal L}{\star}}(\argplaceholder,E,B) \quad\text{ and }\quad \mathcal P(\argplaceholder,E,H) = E\times H ,
\end{equation*}
where, now, 
\begin{equation*}
  E = \frac{\partial}{\partial D}\mathcal E , \qquad H = \frac{\partial}{\partial B}\mathcal E .
\end{equation*}
If we assume $\stackrel{{\mkern-10.0mu(0)}}{\postsup{\mathcal L}{\star}}$ not to depend explicitly on $x^0=t$, the Maxwell equations imply 
\begin{equation*}
  \sum_{\mu=0}^3 \nabla_\mu{\postsup{P}{\star}}^\mu = -\sum_{\mu=0}^3F_{\mu0}\postsup{I}{\star}^\mu ,
\end{equation*}
or,
\begin{equation*}
  \frac{\partial}{\partial t}\mathcal E + \nabla\cdot\mathcal P = -E\cdot J ,
\end{equation*}
which expresses the differential energy law.

We now focus our attention on electromagnetic plane waves inside a non--dispersive homogeneous dielectric material with no magnetic properties. Dealing with a perfect insulator, there is no free current, i.e., $J\equiv0$. For simplicity, we shall also assume the absence of extraneous charges, i.e., $\rho\equiv0$, so that we have $I\equiv0$. Note also that we assume to be well below the \emph{electrical breakdown} of the material, i.e., we assume the fields to be weak enough so that the material actually behaves as an insulator. This is in accordance with our mathematical treatment, where we also showed that the smallness assumption on the initial data is preserved in evolution, even though infinite field gradients develop in finite time. %
The Lagrangian density $\postsup{\mathcal L}{\star}$ for a homogeneous material with only time--independent properties (hence our requirement of no dispersion) is invariant under both time and space translations. We can thus write, for some function $L$,
\begin{equation*}
  \postsup{\mathcal L}{\star}(t,x,A,E,B) = L(E,B) ,
\end{equation*}
and $D$, $H$ are defined by 
\begin{equation*}
  D = -\frac{\partial}{\partial E}L , \quad H = \frac{\partial}{\partial B}L ,
\end{equation*}
respectively. If the material has no magnetic properties, there is a function $\mathfrak F$ such that 
\begin{equation*}
  L = \mathfrak F(E) + \frac12\abs{B}^2 ,
\end{equation*}
whence $H=B$ and $D=-\frac{\partial\mathfrak F}{\partial E}$, i.e., $D$ depends on $E$ alone. Since the Maxwell equations \eqref{eqs:Maxwell_full} are left unchanged by addition of a constant to $L$, we may assume that $\mathfrak F(0)=0$. In addition, we shall assume that $-d\mathfrak F$ is invertible around zero. In terms of the energy density (viewed as a function of $(D,B)$) 
\begin{equation*}
  \mathcal E(D,B) = E\cdot D + L(E,B) ,
\end{equation*}
we obtain, for $i\in\{1,2,3\}$, 
\begin{align*}
  \frac{\partial}{\partial D^i}\mathcal E &= \sum_j\Bigl(\frac{\partial}{\partial D^i}E^j\Bigr)D^j + E^i + \sum_j \Bigl(\underbrace{\frac{\partial}{\partial E^j}L}_{\phantom{-D^j}=-D^j}\Bigr)\frac{\partial}{\partial D^i}E^j = E^i , \\
\intertext{and}
  \frac{\partial}{\partial B^i}\mathcal E &= \sum_j\Bigl(\frac{\partial}{\partial B^i}E^j\Bigr)D^j + \sum_j \Bigl(\underbrace{\frac{\partial}{\partial E^j}L}_{\phantom{-D^j}=-D^j}\Bigr)\frac{\partial}{\partial B^i}E^j + \underbrace{\frac{\partial}{\partial B^i}L}_{\phantom{H^i}=H^i} = H^i ,
\end{align*}
as expected. Since $D$ depends only on $E$, we can write 
\begin{equation*}
  \mathcal E(D,B) = \mathfrak G(D) + \frac12\abs{B}^2 ,
\end{equation*}
where 
\begin{equation*}
  \mathfrak G(D) = E\cdot D + \mathfrak F(E) .
\end{equation*}
Note that $\mathfrak F(0)=0$ entails $\mathfrak G(0)=0$.

The aim of what follows is to study plane wave solutions of the Maxwell equations for $\mathfrak G$ given by its third order Taylor expansion around zero. For emphasis, we shall henceforth write $x=x^1$, $y=x^2$, $z=x^3$, and, correspondingly,
\begin{equation*}
  \begin{split}
  D_x = D^1,\ D_y = D^2,\ D_z = D^3,\ E_x = E^1,\ E_y = E^2,\ E_z = E^3, \qquad\qquad \\
  \qquad B_x = B^1,\ B_y = B^2,\ B_z = B^3 .
  \end{split}
\end{equation*}
An electromagnetic plane wave travelling in the $x$--direction is characterised by the fact that the field variables depend only on $x$ and $t$. From the constraint equations $\div B=0$ and $\div D=0$, we obtain that $D_x$ and $B_x$ are constant in space. Their constancy in time is a consequence of the two other equations in \eqref{eqs:Maxwell_full}. We may therefore assume that $D_x$ and $B_x$ vanish throughout. The remaining equations governing the evolution of $(D_y,D_z,B_y,B_z)$ reduce to 
\begin{equation} \label{eqs:Maxwell}
  \left\{ \begin{aligned}
    \frac{\partial}{\partial t} D_y + \frac{\partial}{\partial x} B_z &=0 , & \frac{\partial}{\partial t} D_z - \frac{\partial}{\partial x} B_y &=0 , \\
    \frac{\partial}{\partial t} B_y - \frac{\partial}{\partial x} E_z &=0 , & \frac{\partial}{\partial t} B_z + \frac{\partial}{\partial x} E_y &=0 ,
  \end{aligned} \right.
\end{equation}
where $E=\frac{\partial\mathcal E}{\partial D}=\frac{\partial\mathfrak G}{\partial D}$. In the type of material under consideration, modulo the correct alignment of the rectangular coordinate axes, we can assume that, up to terms of fourth order, 
\begin{equation*}
  \mathfrak G(D) = \frac12 K_1 D_y^2 + \frac12 K_2 D_z^2 + C_{111} D_y^3 + C_{112} D_y^2D_z + C_{122} D_yD_z^2 + C_{222} D_z^3 ,
\end{equation*}
where $K_1, K_2, C_{111}, C_{112}, C_{122}, C_{222} \in\rz$ are constants (note that there is no need to consider terms linear in the components of $D$, as they would not alter the Maxwell equations \eqref{eqs:Maxwell}). In order to obtain a strictly hyperbolic system, we restrict ourselves to birefringent materials, i.e., media where the refractive index depends on the polarisation of the light. This situation occurs in certain crystals and translates into the condition 
\begin{equation} \label{eq:K1K2}
  0 < K_2 < K_1 < 1 .
\end{equation}
The energy density is 
\begin{equation} \label{eq:energy_third_order}
  \mathcal E = \frac12 B_y^2 + \frac12 B_z^2 + \frac12 K_1 D_y^2 + \frac12 K_2 D_z^2 + C_{111} D_y^3 + C_{112} D_y^2D_z + C_{122} D_yD_z^2 + C_{222} D_z^3 .
\end{equation}
We then obtain from $E=\frac{\partial\mathcal E}{\partial D}=\frac{\partial\mathfrak G}{\partial D}$, 
\begin{equation} \label{eqs:E_in_terms_of_D}
  \left\{ \begin{aligned}
    E_y &= K_1 D_y + 3 C_{111} D_y^2 + 2 C_{112} D_yD_z +   C_{122} D_z^2 , \\
    E_z &= K_2 D_z +   C_{112} D_y^2 + 2 C_{122} D_yD_z + 3 C_{222} D_z^2 .
  \end{aligned} \right.
\end{equation}
We now show that \eqref{eqs:Maxwell}, taking into account \eqref{eqs:E_in_terms_of_D}, falls in the framework of our theory. More precisely, we prove 
\begin{lemma} \label{lem:theory_applies}
  Let $u=\bigl(D_y,D_z,B_y,B_z\bigr)$. Then there is a matrix function $a=a(u)$ such that \eqref{eqs:Maxwell} for $E$ given by \eqref{eqs:E_in_terms_of_D} is equivalent to 
  \begin{equation} \label{eq:theory_applies_main_pde}
    \frac{\partial}{\partial t}u + a(u)\frac{\partial}{\partial x}u = 0 .
  \end{equation}
  Moreover, under the assumption \eqref{eq:K1K2}, there is a $\delta>0$ such that the system \eqref{eq:theory_applies_main_pde} is uniformly strictly hyperbolic on $\nball[4]_{\delta}(0)$. Finally, \eqref{eq:theory_applies_main_pde} is genuinely non--linear, whenever $C_{111}$ and $C_{222}$ are non--zero.
\end{lemma}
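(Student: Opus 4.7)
The plan is to verify the three assertions in order. First I would apply the chain rule to the last two equations in \eqref{eqs:Maxwell}, using \eqref{eqs:E_in_terms_of_D} to express $\partial_xE_y,\partial_xE_z$ as linear combinations of $\partial_xD_y,\partial_xD_z$. Denoting the Hessian of $\mathfrak G$ by
\begin{equation*}
  H(u) = \begin{pmatrix} E_{yy}(u) & E_{yz}(u) \\ E_{zy}(u) & E_{zz}(u) \end{pmatrix} , \qquad E_{\alpha\beta}(u)=\frac{\partial E_\alpha}{\partial D_\beta}(u) ,
\end{equation*}
which is symmetric by equality of mixed partials of $\mathfrak G$ and depends only on the $D$--components of $u=(D_y,D_z,B_y,B_z)$, one sees that the system \eqref{eqs:Maxwell} takes the form \eqref{eq:theory_applies_main_pde} with
\begin{equation*}
  a(u) = \begin{pmatrix} 0 & 0 & 0 & 1 \\ 0 & 0 & -1 & 0 \\ -E_{zy}(u) & -E_{zz}(u) & 0 & 0 \\ E_{yy}(u) & E_{yz}(u) & 0 & 0 \end{pmatrix} ,
\end{equation*}
which settles the first claim.

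For strict hyperbolicity I would exploit the block--antidiagonal structure of $a(u)$ to expand its characteristic polynomial, obtaining (first for $\lambda\neq0$, then extending by continuity)
\begin{equation*}
  \det\bigl(a(u)-\lambda\id\bigr) = \lambda^4 - \tr H(u)\,\lambda^2 + \det H(u) .
\end{equation*}
Consequently the spectrum of $a(u)$ is $\{\pm\sqrt{\mu_1(u)},\pm\sqrt{\mu_2(u)}\}$, where $\mu_1(u)\geq\mu_2(u)$ denote the eigenvalues of the symmetric matrix $H(u)$. Since $H(0)=\diag(K_1,K_2)$, the hypothesis \eqref{eq:K1K2} makes $\mu_1(0)=K_1$ and $\mu_2(0)=K_2$ positive and distinct, so the four eigenvalues of $a(0)$ satisfy
\begin{equation*}
  \lambda_1(0)=\sqrt{K_1}>\lambda_2(0)=\sqrt{K_2}>\lambda_3(0)=-\sqrt{K_2}>\lambda_4(0)=-\sqrt{K_1} .
\end{equation*}
Continuous dependence of the roots of a polynomial on its coefficients then yields uniform strict hyperbolicity on some $\nball[4]_{2\delta}(0)$.

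For genuine non--linearity I would compute the eigenvectors of $a(0)$ explicitly and apply first--order perturbation theory. A short direct verification gives, up to the normalisation \eqref{eq:ei_normalisation_condition},
\begin{align*}
  e_1(0)&\propto(1,0,0,\sqrt{K_1}) , & e_4(0)&\propto(1,0,0,-\sqrt{K_1}) , \\
  e_2(0)&\propto(0,1,-\sqrt{K_2},0) , & e_3(0)&\propto(0,1,\sqrt{K_2},0) .
\end{align*}
The crucial feature is that the $D$--part of $e_1(0),e_4(0)$ is parallel to the Hessian eigenvector $r_1=(1,0)^\top$ associated with $K_1$, and that of $e_2(0),e_3(0)$ parallel to $r_2=(0,1)^\top$ associated with $K_2$. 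Because $H$ is independent of the $B$--components, the Hellmann--Feynman formula for simple symmetric eigenvalues gives
\begin{equation*}
  D_{e_i}\lambda_i(0) = \pm\frac{1}{2\sqrt{\mu_j(0)}}\,r_j^\top\bigl(D_{e_i^D}H(0)\bigr)r_j ,
\end{equation*}
where $e_i^D$ denotes the $D$--part of $e_i(0)$, with $j=1$ for $i\in\{1,4\}$ and $j=2$ for $i\in\{2,3\}$. Differentiating \eqref{eqs:E_in_terms_of_D} one reads off $\partial_{D_y}E_{yy}(0)=6C_{111}$ and $\partial_{D_z}E_{zz}(0)=6C_{222}$, so that $D_{e_i}\lambda_i(0)$ is proportional to $C_{111}$ for $i\in\{1,4\}$ and to $C_{222}$ for $i\in\{2,3\}$. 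Hence $\cull{ciii}(0)=D_{e_i}\lambda_i(0)\neq0$ for every $i$ precisely when both $C_{111}$ and $C_{222}$ are non--zero, as claimed.

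The main subtlety lies in this last step: the off--diagonal cubic coefficients $C_{112},C_{122}$ could a priori contribute to $D_{e_i}\lambda_i(0)$, but the alignment of the coordinate axes with the crystal's optical axes built into \eqref{eq:energy_third_order} diagonalises $H(0)$ and thereby decouples the first--order variations of $\mu_1$ and $\mu_2$ from those cross--coefficients, reducing genuine non--linearity to the single requirement that $C_{111}$ and $C_{222}$ not vanish.
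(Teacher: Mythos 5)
Your proposal is correct, and for the first two assertions it follows essentially the paper's own route: your matrix $a(u)$ is exactly the paper's \eqref{eq:a} (with $E_{yy}=d_1$, $E_{zz}=d_2$, $E_{yz}=E_{zy}=c$ in the notation \eqref{eqs:d1d2c}), and your characteristic polynomial $\lambda^4-\tr H\,\lambda^2+\det H$ is the paper's $(\lambda^2)^2-2m\lambda^2+(m^2-R^2)$, since $m=\tfrac12\tr H$ and $m\pm R$ are precisely the eigenvalues of the symmetric Hessian $H$. The only difference there is that the paper replaces your appeal to continuous dependence of the roots by an explicit quantitative argument (the parameter $h$ in \eqref{eq:h}--\eqref{eqs:lambda_range}) producing the uniform chain $-1<-\sqrt{m+R}<-\sqrt{m_0}<-\sqrt{m-R}<0<\dots<1$ on $\nball[4]_{2\delta}(0)$; your softer argument suffices for the lemma as stated.

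For genuine non--linearity your route is genuinely different and noticeably shorter. The paper constructs the full eigenvector/dual--eigenvector frame $e_\lambda$, $\postsup{e}{\star}^\lambda$ for $c\neq0$ (eqs.~\eqref{eqs:estaretilde}--\eqref{eqs:estare}), computes every coefficient $\cull{c\lambda{\lambda'}{\lambda''}}$ in \eqref{eq:cllplpp}, and then passes to the limit $c\to0$ to read off $\cull{c\lambda\lambda\lambda}(0)=\pm 3C_{111}/\sqrt{K_1}$ resp.\ $\pm 3C_{222}/\sqrt{K_2}$. You instead observe that the spectrum of $a$ is $\{\pm\sqrt{\mu_j(H)}\}$ with $H$ depending only on $(D_y,D_z)$, and apply first--order perturbation theory (Hellmann--Feynman) to the $2\times2$ symmetric matrix $H(0)=\diag(K_1,K_2)$: since the $D$--parts of $e_1(0),e_4(0)$ and $e_2(0),e_3(0)$ are the Hessian eigenvectors $(1,0)$ and $(0,1)$, only $\partial_{D_y}E_{yy}(0)=6C_{111}$ and $\partial_{D_z}E_{zz}(0)=6C_{222}$ survive, and the resulting values $D_{e_i}\lambda_i(0)=\pm3C_{111}/\sqrt{K_1}$, $\pm3C_{222}/\sqrt{K_2}$ agree with the paper's \eqref{eqs:absclll_at_zero_minmax}. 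What your approach buys is a clean, coordinate--light proof of the lemma that sidesteps the delicate $c\to0$ continuity check for the eigenvectors; what the paper's longer computation buys is the explicit frame and the full set of coefficients $\cull{c\lambda{\lambda'}{\lambda''}}$, which are needed later (for the Riemann--invariant discussion and the boundary--value analysis), not just the diagonal ones at $u=0$.
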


\subsection{Proof of Lemma~\ref{lem:theory_applies}} \label{subsec:theory_applies}

Let $u = \bigl( D_y, D_z, B_y, B_z \bigr)^T$. Then the first row of \eqref{eqs:Maxwell} is 
\begin{equation*}
  \frac{\partial}{\partial t} u^1 + \frac{\partial}{\partial x} u^4 = 0 , \qquad \frac{\partial}{\partial t} u^2 - \frac{\partial}{\partial x} u^3 = 0 .
\end{equation*}
Using \eqref{eqs:E_in_terms_of_D}, the second row of \eqref{eqs:Maxwell} translates to
\begin{align*}
  \frac{\partial}{\partial t} u^3 - K_2\frac{\partial}{\partial x} u^2 - 2C_{112}u^1\frac{\partial}{\partial x} u^1 - 2C_{122}u^1\frac{\partial}{\partial x} u^2 - 2C_{122}u^2\frac{\partial}{\partial x} u^1 - 6C_{222}u^2\frac{\partial}{\partial x} u^2 &= 0 , \\
\intertext{and}
  \frac{\partial}{\partial t} u^4 + K_1\frac{\partial}{\partial x} u^1 + 6C_{111}u^1\frac{\partial}{\partial x} u^1 + 2C_{112}u^1\frac{\partial}{\partial x} u^2 + 2C_{112}u^2\frac{\partial}{\partial x} u^1 + 2C_{122}u^2\frac{\partial}{\partial x} u^2 &= 0 .
\end{align*}
Thus, setting 
\begin{subequations} \label{eqs:d1d2c}
\begin{align}
\label{eq:d1}
  d_1 = d_1(u) &= K_1 + 6C_{111}u^1 + 2C_{112}u^2 , \\
\label{eq:d2}
  d_2 = d_2(u) &= K_2 + 2C_{122}u^1 + 6C_{222}u^2 , \\
\intertext{and}
\label{eq:c}
  \phantom{d_2}\makebox[0pt][r]{c} = \phantom{d_2}\makebox[0pt][r]{c}(u) &= 2C_{112}u^1 + 2C_{122}u^2 ,
\end{align}
\end{subequations}
we see that the Maxwell equations \eqref{eqs:Maxwell} are equivalent to 
\begin{equation*}
  \frac{\partial}{\partial t} u + a \frac{\partial}{\partial x} u = 0 ,
\end{equation*}
with 
\begin{equation} \label{eq:a}
  a = a(u) = \begin{pmatrix}
         0   &    0    &  0 &  1 \\
         0   &    0    & -1 &  0 \\
      -c(u)  & -d_2(u) &  0 &  0 \\
      d_1(u) &   c(u)  &  0 &  0 
    \end{pmatrix} .
\end{equation}
We have thus proved the first statement in Lemma~\ref{lem:theory_applies}.

Now set 
\begin{equation} \label{eqs:mr}
  m = m(u) = \frac{d_1+d_2}2 \qquad\text{ and }\qquad r = r(u) = \frac{d_1-d_2}2 ,
\end{equation}
so that $m+r=d_1$ and $m-r=d_2$. Notice that 
\begin{equation} \label{eqs:d1d2c_at_zero}
  d_1(0) = K_1 , \qquad d_2(0) = K_2 \qquad\text{ and }\qquad c(0)=0 ,
\end{equation}
whence, setting $m_0=m(0)$, $r_0=r(0)$, 
\begin{equation} \label{eqs:m0r0}
  m_0 = \frac{K_1+K_2}2 \qquad\text{ and }\qquad r_0 = \frac{K_1-K_2}2 .
\end{equation}
Pick then any $h>0$ such that 
\begin{equation} \label{eq:h}
  h < \frac{\min\Bigl\{r_0\,,\,\overbrace{m_0-r_0}^{K_2}\,,\,1-\bigl(\overbrace{m_0+r_0}^{K_1}\bigr)\Bigr\}}{2m_0} ,
\end{equation}
so that 
\begin{align*}
  (1-h) &> \max\Biggl\{ 1 - \frac{r_0}{2m_0} \;,\; \frac{K_1}{2m_0} \Biggr\} , \\
  (1+h) &< \min\Biggl\{ 1 + \frac{r_0}{2m_0} \;,\; \frac{1+K_2}{2m_0} \Bigl\} .
\end{align*}
Then it is easy to check that, by \eqref{eq:K1K2}, 
\begin{equation*}
  0 < (1-h)K_2 < (1+h)K_2 < (1-h)K_1 < (1+h)K_1 < 1 ,
\end{equation*}
for 
\begin{align*}
  (1-h)K_2 &> \frac{K_1K_2}{K_1+K_2} > 0 , \\
  (1+h)K_1 &< \frac{K_1(1+K_2)}{K_1+K_2} = 1 - \frac{K_2(1-K_1)}{K_1+K_2} < 1 , \\
  (1-h)K_1-(1+h)K_2 &> K_1-K_2 - \frac{K_1r_0+K_2r_0}{2m_0} = 2r_0 - r_0 = r_0 > 0 .
\end{align*}
Let now $\delta>0$ be so small that 
\begin{equation} \label{eq:d1d2c_range}
  \begin{split}
    \bigabs{K_1-d_1(v)} < hK_1 , \quad&\quad \bigabs{K_2-d_2(v)} < hK_2 , \\
    &\makebox[0pt][c]{\text{ and }} \\
    c(v)^2 < \min\Bigl\{ K_1\bigl(K_2-h(K_1+K_2)\bigr) \,&,\, (1-K_2)\bigl(1-K_1-h(K_1+K_2)\bigr) \Bigl\} ,
  \end{split}
\end{equation}
for all $v\in\nball[4]_{2\delta}(0)=\bigl\{v'\in\rz[4]\bigm|\lrabs{v'}<2\delta\bigr\}$, and let $u\in\nball[4]_{2\delta}(0)$. Then, by \eqref{eqs:mr}, \eqref{eqs:m0r0}, 
\begin{align*}
  m &= \frac{d_1+d_2}2 = \frac{K_1+K_2}2 + \frac{d_1-K_1}2 + \frac{d_2-K_2}2 \\
    &\in \Bigl( \frac{K_1+K_2}2 - \frac{hK_1}2 - \frac{hK_2}2, \frac{K_1+K_2}2 + \frac{hK_1}2 + \frac{hK_2}2 \Bigr) = \bigl( (1-h)m_0, (1+h)m_0 \bigr) ,
\end{align*}
so that 
\begin{equation*}
  m > (1-h)m_0 \quad\text{ and }\quad 1-m > 1-(1+h)m_0 .
\end{equation*}
Since \eqref{eq:h} implies $h<\frac{r_0}{2m_0}$, we have by \eqref{eq:K1K2}, 
\begin{align*}
  (1-h)m_0 &> m_0 - \frac{r_0}2 = \frac{K_1+K_2}2 - \frac{K_1-K_2}4 = \frac14K_1+\frac34K_2 > 0 \\
\intertext{and}
  (1+h)m_0 &< m_0 + \frac{r_0}2 = \frac{K_1+K_2}2 + \frac{K_1-K_2}4 = \frac34K_1+\frac14K_2 < 1 ,
\end{align*}
i.e.,
\begin{equation*}
  (1-h)m_0 > 0 \quad\text{ and }\quad 1-(1+h)m_0 > 0 .
\end{equation*}
As a result, 
\begin{equation*}
  0 < m < 1 .
\end{equation*}
Define 
\begin{equation} \label{eq:R}
  R = R(u) = \sqrt{r^2+c^2} ,
\end{equation}
and observe that, by \eqref{eqs:mr}, \eqref{eq:d1d2c_range}, \eqref{eqs:m0r0},
\begin{align*}
  R^2 = r^2 + c^2 &< \Bigl( \frac{K_1-K_2}2 + h\frac{K_1+K_2}2 \Bigr)^2 + K_1\bigl(K_2-h(K_1+K_2)\bigr) \\
    &= r_0^2 + 2hr_0m_0 + h^2m_0^2 + (m_0+r_0)(m_0-r_0-2hm_0) \\
    &= \bigl((1-h)m_0\bigr)^2 < m^2 .
\end{align*}
Similarly, \eqref{eqs:mr}, \eqref{eq:d1d2c_range}, \eqref{eqs:m0r0} yield 
\begin{align*}
  R^2 = r^2 + c^2 &< \Bigl( \frac{K_1-K_2}2 + h\frac{K_1+K_2}2 \Bigr)^2 + (1-K_2)\bigl(1-K_1-h(K_1+K_2)\bigr) \\
    &= r_0^2 + 2hr_0m_0 + h^2m_0^2 + (1-m_0+r_0)(1-m_0-r_0-2hm_0) \\
    &= \bigl(1-(1+h)m_0\bigr)^2 < (1-m)^2 .
\end{align*}
Finally, \eqref{eqs:mr}, \eqref{eq:d1d2c_range}, \eqref{eqs:m0r0} tell us 
\begin{equation*}
  R = \sqrt{r^2+c^2} > r > \frac{K_1-K_2}2 - h\frac{K_1+K_2}2 = r_0 - h m_0 , 
\end{equation*}
whence, by \eqref{eq:h}, 
\begin{align*}
  m - R &< (1+h)\frac{K_1+K_2}2 - R < m_0-r_0 + 2hm_0 < m_0  \\
\intertext{and}
  m + R &> (1-h)\frac{K_1+K_2}2 + R > m_0+r_0 - 2hm_0 > m_0 .
\end{align*}
We have thus showed that 
\begin{equation} \label{eq:mpmR_range}
  0 < m - R < m_0 < m + R < 1 .
\end{equation}
We are then ready to compute the eigenvalues of $a$. Writing (cf.~\eqref{eq:a}, \eqref{eqs:mr}) 
\begin{equation*}
  a = \begin{pmatrix}
    0&0&0&1 \\ 0&0&-1&0 \\ -c&-m+r&0&0 \\ m+r&c&0&0 
  \end{pmatrix} ,
\end{equation*}
we calculate 
\begin{align*}
  \det(\lambda\id-a) %
    &= \lambda^4 - \lambda^2(m-r) - \lambda^2(m+r) + (m^2-r^2-c^2) \\
    &= (\lambda^2)^2 - 2m\lambda^2 + (m^2-R^2) \\
    &= (\lambda^2)^2 - \bigl((m+R)+(m-R)\bigr)\lambda^2 + (m+R)(m-R) \\
    &= \bigl( \lambda^2-(m+R) \bigr)\bigl( \lambda^2-(m-R) \bigr) ,
\end{align*}
and obtain the eigenvalues $\lambda$ of $a$: 
\begin{equation} \label{eq:lambda}
  \lambda = \lambda(u) = \pm\sqrt{m\pm R} .
\end{equation}
Notice that these are, by \eqref{eq:mpmR_range}, real and of modulus strictly smaller than one. Moreover, the four values are distinct, uniformly on $\nball[4]_\delta(0)$. Indeed, by \eqref{eq:mpmR_range}, we have on $\nball[4]_{2\delta}(0)$, 
\begin{equation} \label{eqs:lambda_range}
  -1 < -\sqrt{m+R} < -\sqrt{m_0} < -\sqrt{m-R} < 0 < \sqrt{m-R} < \sqrt{m_0} < \sqrt{m+R} < 1 ,
\end{equation}
which, therefore, holds uniformly on $\nball[4]_\delta(0)$. As a result, the second statement in Lemma~\ref{lem:theory_applies} is proved. For the rest of this subsection, we shall assume that $u\in\nball[4]_\delta(0)$. Notice that, for $u=0$, we obtain by \eqref{eqs:mr}, \eqref{eq:R}, \eqref{eqs:d1d2c_at_zero},
\begin{equation} \label{eq:lambda_at_zero}
  \lambda(0) = \left\{ \begin{aligned}
    &\pm\sqrt{K_1} \\
    &\pm\sqrt{K_2}
  \end{aligned} \right. \qquad.
\end{equation}
Also, it is easy to see that (see \eqref{eq:a} and \eqref{eqs:d1d2c_at_zero}) 
\begin{equation} \label{eq:ei_at_zero}
  \postsub{e}{0}_1 = \begin{pmatrix} 1 \\ 0 \\ 0 \\ \sqrt{K_1} \end{pmatrix} ,\  \postsub{e}{0}_2 = \begin{pmatrix} 0 \\ 1 \\ -\sqrt{K_2} \\ 0 \end{pmatrix} ,\  \postsub{e}{0}_3 = \begin{pmatrix} 0 \\ 1 \\ \sqrt{K_2} \\ 0 \end{pmatrix} \quad\text{ and }\quad \postsub{e}{0}_4 = \begin{pmatrix} 1 \\ 0 \\ 0 \\ -\sqrt{K_1} \end{pmatrix} 
\end{equation}
are eigenvectors of $a(0)$ corresponding to the eigenvalues $\lambda_1(0)=\sqrt{K_1}$, $\lambda_2(0)=\sqrt{K_2}$, $\lambda_3(0)=-\sqrt{K_2}$ and $\lambda_4(0)=-\sqrt{K_1}$, respectively. Now note that the basis $\{\postsub{e}{0}_i\}_{i\in\{1,\ldots,4\}}$ is orthonormal with respect to the scalar product 
\begin{equation} \label{eq:at_zero_scprod}
  \scprod{v,w}_0 = \sum_{i,j} g_{ij}v^iw^j , \qquad v,w\in\rz[4] ,
\end{equation}
where $g_{ij}$, $i,j\in\{1,\ldots,4\}$, are the components of the matrix 
\begin{equation} \label{eq:at_zero_scprod_matrix}
  g = \diag\Bigl( \frac12, \frac12, \frac1{2K_2}, \frac1{2K_1} \Bigr) .
\end{equation}
We will use the norms $\norm{\argplaceholder}_0$ and $\norm{\argplaceholder}_{0^\star}$ induced by this scalar product on $\rz[4]$ and $\postsup{\rz[4]}{\star}$, respectively, to normalise the eigenvectors of $a(u)$ and measure their duals (cf.~Subsection~\ref{subsec:cauchyproblem}).

For $c\neq0$, let 
\begin{equation} \label{eqs:mumuhat}
  \mu = \frac{\lambda^2-d_2}{c} = \frac1c\bigl(r\pm R\bigr) \qquad\text{ and }\qquad \widehat\mu = -\frac{\lambda^2-d_1}{c} = \frac1c\bigl(r\mp R\bigr) .
\end{equation}
We have 
\begin{equation} \label{eqs:mumuhat_prod_sum}
  \mu\widehat\mu = \frac1{c^2}( r^2-R^2 ) = -1 \qquad\text{ and }\qquad \mu+\widehat\mu = \frac{2r}c .
\end{equation}
One easily checks that 
\begin{subequations} \label{eqs:estaretilde}
  \begin{equation} \label{eq:etilde}
    \widetilde e_\lambda = \sigma_\mu\begin{pmatrix} \mu \\ 1 \\ -\lambda \\ \lambda\mu \end{pmatrix} ,
  \end{equation}
  where 
  \begin{equation} \label{eq:sigmamu}
    \sigma_\mu = \sign(\mu)\delta_{\lambda,\pm\sqrt{m+R}} + \delta_{\lambda,\pm\sqrt{m-R}} \qquad \in\{-1,1\} , 
  \end{equation}
  is an eigenvector of $a$ for the eigenvalue $\lambda$ with dual
  \begin{equation} \label{eq:estartilde}
    \postsup{\widetilde e}{\star}^\lambda = \frac{\sigma_\mu}{2\lambda(1+\mu^2)}\bigl( \lambda\mu, \lambda, -1, \mu \bigr) .
  \end{equation}
\end{subequations}
Having 
\begin{align*}
  \norm{\widetilde e_\lambda}_0^2 = \scprod{\widetilde e_\lambda,\widetilde e_\lambda}_0 &= \frac12\Bigl( 1 + \mu^2 + \frac{\lambda^2}{K_2} + \frac{\lambda^2\mu^2}{K_1} \Bigr) \\
\intertext{and}
  \norm{\postsup{\widetilde e}{\star}^\lambda}_{0^\star}^2 = \scprod{\postsup{\widetilde e}{\star}^\lambda,\postsup{\widetilde e}{\star}^\lambda}_{0^\star} &= \frac1{8\lambda^2(1+\mu^2)^2}\Bigl( (2\lambda\mu)^2 + (2\lambda)^2 + \frac{(-2K_2)^2}{K_2} + \frac{(2K_1\mu)^2}{K_1} \Bigr) \\
    &= \frac1{2\lambda^2(1+\mu^2)^2}\bigl( \lambda^2 + \lambda^2\mu^2 + K_2 + K_1\mu^2 \bigr) ,
\end{align*}
we normalise 
\begin{equation} \label{eqs:estare}
  \begin{split}
    e_\lambda &= \frac{\sigma_\mu}{\sqrt{\frac{1+\mu^2}2 + \frac{\lambda^2}2\bigl(\frac1{K_2}+\frac{\mu^2}{K_1}\bigr)}}\begin{pmatrix} \mu\\1\\-\lambda\\\lambda\mu \end{pmatrix} , \\
    \postsup{e}{\star}^\lambda &= \frac{\sigma_\mu\sqrt{\frac{1+\mu^2}2 + \frac{\lambda^2}2\bigl(\frac1{K_2}+\frac{\mu^2}{K_1}\bigr)}}{2\lambda(1+\mu^2)}\bigl(\lambda\mu, \lambda, -1, \mu \bigr) ,
  \end{split}
\end{equation}
so that 
\begin{multline} \label{eqs:estare_normalising_conditions}
  \norm{e_\lambda}_0 = 1 , \qquad \postsup{e}{\star}^\lambda e_{\lambda'} = \ul{\delta\lambda{\lambda'}} \\
    \text{ and }\qquad \norm{\postsup{e}{\star}^\lambda}_{0^\star} = \frac{\sqrt{\Bigl(1 + \mu^2 + \lambda^2\bigl(\frac1{K_2} + \frac{\mu^2}{K_1}\bigr)\Bigr)\bigl(\lambda^2 (1+\mu^2) + K_2 + K_1\mu^2\bigr)}}{2\abs{\lambda}(1+\mu^2)} ,
\end{multline}
for 
\begin{equation*}
  \postsup{\widetilde e}{\star}^\lambda\widetilde e_{\lambda'} = \frac{\sigma_\mu\sigma_{\mu'}}{2\lambda(1+\mu^2)}\bigl(\lambda\mu\mu'+\lambda+\lambda'+\lambda'\mu\mu'\bigr) = \frac{\sigma_\mu\sigma_{\mu'}(\lambda+\lambda')(1+\mu\mu')}{2\lambda(1+\mu^2)} 
\end{equation*}
and, in case $\lambda\neq\lambda'$, we have either $\mu=\mu'$, whence $\lambda=-\lambda'$, or $\mu=\widehat{\mu'}$, whence $\mu\mu'=-1$ by \eqref{eqs:mumuhat_prod_sum}.

On the other hand, for $c=0$ (which occurs, e.g., when $u=0$, or when $C_{112}=C_{122}=0$; cf.~\eqref{eq:c}), $a$ becomes 
\begin{equation*}
  a = \begin{pmatrix}
    0&0&0&1 \\ 0&0&-1&0 \\ 0&-d_2&0&0 \\ d_1&0&0&0 
  \end{pmatrix} ,
\end{equation*}
and, since $R=r$ in this case, we obtain 
\begin{equation*}
  \lambda = \left\{ \begin{aligned} &\pm\sqrt{d_1} \\ &\pm\sqrt{d_2} \end{aligned} \right. \qquad.
\end{equation*}
Now the problem decouples into a pair of two--dimensional equations for $(u^1,u^4)$ and $(u^2,u^3)$, respectively (i.e., for $(D_y,B_z)$ and $(D_z,B_y)$, respectively). The (normalised) eigenvectors of $a$ are easily seen to be given by
\begin{align*}
  e_{\pm\sqrt{d_1}} &= \frac1{\sqrt{\frac12+\frac{d_1}{2K_1}}}\begin{pmatrix} 1\\0\\0\\\pm\sqrt{d_1} \end{pmatrix} , & \postsup{e}{\star}^{\pm\sqrt{d_1}} &= \frac{\sqrt{\frac12+\frac{d_1}{2K_1}}}{\pm2\sqrt{d_1}}\bigl(\pm\sqrt{d_1},0,0,1\bigr) , \\
  e_{\pm\sqrt{d_2}} &= \frac1{\sqrt{\frac12+\frac{d_2}{2K_2}}}\begin{pmatrix} 0\\1\\\mp\sqrt{d_2}\\0 \end{pmatrix} , & \postsup{e}{\star}^{\pm\sqrt{d_2}} &= \frac{\sqrt{\frac12+\frac{d_2}{2K_2}}}{\pm2\sqrt{d_2}}\bigl(0,\pm\sqrt{d_2},-1,0\bigr) .
\end{align*}

We show that our eigenvectors \eqref{eqs:estare} for $c\neq0$ can be extended by continuity to $c=0$ and coincide there with the appropriate eigenvector from above. Let 
\begin{equation} \label{eq:mupm}
  \mu_{\pm} = \frac1c\bigl(r\pm R\bigr) = \frac1c\Bigl(r\pm\sqrt{r^2+c^2}\Bigr) .
\end{equation}
For $0<\lrabs{c}\ll r$, we have 
\begin{equation*}
  R = r + \frac{c^2}{2r} + \mathcal O\Bigl( \frac{\lrabs{c}^4}{r^3} \Bigr) .
\end{equation*}
Thus 
\begin{equation*}
  \mu_\pm = \frac{r\pm r}c \pm \frac{c}{2r} + \mathcal O\biggl(\Bigl(\frac{\lrabs{c}}r\Bigr)^3\biggr) ,
\end{equation*}
so that 
\begin{equation} \label{eqs:mupm_leadingorder}
  \mu_+ = \frac{2r}{c} + \mathcal O\Bigl(\frac{\lrabs{c}}r\Bigr) \qquad\text{ and }\qquad \mu_- = -\frac{c}{2r} + \mathcal O\biggl(\Bigl(\frac{\lrabs{c}}r\Bigr)^3\biggr) ,
\end{equation}
and $\sign(\mu_\pm)=\sign(c)$ for $\abs{c}$ small enough. A short computation then gives 
\begin{align*}
  \frac{\mu_+}{\sqrt{1+\mu_+^2}} &= \sign(\mu_+) + \mathcal O\Bigl(\frac{\lrabs{c}}r\Bigr) , & \frac1{\sqrt{1+\mu_+^2}} &= \mathcal O\Bigl(\frac{\lrabs{c}}r\Bigr) , \\
  \frac{\mu_-}{\sqrt{1+\mu_-^2}} &= \mathcal O\Bigl(\frac{\lrabs{c}}r\Bigr) , & \frac1{\sqrt{1+\mu_-^2}} &= 1 + \mathcal O \Bigl( \frac{\lrabs{c}^2}{r^2} \Bigr) .
\end{align*}
As a consequence,
\begin{align*}
  e_{\pm\sqrt{m+R}} &\stackrel{c\to0}\longrightarrow e_{\pm\sqrt{d_1}} , & \postsup{e}{\star}^{\pm\sqrt{m+R}} &\stackrel{c\to0}\longrightarrow \postsup{e}{\star}^{\pm\sqrt{d_1}} , \\
  e_{\pm\sqrt{m-R}} &\stackrel{c\to0}\longrightarrow e_{\pm\sqrt{d_2}} , & \postsup{e}{\star}^{\pm\sqrt{m-R}} &\stackrel{c\to0}\longrightarrow \postsup{e}{\star}^{\pm\sqrt{d_2}} ,
\end{align*}
which proves the claim.

Let us now compute the ``gradient'' of the matrix $a$. Let $\dot u = \bigl( \dot u^1, \dot u^2, \dot u^3, \dot u^4 \bigr)^T$ be a variation of $u$. Since the coefficients of $a$ are affine functions of the components of $u$, we obtain simply 
\begin{equation} \label{eq:a_variation}
  \frac1s \bigl( a(u+s\dot u)-a(u) \bigr) = \begin{pmatrix}
         0    &     0     &  0 &  0 \\
         0    &     0     &  0 &  0 \\
     -\dot c  & -\dot d_2 &  0 &  0 \\
     \dot d_1 &   \dot c  &  0 &  0 
    \end{pmatrix} , \qquad \forall s\in\rz ,
\end{equation}
where 
\begin{subequations} \label{eqs:d1d2c_dotted}
\begin{align}
\label{eq:d1dot}
  \dot d_1 &= 6 C_{111} \dot u^1 + 2 C_{112} \dot u^2 = 2 \bigl( 3 C_{111} \dot u^1 +   C_{112} \dot u^2 \bigr) , \\
\label{eq:d2dot}
  \dot d_2 &= 2 C_{122} \dot u^1 + 6 C_{222} \dot u^2 = 2 \bigl(   C_{122} \dot u^1 + 3 C_{222} \dot u^2 \bigr) , \\
\label{eq:cdot}
    \dot c &= 2 C_{112} \dot u^1 + 2 C_{122} \dot u^2 = 2 \bigl(   C_{112} \dot u^1 +   C_{122} \dot u^2 \bigr) .
\end{align}
\end{subequations}
It follows 
\begin{align*}
  \postsup{e}{\star}^\lambda\bigl(D_{\dot u} a\bigr)e_{\lambda'} %
    &= \frac{\sigma_\mu\sigma_{\mu'}}{2\lambda(1+\mu^2)}\sqrt{\frac{1+\mu^2 + \lambda^2\bigl(\frac1{K_2}+\frac{\mu^2}{K_1}\bigr)}{1+\mu'^2 + \lambda'^2\bigl(\frac1{K_2}+\frac{\mu'^2}{K_1}\bigr)}}\bigl( \lambda\mu,\lambda,-1,\mu\bigr)\begin{pmatrix} 0\\0\\-\mu'\dot c-\dot d_2\\\mu'\dot d_1+\dot c \end{pmatrix} \\
    &= \sigma_\mu\sigma_{\mu'}\sqrt{\frac{1+\mu^2 + \lambda^2\bigl(\frac1{K_2}+\frac{\mu^2}{K_1}\bigr)}{1+\mu'^2 + \lambda'^2\bigl(\frac1{K_2}+\frac{\mu'^2}{K_1}\bigr)}}\frac{\mu\mu'\dot d_1+\dot d_2 +(\mu+\mu')\dot c}{2\lambda(1+\mu^2)} .
\end{align*}
As a result, we obtain for $\dot u = e_{\lambda''}$, 
\begin{align}
\nonumber
  \cull{c\lambda{\lambda'}{\lambda''}} &= \cull{c\lambda{\lambda'}{\lambda''}}(u)
    = \postsup{e}{\star}^\lambda(u)\bigl(D_{e_{\lambda''}(u)}a(u)\bigr)e_{\lambda'}(u) \\
\nonumber
    &= \sigma_\mu\sigma_{\mu'}\sigma_{\mu''}\sqrt{2\frac{1+\mu^2 + \lambda^2\bigl(\frac1{K_2}+\frac{\mu^2}{K_1}\bigr)}{\Bigl(1+\mu'^2 + \lambda'^2\bigl(\frac1{K_2}+\frac{\mu'^2}{K_1}\bigr)\Bigr)\Bigl(1+\mu''^2 + \lambda''^2\bigl(\frac1{K_2}+\frac{\mu''^2}{K_1}\bigr)\Bigr)}} \\
\nonumber
    &\phantom{=\ } {} \cdot \frac{\mu\mu'(3C_{111}\mu''+C_{112})+(C_{122}\mu''+3C_{222})+(\mu+\mu')(C_{112}\mu''+C_{122})}{\lambda(1+\mu^2)} \\
\label{eq:cllplpp}
    &= \sigma_\mu\sigma_{\mu'}\sigma_{\mu''}\sqrt{2\frac{1+\mu^2 + \lambda^2\bigl(\frac1{K_2}+\frac{\mu^2}{K_1}\bigr)}{\Bigl(1+\mu'^2 + \lambda'^2\bigl(\frac1{K_2}+\frac{\mu'^2}{K_1}\bigr)\Bigr)\Bigl(1+\mu''^2 + \lambda''^2\bigl(\frac1{K_2}+\frac{\mu''^2}{K_1}\bigr)\Bigr)}} \\
\nonumber
    &\phantom{=\ } {} \cdot \frac{ 3C_{111}\mu\mu'\mu'' + C_{112}(\mu\mu'+\mu\mu''+\mu'\mu'') + C_{122}(\mu+\mu'+\mu'') + 3C_{222} }{\lambda(1+\mu^2)} .
\end{align}
Notice that $\cull{c\lambda{\lambda''}{\lambda'}}=\cull{c\lambda{\lambda'}{\lambda''}}$. Notice also that at least two of the $\mu,\mu',\mu''$ are identical. In particular, we have 
\begin{equation} \label{eq:clll}
  \cull{c\lambda\lambda\lambda} = \frac{3\sigma_\mu^3}{\lambda(1+\mu^2)\sqrt{\frac{1+\mu^2}2 + \frac{\lambda^2}2\bigl(\frac1{K_2}+\frac{\mu^2}{K_1}\bigr)}}\bigl( C_{111}\mu^3 + C_{112}\mu^2 + C_{122}\mu + C_{222} \bigr) .
\end{equation}
As a consequence, we have for 
\begin{align*}
  \lambda&=\pm\sqrt{m+R} \quad: & \cull{c\lambda\lambda\lambda} \stackrel{c\to0}\longrightarrow \frac3{\pm\sqrt{d_1}\sqrt{\frac12+\frac{d_1}{2K_1}}}C_{111} , \\
\intertext{and for}
  \lambda&=\pm\sqrt{m-R} \quad: & \cull{c\lambda\lambda\lambda} \stackrel{c\to0}\longrightarrow \frac3{\pm\sqrt{d_2}\sqrt{\frac12+\frac{d_2}{2K_2}}}C_{222} ,
\end{align*}
which shows that the system $\frac{\partial}{\partial t}u+a\frac{\partial}{\partial x}u=0$ is genuinely non--linear around $u=0$, as soon as $C_{111},C_{222}\neq 0$. This establishes the last statement in Lemma~\ref{lem:theory_applies}, completing the proof.

\begin{remark*}
  If we want to ensure that $\cull{c\lambda\lambda\lambda}<0$ for all sufficiently small $\abs{u}$, we replace $\postsup{e}{\star}^\lambda$ and $e_\lambda$ by $\sigma_\lambda\postsup{e}{\star}^\lambda$ and $\sigma_\lambda e_\lambda$, respectively, where 
  \begin{equation} \label{eq:sigmal}
    \sigma_\lambda = - \sign(C_{111})\sign(\lambda)\delta_{\lambda,\pm\sqrt{m+R}} - \sign(C_{222})\sign(\lambda)\delta_{\lambda,\pm\sqrt{m-R}} \qquad\in\{-1,1\} .
  \end{equation}
  (Note that, by definition, $\sigma_{-\lambda}=-\sigma_\lambda$.) We then have 
  \begin{align}
  \label{eq:cllplpp_signed}
    \cull{c\lambda{\lambda'}{\lambda''}} &= \frac{\sigma_\mu\sigma_{\mu'}\sigma_{\mu''}\sigma_\lambda\sigma_{\lambda'}\sigma_{\lambda''}}{\lambda(1+\mu^2)}\sqrt{2\frac{1+\mu^2 + \lambda^2\bigl(\frac1{K_2}+\frac{\mu^2}{K_1}\bigr)}{\Bigl(1+\mu'^2 + \lambda'^2\bigl(\frac1{K_2}+\frac{\mu'^2}{K_1}\bigr)\Bigr)\Bigl(1+\mu''^2 + \lambda''^2\bigl(\frac1{K_2}+\frac{\mu''^2}{K_1}\bigr)\Bigr)}} \\
  \nonumber
      &\phantom{=\ } {} \cdot \bigl( 3C_{111}\mu\mu'\mu'' + C_{112}(\mu\mu'+\mu\mu''+\mu'\mu'') + C_{122}(\mu+\mu'+\mu'') + 3C_{222} \bigr)
  \end{align}
  and
  \begin{equation} \label{eq:clll_signed}
    \cull{c\lambda\lambda\lambda} = \frac{3\sigma_\mu^3\sigma_\lambda^3}{\lambda(1+\mu^2)\sqrt{\frac{1+\mu^2}2 + \frac{\lambda^2}2\bigl(\frac1{K_2}+\frac{\mu^2}{K_1}\bigr)}}\bigl( C_{111}\mu^3 + C_{112}\mu^2 + C_{122}\mu + C_{222} \bigr) .
  \end{equation}
\end{remark*}

\begin{remark*}
  Observe that, for $u=0$, we have 
  \begin{equation} \label{eqs:absclll_at_zero_minmax}
    \begin{split}
      \min_{\lambda} \lrabs{\cull{c\lambda\lambda\lambda}(0)} &= \min \biggl\{ \frac3{\sqrt{K_1}}\lrabs{C_{111}} \;,\; \frac3{\sqrt{K_2}}\lrabs{C_{222}} \biggr\} , \\
      \max_{\lambda} \lrabs{\cull{c\lambda\lambda\lambda}(0)} &= \max \biggl\{ \frac3{\sqrt{K_1}}\lrabs{C_{111}} \;,\; \frac3{\sqrt{K_2}}\lrabs{C_{222}} \biggr\} .
    \end{split}
  \end{equation}
  In particular, we have $\min_\lambda \lrabs{\cull{c\lambda\lambda\lambda}(0)}=\max_\lambda \lrabs{\cull{c\lambda\lambda\lambda}(0)}$ if and only if 
  \begin{equation*}
    \frac{\lrabs{C_{111}}}{\lrabs{C_{222}}} = \sqrt{\frac{K_1}{K_2}} ,
  \end{equation*}
  which, by \eqref{eq:K1K2}, is strictly larger than one.
\end{remark*}

This finishes the discussion that our theory is applicable to plane electromagnetic waves in a birefringent crystal with no magnetic properties and cubic non--linearity in the energy density, assuming no dispersion. Except for a smallness condition on the field we note that we only need to require $C_{111},C_{222}\neq0$. We proceed by observing in the next two subsections that there exist Riemann invariants in the decoupled case $c\equiv0$, thus simplifying the problem considerably, whereas no such simplification occurs in the case $c\not\equiv0$.

\subsection{\texorpdfstring{On the existence of Riemann invariants in the case $c\equiv0$}{On the existence of Riemann invariants in the case c=0}} \label{subsec:czerocase}

Let us assume that $c\equiv0$ (i.e., we assume that $C_{112}=C_{122}=0$; cf.~\eqref{eq:c}). We already observed that this implies the decoupling of the two polarisations. Now we show that this actually implies the existence of Riemann invariants.

First, notice that $C_{112}=C_{122}=0$ implies that $d_1(u)=d_1(u^1)$ and $d_2(u)=d_2(u^2)$ (cf.~\eqref{eq:d1} and \eqref{eq:d2}). So, indeed, the two decoupled systems do not depend on one another. We can then define 
\begin{equation} \label{eqs:m}
  \left\{ \begin{aligned}
    m^1 = m^1(u^1,u^4) &= u^4 + \int_0^{u^1}\sqrt{d_1(s)}ds , & m^2 = m^2(u^2,u^3) &= -u^3 + \int_0^{u^2}\sqrt{d_2(s)}ds , \\
    m^3 = m^3(u^2,u^3) &= -u^3 - \int_0^{u^2}\sqrt{d_2(s)}ds , & m^4 = m^4(u^1,u^4) &= u^4 - \int_0^{u^1}\sqrt{d_1(s)}ds .
  \end{aligned} \right.
\end{equation}
It is easy to see that $a$ becomes diagonal in these coordinates. Indeed, we have
\begin{equation*}
  dm(u) = \begin{pmatrix}
     \sqrt{d_1(u^1)} &        0         &  0 & 1 \\
           0         &  \sqrt{d_2(u^2)} & -1 & 0 \\
           0         & -\sqrt{d_2(u^2)} & -1 & 0 \\
    -\sqrt{d_1(u^1)} &        0         &  0 & 1 
    \end{pmatrix} ,
\end{equation*}
so that 
\begin{equation*}
  \bigl(dm(u)\bigr)^{-1} = \frac12 \begin{pmatrix}
    \frac1{\sqrt{d_1(u^1)}} &            0            &            0             & -\frac1{\sqrt{d_1(u^1)}} \\
              0             & \frac1{\sqrt{d_2(u^2)}} & -\frac1{\sqrt{d_2(u^2)}} &             0            \\
              0             &           -1            &           -1             &             0            \\
              1             &            0            &            0 		 &             1            
    \end{pmatrix} ,
\end{equation*}
and therefore 
\begin{equation*}
  dm(u)a(u)\bigl(dm(u)\bigr)^{-1} = \begin{pmatrix}
    \sqrt{d_1(u^1)} &       0         &        0         &        0         \\
          0         & \sqrt{d_2(u^2)} &        0         &        0         \\
          0         &       0         & -\sqrt{d_2(u^2)} &        0         \\
          0         &       0         &        0         & -\sqrt{d_1(u^1)} 
    \end{pmatrix} .
\end{equation*}
We thus obtain that $\frac{\partial}{\partial t}u+a(u)\frac{\partial}{\partial x}u=0$ is equivalent to the system
\begin{equation*}
  \left\{ \begin{aligned}
    \frac{\partial}{\partial t} m^1 + \sqrt{d_1(u^1)}\frac{\partial}{\partial x} m^1 &= 0 , \\
    \frac{\partial}{\partial t} m^2 + \sqrt{d_2(u^2)}\frac{\partial}{\partial x} m^2 &= 0 , \\
    \frac{\partial}{\partial t} m^3 - \sqrt{d_2(u^2)}\frac{\partial}{\partial x} m^3 &= 0 , \\
    \frac{\partial}{\partial t} m^4 - \sqrt{d_1(u^1)}\frac{\partial}{\partial x} m^4 &= 0 ,
  \end{aligned} \right.
\end{equation*}
which we write more compactly as 
\begin{equation} \label{eq:m_pde_udependent}
  \frac{\partial}{\partial t} m^{i(1-j)+(5-i)j} + (-1)^j\sqrt{d_i(u^i)}\frac{\partial}{\partial x} m^{i(1-j)+(5-i)j} = 0 , \qquad i\in\{1,2\},j\in\{0,1\} .
\end{equation}

We still need to express $u^1$ and $u^2$ in terms of $m^1$, $m^2$, $m^3$ and $m^4$. From \eqref{eq:d1}, \eqref{eq:d2} we have 
\begin{equation*}
  \sqrt{d_i(u^i)} = \sqrt{K_i+6C_{iii}u^i} , \qquad i\in\{1,2\} ,
\end{equation*}
whence 
\begin{equation} \label{eq:int_sqrt_di}
  \int_0^{u^i}\sqrt{d_i(s)}ds = \left\{ \begin{aligned} 
    &\frac{(K_i+6C_{iii}u^i)^{\frac32}-(K_i)^{\frac32}}{9C_{iii}} , & \text{if $C_{iii}\neq0$,} \\
    &\sqrt{K_i}u^i , & \text{if $C_{iii}=0$,}
  \end{aligned} \right. \qquad i\in\{1,2\} .
\end{equation}
But from \eqref{eqs:m}, we see that this also equals $\frac12(m^i-m^{5-i})$. As a result, we obtain for $i\in\{1,2\}$ 
\begin{equation} \label{eq:ui_in_terms_of_m}
  u^i = u^i(m^i,m^{5-i}) = \left\{ \begin{aligned}
    & \frac{\Bigl((K_i)^{\frac32} + \frac{9C_{iii}}2(m^i-m^{5-i})\Bigr)^{\frac23}-K_i}{6C_{iii}} , & \text{if $C_{iii}\neq0$,} \\
    & \frac1{2\sqrt{K_i}}(m^i-m^{5-i}) , & \text{if $C_{iii}=0$.}
  \end{aligned} \right.
\end{equation}
Inserting this back into \eqref{eq:d1}, \eqref{eq:d2} and then into \eqref{eq:m_pde_udependent}, we obtain for $i\in\{1,2\},j\in\{0,1\}$
\begin{equation*}
  \left\{ \begin{aligned}
    &\frac{\partial}{\partial t} m^{i-2ij+5j} + (-1)^j \biggl( (K_i)^{\frac32} + \frac{9C_{iii}}2(m^i-m^{5-i}) \biggr)^{\frac13} \frac{\partial}{\partial x} m^{i-2ij+5j} = 0 , & \text{if $C_{iii}\neq 0$,} \\ 
    &\frac{\partial}{\partial t} m^{i-2ij+5j} + (-1)^j \sqrt{K_i} \frac{\partial}{\partial x} m^{i-2ij+5j} = 0 , & \text{if $C_{iii}=0$.} 
  \end{aligned} \right.
\end{equation*}
The first equation being compatible with the second, we have for every $C_{iii}\in\rz$, 
\begin{multline} \label{eq:m_pde}
  \frac{\partial}{\partial t} m^{i-2ij+5j}  + (-1)^j \biggl( (K_i)^{\frac32} + \frac{9C_{iii}}2(m^i-m^{5-i}) \biggr)^{\frac13} \frac{\partial}{\partial x} m^{i-2ij+5j} = 0 , \\
  i\in\{1,2\}, j\in\{0,1\} .
\end{multline}
We conclude that $m^{i-2ij+5j}$ is constant along the $(i-2ij+5j)^\text{th}$ characteristic with speed 
\begin{equation} \label{eq:m_char_speeds}
  \lambda_{i-2ij+5j} = (-1)^j \biggl( (K_i)^{\frac32} + \frac{9C_{iii}}2(m^i-m^{5-i}) \biggr)^{\frac13} .
\end{equation}
As a result, the characteristics must be straight lines as soon as we are outside a region where characteristic strips overlap.

Next, we want to investigate whether or not a similar argument can be performed in the case $c\not\equiv0$. It turns out that the answer is negative.

\subsection{\texorpdfstring{On the non--existence of Riemann invariants in the case $c\not\equiv0$}{On the non--existence of Riemann invariants in the case c!=0}} \label{subsec:cnonzerocase}

Notice that, in general, the existence of Riemann invariants for an $N$--dimensional ($N\in\nz\setminus\{0\}$) quasi--linear system 
\begin{equation*}
  \frac{\partial}{\partial t} u + a(u)\frac{\partial}{\partial x} u = 0
\end{equation*}
depends on the existence of coordinates $m=m(u)$ in $\rz[N]$ such that the rows of $dm$ are (each separately) proportional to the elements of a basis $\{\postsup{e}{\star}^i\}_{i=1}^N$ of eigen--$1$--forms of $a$ (cf.~Remark~\ref{rem:geometric_interpretation} on page~\pageref{rem:geometric_interpretation}), i.e., there must exist functions $\mu_i$ such that 
\begin{equation*}
  \mu_i dm^i = \postsup{e}{\star}^i , \qquad i\in\{1,\ldots,N\} .
\end{equation*}
We have the necessary integrability condition 
\begin{equation*}
  d\postsup{e}{\star}^i\wedge \postsup{e}{\star}^i = (d\mu_i\wedge dm^i + \mu_i d^2m^i)\wedge(\mu_i dm^i) = \mu_i d\mu_i\wedge(dm^i\wedge dm^i) = 0 .
\end{equation*}

In order to check this for the case at hand (i.e., $N=4$, $a$ given by \eqref{eq:a} and $c\not\equiv0$), we compute
\begin{align*}
  d\postsup{\widehat e}{\star}^\lambda 
    &=   \bigl( \partial_{u^1}\lambda - \partial_{u^2}(\lambda\mu) \bigr) du^1\wedge du^2 
       + \bigl( -\partial_{u^3}(\lambda\mu) \bigr) du^1\wedge du^3 
       + \bigl( \partial_{u^1}\mu - \partial_{u^4}(\lambda\mu) \bigr) du^1\wedge du^4 \\
    &\phantom{=\ } {} + ( -\partial_{u^3}\lambda ) du^2\wedge du^3 
       + ( \partial_{u^2}\mu - \partial_{u^4}\lambda ) du^2\wedge du^4 
       + ( \partial_{u^3}\mu ) du^3\wedge du^4 ,
\end{align*}
where we take 
\begin{equation*}
\postsup{\widehat e}{\star}^\lambda = \lambda\mu du^1 + \lambda du^2 - du^3 + \mu du^4 
\end{equation*}
for the dual eigenvectors of $a$ (compare with \eqref{eq:estartilde}).
Since, by \eqref{eqs:mumuhat}, \eqref{eq:lambda}, \eqref{eq:R}, \eqref{eqs:mr}, \eqref{eq:d1}, \eqref{eq:d2}, \eqref{eq:c}, the quantities $\lambda$ and $\mu$ are independent of $u^3$ and $u^4$, this reduces to 
\begin{equation*}
  d\postsup{\widehat e}{\star}^\lambda 
    = \bigl( \partial_{u^1}\lambda - \partial_{u^2}(\lambda\mu) \bigr) du^1\wedge du^2 
      + (\partial_{u^1}\mu) du^1\wedge du^4
      + (\partial_{u^2}\mu) du^2\wedge du^4 .
\end{equation*}
Therefore, 
\begin{align*}
  d\postsup{\widehat e}{\star}^\lambda\wedge\postsup{\widehat e}{\star}^\lambda 
    &= \bigl( - \partial_{u^1}\lambda + \partial_{u^2}(\lambda\mu) \bigr) du^1\wedge du^2\wedge du^3 \\
    &\phantom{=\ } {} + \bigl( \mu\partial_{u^1}\lambda - \mu\partial_{u^2}(\lambda\mu) - \lambda\partial_{u^1}\mu + \lambda\mu\partial_{u^2}\mu \bigr) du^1\wedge du^2\wedge du^4 \\
    &\phantom{=\ } {} + (\partial_{u^1}\mu) du^1\wedge du^3\wedge du^4
                  + (\partial_{u^2}\mu) du^2\wedge du^3\wedge du^4 ,
\end{align*}
which has to vanish. Comparing coefficients, we obtain that $\mu$ needs to be constant and that $\lambda$ has to fulfil 
\begin{equation}
  \partial_{u^1}\lambda = \mu\partial_{u^2}\lambda .
\end{equation}
Since, by the first equation in \eqref{eqs:mumuhat_prod_sum}, $\mu$ being independent of $u$ implies that $\widehat\mu$ is independent of $u$, we obtain from the second equation in \eqref{eqs:mumuhat_prod_sum} that $\frac rc$ has to be a constant as well. On the other hand, considering points closer and closer to the trivial state, \eqref{eqs:mupm_leadingorder} implies that $\mu_-\equiv0$ and $\frac1{\mu_+}\equiv0$. Hence, again by \eqref{eqs:mupm_leadingorder}, $\frac cr\equiv0$, implying that $c\equiv0$. But this is a contradiction to our assumption that $c\not\equiv0$. As a result, no Riemann invariants exist in the case $c\not\equiv0$.

In the remaining part of this section, we want to comment on a more realistic setting one might encounter in experiments.

\subsection{A sketch of an actual experimental setup} \label{subsec:experimental_setup}

If one were to set up an experiment for comparison with the theoretical study conducted thus far, one would need to take into account at least a material boundary. In what follows, we shall assume that our crystal extends infinitely in the half--space $\bigl\{(x,y,z)\in\rz[3]\bigm|x\geq0\bigr\}$ and that a suitable plane wave source is located in the plane $\{x=x_0\}$ for some $x_0<0$. For simplicity, we shall assume that the half--space $\bigl\{(x,y,z)\in\rz[3]\bigm|x<0\bigr\}$ is vacuum, although it should be easy to adapt what follows to the case of, say, air. The initial conditions for the electromagnetic field are obtained from a smooth light profile emanating from $x_0$ and compactly supported in time. We will later choose the time origin so that the light first hits the crystal at $(x,t)=(0,0)$.

In order to deal with the material boundary, we obviously have to set up jump conditions at $x=0$. Those are, as usual, obtained from integrating the Maxwell equations over suitable regions (infinitesimally flat pill boxes, respectively loops, based on arbitrary subdomains, respectively curves, on the boundary) and using Gauss' and Stokes' theorems. In general, one obtains for insulators (at rest) without extraneous charges and currents that the normal components of $B$ and $D$, as well as the tangential components of $E$ and $H$, have to be continuous across the boundary. We denote this as 
\begin{equation} \label{eqs:general_jump_conditions}
  \begin{aligned}[]
    [B_\perp] &= 0 , & [D_\perp] &= 0 , \\
    [E_\parallel] &= 0 , & [H_\parallel] &= 0 .
  \end{aligned}
\end{equation}
For plane waves travelling in the $x$--direction, the boundary being $\{x=0\}$ here, the top line has no effect other than ensuring that if we assumed $B_x=0$, $D_x=0$ initially, the same holds on the other side of the boundary. The bottom line, on the other hand, translates into 
\begin{equation} \label{eqs:jump_conditions_E_H}
  [E_y] = 0 , \quad [E_z] = 0 , \quad [H_y] = 0 , \quad [H_z] = 0 .
\end{equation}
These conditions, of course, have to be converted into jump conditions for $D$ and $B$, using $H=B$ on both sides, as well as $E=D$ if $x<0$ and \eqref{eqs:E_in_terms_of_D} if $x\geq 0$. We observe that 
\begin{equation*}
  \frac{\partial (E_y,E_z)}{\partial (D_y,D_z)} = \begin{pmatrix} d_1 & c \\ c & d_2 \end{pmatrix} ,
\end{equation*}
so that $(D_y,D_z)$ can indeed be expressed in terms of $(E_y,E_z)$ in a neighbourhood of the solution $(D_y,D_z)\bigr|_{(E_y,E_z)=(0,0)}=(0,0)$.

We now give the complete solution of this situation in the decoupled case $c\equiv0$, i.e., when $C_{112}=C_{122}=0$.

\subsection{\texorpdfstring{The decoupled case $c\equiv0$}{The decoupled case c=0}} \label{subsec:decoupled_case}

Both inside and outside the crystal, we take \eqref{eq:energy_third_order} as an ansatz for the energy density. In the region outside the crystal, i.e., when $x<0$, the constants $C_{111}$, $C_{112}$, $C_{122}$ and $C_{222}$ all vanish, and $K_1=K_2$. For simplicity, we shall assume that we are in vacuum, so that $K_1=K_2=1$. Inside the crystal, we assume \eqref{eq:K1K2}, $C_{112}=C_{122}=0$ and, for simplicity, we only consider the genuinely non--linear case $C_{111}\neq0$, $C_{222}\neq0$. Let $u=(D_y,D_z,B_y,B_z)$ be a solution of $\frac{\partial}{\partial t}u+a\frac{\partial}{\partial x}u=0$, where $a$ is given by \eqref{eq:a} with the aforementioned values of the constants in accordance as to whether $x$ is positive or negative, and such that the jump conditions \eqref{eqs:jump_conditions_E_H} are fulfilled. In what follows, we describe this solution in terms of Riemann invariants. We can use Subsection~\ref{subsec:czerocase} to do this in both the vacuum and the crystal regions. To avoid additional sub- or superscripts, we shall denote the Riemann invariants inside the crystal by $m$, but those outside the crystal by $n$. Let us start with the vacuum region ($x<0$). Here, since the equations are linear, we expect the solution $u$ to be the sum of an incident wave $\postsub{u}{(i)}$ travelling in the positive $x$--direction, and a reflected wave $\postsub{u}{(r)}$ travelling in the negative $x$--direction, so that $u=\postsub{u}{(i)}+\postsub{u}{(r)}$. Given an initial two--dimensional profile $\mathfrak f\in C^K_c\bigl(\rz;\nball[2]_{\frac\delta2}(0)\bigr)$ for $(D_y,D_z)=(u^1,u^2)$ with compact support in $(x_0-1,x_0)$, we can parametrise the incident wave $\postsub{u}{(i)}$ by 
\begin{equation} \label{eq:u_vacuum_incident}
  \postsub{u}{(i)}(x,t) = \begin{pmatrix} \mathfrak f^1(x_0+x-t) \\ \mathfrak f^2(x_0+x-t) \\ -\mathfrak f^2(x_0+x-t) \\ \mathfrak f^1(x_0+x-t) \end{pmatrix} , \qquad x\in(x_0,0),
\end{equation}
as can be easily checked using \eqref{eqs:Maxwell}. Here, $\delta$ is chosen small enough according to Subsection~\ref{subsec:theory_applies} for the crystal region. Notice that we chose the time origin in such a way that $t\mapsto\lim\limits_{x\nearrow0}\postsub{u}{(i)}(x,t)$ is compactly supported in $(0,1)$. The Riemann invariants $\postsub{n}{(i)}$ related to this wave are (cf.~\eqref{eqs:m})
\begin{equation} \label{eq:m_vacuum_incident}
  \postsub{n}{(i)}(x,t) = \begin{pmatrix} \postsub{u}{(i)}^1 + \postsub{u}{(i)}^4 \\ \postsub{u}{(i)}^2 - \postsub{u}{(i)}^3 \\ -\postsub{u}{(i)}^2 - \postsub{u}{(i)}^3 \\ -\postsub{u}{(i)}^1 + \postsub{u}{(i)}^4 \end{pmatrix} = \begin{pmatrix} 2\mathfrak f^1(x_0+x-t) \\ 2\mathfrak f^2(x_0+x-t) \\ 0 \\ 0 \end{pmatrix} , \qquad x\in(x_0,0),
\end{equation}
which is consistent with $\postsub{u}{(i)}$ travelling in the positive $x$--direction. Similarly, we can make the following ansatz for the Riemann invariants $\postsub{n}{(r)}$ representing the reflected wave $\postsub{u}{(r)}$, 
\begin{equation} \label{eq:m_vacuum_reflected}
  \postsub{n}{(r)}(x,t) = \begin{pmatrix} 0 \\ 0 \\ 2\mathfrak g^2(x,t) \\ 2\mathfrak g^1(x,t) \end{pmatrix} , \qquad x<0,
\end{equation}
for two unknown functions $\mathfrak g^1$ and $\mathfrak g^2$ (notice that, by linearity, $n=\postsub{n}{(i)}+\postsub{n}{(r)}$). Since the latter are constant along the characteristics with speed $-1$, they must depend solely on $x+t$. Slightly absuing notation, we then get for the reflected wave $\postsub{u}{(r)}$ using \eqref{eqs:m}, \eqref{eq:ui_in_terms_of_m}, 
\begin{equation} \label{eq:u_vacuum_reflected}
  \postsub{u}{(r)}(x,t) = \begin{pmatrix} -\mathfrak g^1(x+t) \\ -\mathfrak g^2(x+t) \\ -\mathfrak g^2(x+t) \\ \mathfrak g^1(x+t) \end{pmatrix} , \qquad x<0,
\end{equation}
which is consistent with the Maxwell equations \eqref{eqs:Maxwell}. We shall promptly see that the two unknown functions $\mathfrak g^1$ and $\mathfrak g^2$ are obtained from the jump conditions \eqref{eqs:jump_conditions_E_H}. We are going to need the following expressions for $E_y$, $E_z$, $H_y$, $H_z$ in terms of $n^1$, $n^2$, $n^3$ and $n^4$, which are readily obtained from \eqref{eqs:m}, \eqref{eq:ui_in_terms_of_m}, using that, in vacuum, $E=D$ and $H=B$. We have, for $x<0$, 
\begin{equation} \label{eqs:E_H_in_terms_of_m_vacuum}
  \begin{aligned}
    E_y &= D_y = u^1 = \frac12(n^1-n^4) , &  E_z &= D_z = u^2 = \frac12(n^2-n^3) , \\
    H_y &= B_y = u^3 = -\frac12(n^2+n^3) , &  H_z &= B_z = u^4 = \frac12(n^1+n^4) .
  \end{aligned}
\end{equation}

Regarding the region inside the crystal, we consider the Riemann invariants $m=m(x,t)$ ($x>0$) representing the transmitted wave. Since $m^3$ and $m^4$ are constant along characteristics with negative speed (see \eqref{eq:m_char_speeds}), and assuming that the incident wave from above is the only source, it is clear that they must vanish, as every characteristic $\mathcal C_3$ or $\mathcal C_4$ intersects points in the trivial region. We thus have 
\begin{equation*}
  m(x,t) = \begin{pmatrix} m^1(x,t) \\ m^2(x,t) \\ 0 \\ 0 \end{pmatrix} , \qquad x>0 ,
\end{equation*}
and observe that, by \eqref{eq:m_pde}, \eqref{eq:m_char_speeds}, the characteristics in the crystal region are all straight lines. We are going to use \eqref{eqs:jump_conditions_E_H} to obtain $m^1$ and $m^2$. Before doing so, however, we need to express $m^1$, $m^2$, $m^3$ and $m^4$ in terms of $E_y$, $E_z$, $H_y$ and $H_z$, inside the crystal region $x>0$. We have that 
\begin{equation*}
  u^1 = D_y = \frac{\sqrt{K_1^2+12C_{111}E_y}-K_1}{6C_{111}} \qquad\text{ and }\qquad u^2 = D_z = \frac{\sqrt{K_2^2+12C_{222}E_z}-K_2}{6C_{222}} 
\end{equation*}
are the unique solutions of \eqref{eqs:E_in_terms_of_D} that vanish when $E_y$, respectively $E_z$, vanish. We then obtain from \eqref{eqs:m}, \eqref{eq:int_sqrt_di}, and using $H=B$, that, for $x>0$, 
\begin{equation} \label{eqs:m_in_terms_of_E_H}
  \begin{aligned}
    m^1 &= u^4 + \frac{(K_1+6C_{111}u^1)^{\frac32}-K_1^{\frac32}}{9C_{111}} = H_z + \frac{(K_1^2+12C_{111}E_y)^{\frac34}-K_1^{\frac32}}{9C_{111}} , \\
    m^2 &= -u^3 + \frac{(K_2+6C_{222}u^2)^{\frac32}-K_2^{\frac32}}{9C_{222}} = -H_y + \frac{(K_2^2+12C_{222}E_z)^{\frac34}-K_2^{\frac32}}{9C_{222}} , \\
    m^3 &= -u^3 - \frac{(K_2+6C_{222}u^2)^{\frac32}-K_2^{\frac32}}{9C_{222}} = -H_y - \frac{(K_2^2+12C_{222}E_z)^{\frac34}-K_2^{\frac32}}{9C_{222}} , \\
    m^4 &= u^4 - \frac{(K_1+6C_{111}u^1)^{\frac32}-K_1^{\frac32}}{9C_{111}} = H_z - \frac{(K_1^2+12C_{111}E_y)^{\frac34}-K_1^{\frac32}}{9C_{111}} .
  \end{aligned}
\end{equation}
We are now ready to use \eqref{eqs:jump_conditions_E_H} to solve for $m^1$ and $m^2$, as well as for $n^3=\postsub{n}{(r)}^3=2\mathfrak g^2$ and $n^4=\postsub{n}{(r)}^4=2\mathfrak g^1$ from above.

For $i\in\{1,\ldots,4\}$, let 
\begin{equation*}
  n_0(t) = \lim_{x\nearrow0} n(x,t) = \begin{pmatrix} 2\mathfrak f^1(x_0-t) \\ 2\mathfrak f^2(x_0-t) \\ 2\mathfrak g^2(t) \\ 2\mathfrak g^1(t) \end{pmatrix} , \qquad m_0(t) = \lim_{x\searrow0} m(x,t) = \begin{pmatrix} \lim\limits_{x\searrow0}m^1(x,t) \\ \lim\limits_{x\searrow0}m^2(x,t) \\ 0 \\ 0 \end{pmatrix} .
\end{equation*}
Using \eqref{eqs:E_H_in_terms_of_m_vacuum}, \eqref{eqs:m_in_terms_of_E_H} and \eqref{eqs:jump_conditions_E_H}, we obtain 
\begin{equation*}
  \begin{aligned}
    m_0^1 &= \frac{n_0^1+n_0^4}2 + \frac{\bigl(K_1^2+12C_{111}\frac{n_0^1-n_0^4}2\bigr)^{\frac34}-K_1^{\frac32}}{9C_{111}} , \\
    m_0^2 &= \frac{n_0^2+n_0^3}2 + \frac{\bigl(K_2^2+12C_{222}\frac{n_0^2-n_0^3}2\bigr)^{\frac34}-K_2^{\frac32}}{9C_{222}} , \\
    m_0^3 &= \frac{n_0^2+n_0^3}2 - \frac{\bigl(K_2^2+12C_{222}\frac{n_0^2-n_0^3}2\bigr)^{\frac34}-K_2^{\frac32}}{9C_{222}} , \\
    m_0^4 &= \frac{n_0^1+n_0^4}2 - \frac{\bigl(K_1^2+12C_{111}\frac{n_0^1-n_0^4}2\bigr)^{\frac34}-K_1^{\frac32}}{9C_{111}} .
  \end{aligned}
\end{equation*}
Using \eqref{eq:m_vacuum_incident}, \eqref{eq:m_vacuum_reflected}, together with $m^3\equiv 0$ and $m^4\equiv 0$, this reads 
\begin{equation*}
  \begin{aligned}
    m_0^1(t) &= \widetilde{\mathfrak f}^1(t) + \mathfrak g^1(t) + \frac{\Bigl(K_1^2+12C_{111}\bigl( \widetilde{\mathfrak f}^1(t) - \mathfrak g^1(t) \bigr)\Bigr)^{\frac34}-K_1^{\frac32}}{9C_{111}} , \\
    m_0^2(t) &= \widetilde{\mathfrak f}^2(t) + \mathfrak g^2(t) + \frac{\Bigl(K_2^2+12C_{222}\bigl( \widetilde{\mathfrak f}^2(t) - \mathfrak g^2(t) \bigr)\Bigr)^{\frac34}-K_2^{\frac32}}{9C_{222}} , \\
    0        &= \widetilde{\mathfrak f}^2(t) + \mathfrak g^2(t) - \frac{\Bigl(K_2^2+12C_{222}\bigl( \widetilde{\mathfrak f}^2(t) - \mathfrak g^2(t) \bigr)\Bigr)^{\frac34}-K_2^{\frac32}}{9C_{222}} , \\
    0        &= \widetilde{\mathfrak f}^1(t) + \mathfrak g^1(t) - \frac{\Bigl(K_1^2+12C_{111}\bigl( \widetilde{\mathfrak f}^1(t) - \mathfrak g^1(t) \bigr)\Bigr)^{\frac34}-K_1^{\frac32}}{9C_{111}} ,
  \end{aligned}
\end{equation*}
where 
\begin{equation} \label{eq:widetildemathfrakf}
  \widetilde{\mathfrak f}(t)=\mathfrak f(x_0-t) .
\end{equation}
Inserting the last two lines into the first two, we obtain the following system for $m_0^1$, $m_0^2$, $\mathfrak g^1$ and $\mathfrak g^2$, 
\begin{equation} \label{eqs:decoupled_case_system}
  \begin{aligned}
    m_0^i &= 2\widetilde{\mathfrak f}^i + 2\mathfrak g^i , \\
    K_i^{\frac32}+9C_{iii}(\widetilde{\mathfrak f}^i+\mathfrak g^i) &= \bigl(K_i^2+12C_{iii}(\widetilde{\mathfrak f}^i-\mathfrak g^i)\bigr)^{\frac34} ,
  \end{aligned} \qquad i\in\{1,2\} .
\end{equation}
In order to solve, for each $i\in\{1,2\}$, the second equation, we set 
\begin{equation*}
  Z_i = \bigl(K_i^2+12C_{iii}(\widetilde{\mathfrak f}^i-\mathfrak g^i)\bigr)^{\frac14} , 
\end{equation*}
and notice that $Z_i$ solves 
\begin{align*}
  3Z_i^4+4Z_i^3 &= 3\bigl(K_i^2+12C_{iii}(\widetilde{\mathfrak f}^i-\mathfrak g^i)\bigr) + 4\bigl(K_i^{\frac32}+9C_{iii}(\widetilde{\mathfrak f}^i+\mathfrak g^i)\bigr) \\
    &= 3K_i^2+4K_i^{\frac32}+72C_{iii}\widetilde{\mathfrak f}^i ,
\end{align*}
which is independent of $\mathfrak g^i$. Denoting the right--hand side by $\mathfrak c_i$, i.e., 
\begin{equation} \label{eq:mathfrak_ci}
  \mathfrak c_i = 3K_i^2+4K_i^{\frac32}+72C_{iii}\widetilde{\mathfrak f}^i ,
\end{equation}
we see from the second equation in \eqref{eqs:decoupled_case_system} that 
\begin{equation} \label{eq:mathfrakgi_solution}
  \mathfrak g^i = \frac{ Z_{i,0}^3 - K_i^{\frac32} }{9C_{iii}} - \widetilde{\mathfrak f}^i ,
\end{equation}
where $Z_{i,0}$ is the unique positive solution of $3Z^4+4Z^3=\mathfrak c_i$, which exists for any given $\mathfrak c_i>0$ (i.e., for small enough $\abs{\widetilde{\mathfrak f}^i}$), since $3Z^4+4Z^3$ is an increasing function of $Z$ for positive $Z$. Notice that we have $Z_{i,0}=\sqrt{K_i}$ when $\widetilde{\mathfrak f}^i=0$. %
Using $u=\postsub{u}{(i)}+\postsub{u}{(r)}$ when $x<0$, where $\postsub{u}{(i)}$ and $\postsub{u}{(r)}$ are given by \eqref{eq:u_vacuum_incident} and \eqref{eq:u_vacuum_reflected}, respectively, we have thus obtained an explicit solution in the vacuum region in terms of the initial profile $\mathfrak f$. Regarding then the crystal region, we obtain $m^i(x,t)$, $i\in\{1,2\}$, $x>0$, from $m^i(x,t)=m_0^i(t_0)$, where $m_0^i$ is given by the first line of \eqref{eqs:decoupled_case_system}, inserting the solution \eqref{eq:mathfrakgi_solution}, and $t_0$ is given implicitly, owing to \eqref{eq:m_char_speeds}, by the equation 
\begin{equation} \label{eq:widehatt_implicit}
  x = \Bigl( K_i^{\frac32}+\frac{9C_{iii}}2m_0^i(t_0) \Bigr)^{\frac13}(t-t_0) .
\end{equation}
This concludes the discussion of the decoupled case.

\subsection{\texorpdfstring{A few words about the general case $c\not\equiv0$}{A few words about the general case c!=0}} \label{subsec:general_case}

We conclude this section with a few remarks on the generic case, where $c\neq0$ and no Riemann invariants can be found. Obviously, the vacuum region can be treated in a similar way as in the decoupled case, the incident wave being represented by \eqref{eq:u_vacuum_incident}, and the ansatz \eqref{eq:m_vacuum_reflected} for the reflected wave remaining valid, and leading to \eqref{eq:u_vacuum_reflected}. Also, \eqref{eqs:E_H_in_terms_of_m_vacuum} are to be used when applying the jump conditions \eqref{eqs:jump_conditions_E_H}. However, the crystal region cannot be dealt with as easily. Having no Riemann invariants at our disposal, we have to appeal to the evolution equations along the characteristics of the components $w^i=\postsup{e}{\star}^i\frac{\partial}{\partial x}u$, $i\in\{1,2,3,4\}$, of the spatial derivative of the solution $u$, as in the main body of the paper. Note from \eqref{eq:lambda} that 
\begin{equation*}
  \lambda_1 = \sqrt{m+R} ,\ \lambda_2 = \sqrt{m-R} ,\ \lambda_3 = -\sqrt{m-R} ,\ \lambda_4 = -\sqrt{m+R} ,
\end{equation*}
so $\lambda_1,\lambda_2$ are positive and bounded away from zero, while $\lambda_3,\lambda_4$ are negative and, again, bounded away from zero. The solution is trivial for $x>K_1t$, the line $x=K_1t$ being a $\mathcal C_1$ characteristic in the crystal. A $\mathcal C_1$ or a $\mathcal C_2$ characteristic originating at a point in the non--trivial region $x<K_1t$ in the crystal intersects in the past the material boundary $\{x=0\}$. Consequently, $w^1$ and $w^2$ must be given boundary conditions on $\{x=0\}$. On the other hand, a $\mathcal C_3$ or a $\mathcal C_4$ characteristic originating at a point in the non--trivial region intersects in the past the line $x=K_1t$ which constitutes the boundary of the trivial region in the crystal. As a consequence, $w^3$ and $w^4$ satisfy trivial initial conditions on $x=K_1t$. To deduce the boundary conditions for $w^1$ and $w^2$ on the material boundary, we impose the jump conditions \eqref{eqs:jump_conditions_E_H}, which, since the vectorfield $\frac{\partial}{\partial t}$ is tangential to $\{x=0\}$, imply similar conditions for the time derivatives: 
\begin{equation*}
  \Bigl[\frac{\partial}{\partial t}E_y\Bigr] = 0 , \quad \Bigl[\frac{\partial}{\partial t}E_z\Bigr] = 0 , \quad \Bigl[\frac{\partial}{\partial t}H_y\Bigr] = 0 , \quad \Bigl[\frac{\partial}{\partial t}H_z\Bigr] = 0 .
\end{equation*}
In applying the first two of these, not only must we solve the system \eqref{eqs:E_in_terms_of_D} around the solution 
\begin{equation*}
  (D_y,D_z)\bigr|_{(E_y,E_z)=(0,0)} = (0,0) ,
\end{equation*}
but also its derivative with respect to $t$, namely the linear system 
\begin{equation*}
  \left\{ \begin{aligned}
    \frac{\partial}{\partial t}E_y &= K_1\frac{\partial}{\partial t}D_y + 6C_{111}D_y\frac{\partial}{\partial t}D_y + 2C_{112}\Bigl( D_z\frac{\partial}{\partial t}D_y + D_y\frac{\partial}{\partial t}D_z \Bigr) + 2C_{122}D_z\frac{\partial}{\partial t}D_z , \\
    \frac{\partial}{\partial t}E_z &= K_2\frac{\partial}{\partial t}D_z + 6C_{222}D_z\frac{\partial}{\partial t}D_z + 2C_{122}\Bigl( D_y\frac{\partial}{\partial t}D_z + D_z\frac{\partial}{\partial t}D_y \Bigr) + 2C_{112}D_y\frac{\partial}{\partial t}D_y .
  \end{aligned} \right.
\end{equation*}
As in the decoupled case, the jump conditions not only provide the boundary conditions for $w^1$ and $w^2$, but also give the boundary conditions for the two components ($\mathfrak g^1$ and $\mathfrak g^2$) of the reflected wave. Here, we use the fact that, by virtue of the equation \eqref{eq:theory_applies_main_pde}, the components $\postsup{e}{\star}^i\frac{\partial}{\partial t}u$ of the time derivative of the solution $u$ are given by:
\begin{equation*}
  \postsup{e}{\star}^i\frac{\partial}{\partial t}u = - \lambda_i w^i.
\end{equation*}
A key difference from the decoupled situation, aside from the characteristics inside the crystal not being necessarily straight lines, is that we expect the reflected wave to extend to all $t>0$ at the crystal boundary (hence to all $t>-x$ in the vacuum region). This is due to the fact that, inside the crystal, $w^3$ and $w^4$ will be non--zero throughout the non--trivial region $x<K_1t$, generating non--zero boundary conditions at the material boundary for the reflected wave for all $t>0$. This is because (see equations \eqref{eq:dsiwi_fully_expanded} of Subsection~\ref{subsec:1storder_evoleqs}) the equations for $\frac{\partial}{\partial s_3}w^3$ and $\frac{\partial}{\partial s_4}w^4$ contain, respectively, the terms $2\cull{\gamma 312}w^1w^2$ and $2\cull{\gamma 412}w^1w^2$ which are inhomogeneous from the perspective of the variables $(w^3,w^4)$.

\bibliographystyle{amsalpha}
\bibliography{fosepwnlc}

\end{document}